\providecommand{\abs}[1]{\left|#1\right|}
\providecommand{\norm}[1]{\left \| #1\right \|}
\newcommand{\I}{\mathcal{I}}
\newcommand{\R}{\mathbb{R}}
\newcommand{\N}{\mathbb{N}}
\numberwithin{equation}{section}
\theoremstyle{plain}
\newtheorem{theorem}{Theorem}[section]
\theoremstyle{plain}
\newtheorem{definition}[theorem]{Definition}
\theoremstyle{plain}
\newtheorem{lemma}[theorem]{Lemma}
\theoremstyle{plain}
\newtheorem{corollary}[theorem]{Corollary}
\theoremstyle{plain}
\newtheorem{proposition}[theorem]{Proposition}
\theoremstyle{remark}
\newtheorem{remark}[theorem]{Remark}
\title[]{Maximum principles and related problems for a class of nonlocal extremal operators}
\date{}
\author{Isabeau Birindelli, Giulio Galise, Delia Schiera}
\address{Dipartimento di Matematica Guido Castelnuovo, Sapienza
Universit\`a di Roma, Piazzale Aldo Moro 5, 00185, Roma, Italy.}
\email[I. Birindelli]{isabeau@mat.uniroma1.it}
\email[G. Galise]{galise@mat.uniroma1.it}
\email[D. Schiera]{delia.schiera@uniroma1.it}
\subjclass[2010]{35J60, 35J70, 35R11, 47G10, 35B51, 35D40. }
\keywords{Maximum and comparison principles; Fully nonlinear degenerate elliptic PDE; Nonlocal operators; Eigenvalue problem.}
\thanks{The authors were partially supported by INdAM-GNAMPA}
\begin{document}
\maketitle

 \begin{abstract}We study the validity of the comparison and maximum principles, and their relation with principal eigenvalues, for a class of degenerate nonlinear  operators that are extremal among operators with one dimensional fractional diffusion.\end{abstract}

\section{Introduction}
The fractional Laplacian is a singular integral operator defined e.g. by
$$(-\Delta)^su(x):=-\frac12 C_{N,s} \int_{\R^N} \frac{\delta(u, x, y)}{|y|^{N+2s}} \, dy$$
with $s \in (0,1)$ and 
$$
\delta(u, x, y)= u(x+y)+ u(x-y)-2u(x), $$
so that the value of $(-\Delta)^su$ at $x$ depends on the value of $u$ in the whole of $\R^N$. But, of course, it is possible to define singular integral operators that depend only on subdimensional sets of $\R^N$. For example, one can consider 1-dimensional sets, fixing a direction $\xi  \in \R^N$ and letting
$$ \mathcal{I}_\xi u(x) :=C_s \int_{0}^{+\infty} \frac{\delta(u, x, \tau \xi)}{\tau^{1+2s}} \, d\tau. $$
Here $C_s=C_{1,s}$ so that  $ \mathcal{I}_\xi u(x)$ acts as the $2s$-fractional derivative of $u$ in  the direction $\xi$.
Hence, we can  denote $\mathcal{V}_k$ the family of $k$-dimensional orthonormal sets in $\R^N$ and define
the following nonlocal nonlinear operators 
\[ \I_k^+ u(x) := \sup \left \{ \sum_{i=1}^k \mathcal{I}_{\xi_i} u(x) \colon \{ \xi_i \}_{i=1}^k \in \mathcal{V}_k \right \} \]
\[ \I_k^- u(x) := \inf \left \{ \sum_{i=1}^k \mathcal{I}_{\xi_i} u(x) \colon \{ \xi_i \}_{i=1}^k \in \mathcal{V}_k \right \}. \]
These operators have been very recently considered in \cite{BGT}, where representation formulas were given, and  
in \cite{DelPezzoQuaasRossi}, where the operators $\I_1^\pm$ are shown to be related with a notion of fractional convexity. 
These extremal operators, even for $k=N$, are intrinsically different from the fractional Laplacian and we will show some new phenomena arising. 
We concentrate in particular on exterior Dirichlet problems in bounded domains.

Precisely, for $\Omega$ a bounded domain of $\R^N$, we will study:
\begin{equation}\label{07eq}
\left\{\begin{array}{cl}
\I^\pm_ku(x)+c(x)u(x)=f(x) &  \text{in $\Omega$}\\
u=0 & \text{in $\R^N\backslash\Omega$}.
\end{array}\right.
\end{equation}

The first difference we wish to emphasize is that in general these operators are not continuous, precisely, even if $u$ is in $C^\infty(\Omega)$ and bounded, $\I_k^\pm u(\cdot)$ may not be continuous. What is required in order to have continuity, or lower or upper semicontinuity,  is a global condition on the regularity of $u$;  this will be shown in Proposition \ref{semicont}. 
This is a striking difference with respect to the case of nonlinear integro-differential operators like e.g.~the ones considered in \cite{CaffarelliSilvestre}, which are continuous once $C^{1, 1}$ regularity holds in the domain $\Omega$. 
These continuity properties play a key role in the arguments used for the proofs of the comparison principle, Alexandrov-Bakelman-Pucci estimate, and the Harnack inequality, showing that the setting we are interested in deviates in a substantial way from \cite{CaffarelliSilvestre}. 

Nevertheless, we will show that the comparison principle still holds for $\I_k^\pm$ in any bounded domain;
we recall that a comparison principle for $\I_1^\pm$ was also proved in \cite{DelPezzoQuaasRossi}, but under the assumption that the domain 
is strictly convex. We wish to remark that in fact the comparison principle here is very simple compared to the local case. As it is well known, in the theory of viscosity solutions
the comparison principle for second order operators requires the Jensen-Ishii's lemma, see \cite{CIL}, which in turn lies on a remarkably complex proof that uses tools from convex analysis. 
Here, instead, the proof is completely self contained and uses only a straightforward calculation, somehow more similar to the case of first order local equations, where just the doubling variable technique is used.

Via an adaptation of the Perron's method by \cite{CIL}, the comparison principle allows to prove existence of solutions
for \eqref{07eq}. Let us mention that existence in a very general setting that includes elliptic integro-differential operators was proved in \cite{BCI, BarlesImbert}.
However the approach we use is quite immediate, and it seemed to us simpler and friendlier to the reader to just give the proof then checking if we fit into the general Barles-Chasseigne-Imbert setting.

We conclude with  the proof of H\"older estimates for $\I_1^\pm$ in uniformly convex domains and the validity of maximum principle for the operators
\[ \I_k^\pm\cdot +\mu \cdot \]
with $\mu$ below the generalized principal eigenvalues, which, adapting the classical definition in \cite{BNV}, we set as
$$ \mu_k^\pm = \sup \{ \mu \colon \exists v \in LSC(\Omega)\cap L^\infty(\R^N), v>0 \text{ in } \Omega, v \ge 0 \text{ in } \R^N, \I_k^\pm v + \mu v \le 0 \text{ in } \Omega \}. $$

Let us mention that with our choice of the constant $C_s$, the operators $\I_k^\pm$ converge to the operators $\mathcal{P}_k^\pm$, the so called truncated Laplacians, defined by
\[  \mathcal{P}^+_k (D^2u)(x):= \sum_{i=N-k+1}^{N} \lambda_i(D^2 u(x)) = \max \left\{ \sum_{i=1}^k \langle D^2 u(x) \xi_i, \xi_i \rangle \, \colon \, \{ \xi_i\}_{i=1}^k \in \mathcal{V}_k \right\} \]
and
\[  \mathcal{P}^-_k (D^2u)(x):= \sum_{i=1}^{k} \lambda_i(D^2 u(x)) = \min \left\{ \sum_{i=1}^k \langle D^2 u(x) \xi_i, \xi_i \rangle \, \colon \, \{ \xi_i\}_{i=1}^k \in \mathcal{V}_k \right\}, \]
where  $\lambda_i(D^2 u)$ are the eigenvalues of $D^2u$ arranged in nondecreasing order, see \cite{CaffarelliLiNirenberg, HarveyLawson, BGI1, BGI}. 
Of course there are other classes of nonlocal operators that approximate  $\mathcal{P}^\pm_k (D^2 u)(x)$, as can be seen in \cite{BGT}. But we have concentrated on those that are somehow more of a novelty.

In general we wish to emphasize that in this setting we have differences both with the local equivalent operators and with more standard nonlocal operators. We have already seen that they are in general not continuous, also it is immediate that even when $k=N$, which in the local case gives  $\mathcal{P}^+_N (D^2u)(x)=\mathcal{P}^-_N (D^2u)(x)=\Delta u$, it is not true that $\I_N^-$ is equal to $\I_N^+$ or that it is equal to the fractional Laplacian. But there are other differences, for example regarding the validity of the strong maximum principle, see Theorem \ref{SMP}, or the fact that its validity depends also on the positivity of the solution outside the domain, see Proposition \ref{prop:strong}, or regarding the fact that for $\mathcal{P}^\pm_k$ the  supremum (infimum) among all possible $k$-dimensional frames is in fact a maximum (minimum), while here the extremum may not be reached as it is shown in the examples before Proposition \ref{semicont}. Hence we encourage the reader to pursue her reading in order to see all these fascinating differences.

\vspace{0.5cm}

\noindent This paper is organized as follows. 

After a preliminary section,  in Section \ref{sec:continuity} we study continuity properties of $\I_k^\pm$. We will first give counterexamples showing that in general these operators are not continuous, and then we prove that they preserve upper (or lower) semicontinuity under some global assumptions. As a related result, we also show that the supremum and the infimum in the definitions of $\I_k^\pm$ are in general not attained. 

Section \ref{sec:comparison} is devoted to the proof of the comparison principle. We investigate the validity and the failure of strong maximum/minimum principles for these operators. Moreover, we prove a Hopf-type lemma for $\I_N^-$ and $\I_k^+$. 

In Section \ref{sec:Perron} we exploit the uniform convexity of the domain $\Omega$ to construct first barrier functions, then solutions for the Dirichlet problem by using the Perron's method \cite{CIL}.

Section \ref{sec:maximum} is devoted to the analysis of validity of the  maximum principle for $\I_k^\pm\cdot +\mu \cdot$, and to the relation with principal eigenvalues. 

Finally, H\"older estimates for solutions of 
$\I_1^\pm u= f$ in $\Omega$, $u=0$ in $\R^N \setminus \Omega$, where $\Omega$ is a uniformly convex domain, are proved in Section \ref{sec:holder}. 

We will use them in Section \ref{sec:existence} to prove existence of a positive principal eigenfunction.  

\section*{Notations}
\renewcommand{\arraystretch}{1.3}
\begin{tabular}{cp{0.8\textwidth}}
  $B_r(x)$ & ball centered in $x$ of radius $r$ \\
  $\mathcal{S}^{N-1}$ & unitary sphere in $\R^{N}$\\
  $\{ e_i\}_{i=1}^N$ & canonical basis of $\R^N$\\
  $d(x)$& $= \inf_{y \in \partial \Omega} \abs{x-y}$, the distance function from $x \in \Omega$ to $\partial \Omega$ \\
  $LSC(\Omega)$ & space of lower semi continuous functions on $\Omega$ \\
  $USC(\Omega)$ & space of upper semi continuous functions on $\Omega$ \\
  $\delta(u, x, y)$&$= u(x+y)+ u(x-y)-2u(x)$  \\
    $\mathcal{I}_\xi u(x)$& $=C_s \int_{0}^{+\infty} \frac{\delta(u, x, \tau \xi)}{\tau^{1+2s}} \, d\tau$, where $\xi \in \mathcal{S}^{N-1}$ and $C_s$ is a normalizing constant \\
  $\hat x$ & $=\frac{x}{\abs{x}}$ \\
   $\beta(a, b)$ & $=\int_0^1 t^{-b} (1-t)^{-a} \, dt$ \\
   $\mathcal{V}_k$ & the family of $k$-dimensional orthonormal sets in $\R^N$
\end{tabular}

\section{Preliminaries}\label{sec:prelim}
We recall the definition of viscosity solution in this nonlocal context \cite{BCI, BarlesImbert}. For definitions and main properties of viscosity solutions in the classical local framework we refer to the survey \cite{CIL}. 
\begin{definition}  
Given a function $f \in C(\Omega \times \R)$, we say that $u \in L^\infty(\R^N) \cap LSC(\Omega) $ (respectively $USC(\Omega)$) is a (viscosity) supersolution (respectively subsolution) to 
\begin{equation}\label{eq def sol} \I_k^+ u +f(x, u(x)) = 0 \text{ in } \Omega \end{equation}
if for every point $x_0 \in \Omega$ and every function $\varphi \in C^2(B_\rho(x_0))$, $\rho >0$, such that $x_0$ is a minimum (resp. maximum) point to $u - \varphi$, then 
\begin{equation}\label{cond super} \mathcal{I}(u, \varphi, x_0, \rho) +f(x_0, u(x_0)) \le 0 \quad \text{(resp. $\ge 0$)}\end{equation}
where 
\[ \mathcal{I}(u, \varphi, x_0, \rho) = C_s \sup_{\{\xi_i \} \in \mathcal{V}_k} \sum_{i=1}^k  \left (\int_{0}^\rho \frac{\delta(\varphi, x_0, \tau \xi_i)}{\tau^{1+2s}} \, d\tau  + \int_{\rho}^{+\infty} \frac{\delta(u, x_0, \tau \xi_i)}{\tau^{1+2s}} \, d\tau \right). \]
 We say that a continuous function $u$ is a  solution of \eqref{eq def sol} if it is both a supersolution and a subsolution of \eqref{eq def sol}.
We analogously define viscosity sub/super solutions for the operator $\I_k^-$, taking the infimum  over $\mathcal{V}_k$  in place of the supremum. 
\end{definition}

\begin{remark}
We stress that the definition above is derived from $-(-\Delta)^s$, that means, a minus sign in front of the operator is taken into account. 
\end{remark}

\begin{remark}\label{test}
In the definition of supersolution above we can assume without loss of generality that $u > \varphi$ in $B_\rho(x_0) \setminus \{ x_0 \}$, and $\varphi (x_0) = u(x_0)$. Indeed, let us assume that for any such $\varphi$ 
\[ C_s \sup_{\{\xi_i \} \in \mathcal{V}_k} \sum_{i=1}^k  \left (\int_{0}^\rho \frac{\delta(\varphi, x_0, \tau \xi_i)}{\tau^{1+2s}} \, d\tau  + \int_{\rho}^{+\infty} \frac{\delta(u, x_0, \tau \xi_i)}{\tau^{1+2s}} \, d\tau \right) + f(x_0, u(x_0)) \le 0 \]
is satisfied, and consider a general $\tilde \varphi \in C^2(B_\rho(x_0))$ such that $u-\tilde \varphi$ has a minimum in $x_0$. 
We take for any $n \in \N$ 
\[ \varphi_n(x)=\tilde \varphi(x) + u(x_0) - \tilde \varphi(x_0) - \frac 1n \abs{x-x_0}^2, \]
and notice that $u(x_0)=\varphi_n(x_0)$, and since $u(x_0) - \tilde \varphi(x_0) \le u(x)-\tilde \varphi(x)$, 
\[ \varphi_n(x) \le u(x) - \frac 1n \abs{x-x_0}^2 < u(x) \]
for any $x \in B_\rho(x_0) \setminus \{ x_0 \}$. Also, for any $n \in \N$, 
\begin{multline*}
C_s \sup_{\{\xi_i \} \in \mathcal{V}_k} \sum_{i=1}^k  \left (\int_{0}^\rho \frac{\delta(\tilde \varphi, x_0, \tau \xi_i)}{\tau^{1+2s}} \, d\tau  + \int_{\rho}^{+\infty} \frac{\delta(u, x_0, \tau \xi_i)}{\tau^{1+2s}} \, d\tau\right) \\+ f(x_0, u(x_0)) \le C_s \frac{k \rho^{2-2s}}{n (1-s)},\end{multline*}
and the conclusion follows taking  the limit $n \to \infty$. 
\end{remark}
\begin{remark}
We point out that if we verify \eqref{cond super} for $\rho_1$, then it is also verified for any $\rho_2 > \rho_1$, since
\[ \mathcal{I}(u, \varphi, x_0, \rho_2)  \le \mathcal{I}(u, \varphi, x_0, \rho_1). \]
\end{remark}
\begin{remark}
The operators $\I_k^\pm$ satisfy the following ellipticity condition: if $\psi_1, \psi_2 \in C^2(B_\rho(x_0)) \cap L^{\infty}(\R^N)$ for some $\rho>0$ are such that $\psi_1-\psi_2$ has a maximum in $x_0$, then 
\[ \I_k^\pm \psi_1 (x_0) \le \I_k^\pm \psi_2 (x_0). \]
Indeed, if $\psi_1(x_0)-\psi_2(x_0) \ge \psi_1(x)-\psi_2(x)$, for all $x \in B_\rho(x_0)$ then 
\[ \delta(\psi_1, x_0, \tau \xi_i) \le \delta(\psi_2, x_0, \tau \xi_i) \]
which yields the conclusion. 
\end{remark}
\begin{remark}
Notice that in the definition above we assumed $u \in L^\infty(\R^N)$, as this will be enough for our purposes, however, one can also consider unbounded functions $u$ with a suitable growth condition at infinity, see \cite{BGT}. 
\end{remark}

\section{Continuity}\label{sec:continuity}
In this section  we study continuity properties of the maps $x \mapsto \I_k^\pm u(x)$. We start by showing that the assumption $u \in C^2(\Omega) \cap L^\infty(\R^N)$ which ensures that $\I_k^\pm u(x)$ is well defined, is in fact not enough to guarantee the continuity of $\I_k^\pm u(x)$ with respect to $x$. 
What is needed is a more global assumption as it will be shown later. 

Let $u$ be the function defined as follows:
\begin{equation}\label{eq5}
u(x)=\left\{\begin{array}{rl}
0 &  \text{if $|x|\leq1$ or $\langle x, e_N \rangle \leq0$}\\
-1 & \text{otherwise.}
\end{array}\right.
\end{equation}
Set $\Omega=B_1(0)$. The map
$$
x\in\Omega\mapsto\I^+_ku(x)
$$
is well defined, since $u$ is bounded in $\R^N$ and smooth (in fact constant) in $\Omega$. We shall prove that it is not continuous at $x=0$ when $k<N$.

Let us first compute the value $\I_k^+u(0)$. Since $u\leq0$ in $\R^N$ it turns out that for any $|\xi|=1$ 
\begin{equation*}
\I_\xi u(0)=C_s  \int_0^{+\infty}\frac{u(\tau\xi)+u(-\tau\xi)}{\tau^{1+2s}}\,d\tau\leq0.
\end{equation*}
Hence
\begin{equation}\label{eq1}
\sup_{\left\{\xi_i\right\}_{i=1}^k\in{\mathcal V}_k}\sum_{i=1}^k\I_{\xi_i} u(0)\leq0.
\end{equation}
On the other hand, choosing the first $k$-unit vectors $e_1,\ldots,e_k$ of the standard basis, we obtain that 
\begin{equation}\label{eq2}
\I_{e_1}u(0)=\ldots=\I_{e_k}u(0)=0.
 \end{equation}
Hence by \eqref{eq1}-\eqref{eq2} 
\[
\I^+_ku(0)=0.
 \]
 
 \begin{figure}
\centering
\includegraphics[width=0.7\textwidth]{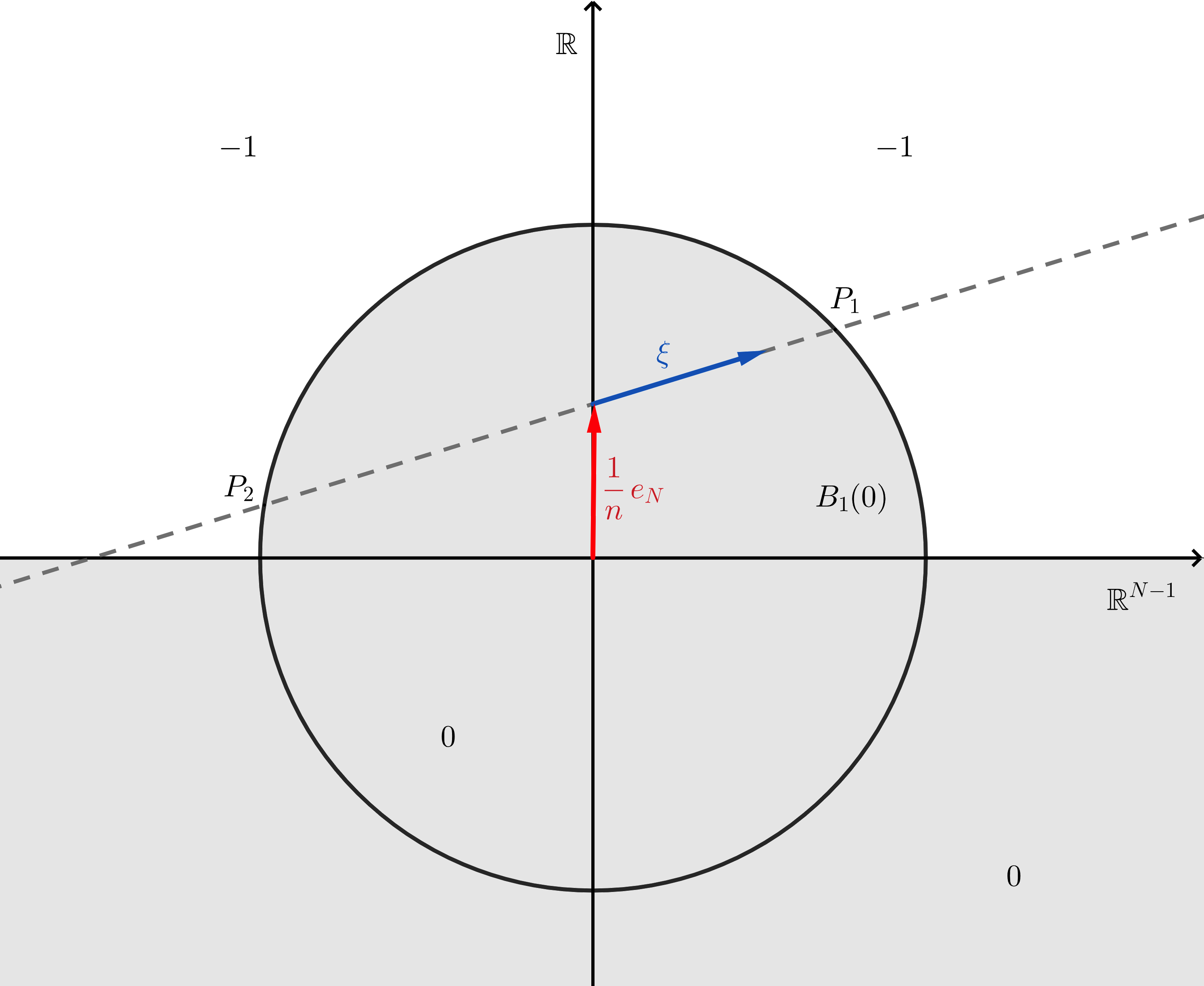}
\caption{We represent with $P_1$ the point $\frac1n e_N + \tau_1(n) \xi$, whereas $P_2=\frac1n e_N - \tau_2(n) \xi$.}
\label{cont}
\end{figure}

Now we are going to prove that 
$$
\limsup_{n\to+\infty}\I^+_ku\left(\frac1ne_N\right)<0
$$
where $e_N=(0,\ldots,0,1)$. 
Fix any $|\xi|=1$. Since $\I_\xi u=\I_{-\xi}u$, we can further assume that $\left\langle \xi,e_N\right\rangle\geq0$. Then, for any $n>1$,
\begin{equation}\label{eq4}
\begin{split}
\I_\xi u\left(\frac1n e_N\right)&=C_s  \int_0^{+\infty}\frac{u(\frac1ne_N+\tau\xi)+u(\frac1ne_N-\tau\xi)}{\tau^{1+2s}}\,d\tau\\
&=C_s \left(-\int_{\tau_1(n)}^{\tau_2(n)}\frac{1}{\tau^{1+2s}}\,d\tau+\int_{\tau_2(n)}^{+\infty}\frac{-1+u(\frac1ne_N-\tau\xi)}{\tau^{1+2s}}\,d\tau \right)
\end{split}
\end{equation}
where 
\[ \tau_1(n)=-\frac{\left\langle \xi,e_N\right\rangle}{n}+\sqrt{\left(\frac{\left\langle \xi,e_N\right\rangle}{n}\right)^2+1-\frac{1}{n^2}} \] and 
\[ \tau_2(n)=\frac{\left\langle \xi,e_N\right\rangle}{n}+\sqrt{\left(\frac{\left\langle \xi,e_N\right\rangle}{n}\right)^2+1-\frac{1}{n^2}}. \]
Notice that if $\tau \le \tau_1(n)$ then $\frac1n e_N \pm \tau \xi \in \overline{B_1(0)}$, if $\tau \in (\tau_1(n), \tau_2(n)]$ then $\frac1n e_N - \tau \xi \in \overline{B_1(0)}$, however $\frac1n e_N + \tau \xi \not \in \overline{B_1(0)}$. Finally, if $\tau > \tau_2(n)$, then $\frac1n e_N \pm \tau \xi \not \in \overline{B_1(0)}$, see also Figure \ref{cont}. 

Using $u\leq0$ we obtain from \eqref{eq4} that 
$$
\I_\xi u\left(\frac1n e_N\right)\leq - C_s \int_{\tau_1(n)}^{+\infty}\frac{1}{\tau^{1+2s}}\,d\tau.
$$
Moreover, since $\tau_1(n)\leq\sqrt{1-\frac{1}{n^2}}$\ , we infer that 
\begin{equation}\label{eq6}
\I_\xi u\left(\frac1n e_N\right)\leq -C_s  \int_{\sqrt{1-\frac{1}{n^2}}}^{+\infty}\frac{1}{\tau^{1+2s}}\,d\tau=-C_s  \frac{1}{2s{(1-\frac{1}{n^2})}^s}
\end{equation}
for any $|\xi|=1$. Then
$$
\I_k^+u\left(\frac1n e_N\right)\leq -\frac{kC_s }{2s{(1-\frac{1}{n^2})}^s}
$$
and
$$
\limsup_{n\to+\infty}\I_k^+u\left(\frac1n e_N\right)\leq -\frac{kC_s }{2s}<0
$$
as we wanted to show.

A slight modification of the function $u$ in \eqref{eq5} allows us to show that the map
$$
x\in\Omega\mapsto\I^+_Nu(x) 
$$ 
is also, in general, not continuous. \\
Consider the function
\begin{equation*}
u(x)=\left\{\begin{array}{rl}
0 &  \text{if $|x|\leq1$, or $\langle x, e_N \rangle \leq0$ or $\sum_{i=1}^{N-1} \langle x, e_i \rangle ^2=0$}\\
-1 & \text{otherwise.}
\end{array}\right.
\end{equation*}
As before, using the fact that $u\leq0$ in $\R^N$ and that 
$$
\I_{e_1}u(0)=\ldots=\I_{e_N}u(0)=0,
$$
we have 
$$
\I_N^+u(0)=0.$$
Moreover for any $|\xi|=1$ such that $\left\langle \xi,e_N\right\rangle\in[0,1)$, then \eqref{eq6} still holds. Since for any orthonormal basis $\left\{\xi_1,\ldots,\xi_N\right\}$ there is at most one $\xi_i$ such that $\left\langle \xi_i,e_N\right\rangle=1$, then 
$$
\I_N^+u\left(\frac1n e_N\right)\leq -C_s  \frac{N-1}{2s{(1-\frac{1}{n^2})}^s}
$$
and
$$
\limsup_{n\to+\infty}\I_N^+u\left(\frac1n e_N\right)\leq -C_s \frac{N-1}{2s}.
$$ 

\vskip3ex

A further consequence of the lack of continuity is that the $\sup$ or $\inf$ in the definition of $\I_k^\pm$ are in general not attained under the only assumption $u \in C^2(\Omega) \cap L^\infty(\R^N)$. 
As an example, take
\[ 
u(x)=\begin{cases}
0 &\text{ if} \abs{x} \le 1 \text{ or } \langle x, e_N \rangle  \le 0 \\
e^{-\langle x, e_N \rangle } &\text{ otherwise.}
\end{cases} \]
Then 
\[ \I_1^+ u(0)= \sup_{\abs{\xi}=1} \I_\xi(0)=C_s \sup_{\abs{\xi}=1} \int_0^{+\infty} \frac{u(\tau \xi) + u(-\tau \xi)}{\tau^{1+2s}} \, d\tau. \]
Since $\I_\xi u(0)=\I_{-\xi} u(0)$, we can assume without loss of generality that $\langle \xi, e_N \rangle \in [0, 1]$. Thus
\[ \I_1^+ u(0) = C_s \sup_{\abs{\xi}=1, \langle \xi, e_N \rangle \ge 0} \int_0^{+\infty}\frac{u(\tau\xi)}{\tau^{1+2s}} \, d\tau. \]
Notice that 
\[ \int_0^{+\infty}\frac{u(\tau\xi)}{\tau^{1+2s}} \, d\tau = 
\begin{cases}
0 &\text{ if } \langle \xi, e_N \rangle=0 \\
f(\langle \xi, e_N \rangle) &\text{ if } \langle \xi, e_N \rangle \in (0, 1],
\end{cases}\]
where
\[ f(y) = \int_1^{+\infty} \frac{e^{-\tau y}}{\tau^{1+2s}}\, d\tau , \]
which is continuous and monotone decreasing and
\[ \sup_{y \in (0, 1]} f(y)= f(0)=\int_1^{+\infty} \frac{1}{\tau^{1+2s}} \, d\tau. \]
Therefore  we deduce
\[ \I_1^+ u(0) =C_s \int_1^{+\infty} \frac{1}{\tau^{1+2s}} \, d\tau. \]
However, there does not exist any $\xi$ such that $\I_1^+ u(0)=\I_\xi u(0)$.

Let us now consider the case $\I_k^+$ with $2 \le k \le N$. We take into account the function 
\[ u(x)= \begin{cases}
e^{-\langle x, e_N \rangle } &\text{ if } \sum_{i=1}^{N-2} \langle x, e_i \rangle ^2=0, \, \langle x, e_{N-1} \rangle ^2+ \langle x, e_N \rangle ^2>1, \, \langle x, e_N \rangle  >0 \\
0 &\text{ otherwise.}
\end{cases} \]
In this case, 
\[ \I_k^+ u(0)=\sup_{\theta \in  [0, \pi/2] } (\I_{\eta_1} u(0)  + \I_{\eta_2}u(0)), \]
where
\[ \eta_1=(0, \dots, 0, \cos \theta, \sin \theta), \quad \eta_2=(0, \dots, 0, -\sin \theta, \cos \theta). \]
Thus one has
\[ \I_{\eta_1} u(0)  + \I_{\eta_2}u(0)= 
\begin{cases}
\displaystyle{C_s \int_1^{+\infty} \frac{e^{-\tau \sin \theta} + e^{-\tau \cos \theta}}{\tau^{1+2s}} \,d \tau} &\text{ if } \theta \in (0, \pi/2)\\
&\\
\displaystyle{C_s \int_1^{+\infty} \frac{e^{-\tau}}{\tau^{1+2s}} \, d\tau} &\text{ if } \theta=0 \text{ or } \theta=\pi/2.
\end{cases} \]

Now, let us compute the supremum of the function
\[ F(\theta)=\int_1^{+\infty} \frac{e^{-\tau \sin \theta} + e^{-\tau \cos \theta}}{\tau^{1+2s}} \,d \tau= \int_1^{+\infty} \frac{f(\tau, \theta)}{\tau^{1+2s}} \, d\tau. \]

Observe that 
\begin{equation}\label{primo} 
0 \le \frac{f(\tau, \theta)}{\tau^{1+2s}} \le \frac{2}{\tau^{1+2s}} \in L^1(1, +\infty), 
\end{equation}
and that
\begin{equation}\label{secondo}
\frac{1}{\tau^{1+2s}} \abs{\frac{\partial f}{\partial \theta}} = \frac{1}{\tau^{2s}} \abs{-e^{-\tau \sin \theta} \cos \theta + e^{-\tau \cos \theta} \sin \theta } \le \frac{2}{\tau^{2s}} \in L^1(1, +\infty),
\end{equation}
as $s > 1/2$. By \eqref{primo} and \eqref{secondo}, $F(\theta) \in C^1(0, \pi/2)$ and 
\[ F'(\theta)=\int_1^{+\infty} \frac{\frac{\partial f}{\partial \theta}}{\tau^{1+2s}} \, d\tau. \]
Moreover, 
\begin{equation}\label{terzo}
\frac{\partial^2 f}{\partial \theta^2} = \tau^2 e^{-\tau \sin\theta} \cos^2 \theta + \tau e^{-\tau \sin \theta} \sin \theta + \tau^2 e^{-\tau \cos \theta} \sin^2 \theta + \tau e^{-\tau \cos \theta} \cos \theta >0 
\end{equation}
for all $\tau >1$ and $\theta \in (0, \pi/2)$. Also,
\begin{equation}\label{quarto}
\frac{\partial f}{\partial \theta}(\tau, \pi/4)=0. 
\end{equation}
Combining \eqref{terzo} and \eqref{quarto}, we conclude
\[ F'(\theta)<0, \text{ if } \theta \in (0, \pi/4), \quad F'(\theta)>0, \text{ if } \theta \in (\pi/4, \pi/2). \]
Finally, 
\[ \lim_{\theta \to 0^+} F(\theta)=\lim_{\theta \to \pi/2^-} F(\theta)=\int_1^{+\infty} \frac{1+e^{-\tau}}{\tau^{1+2s}} \, d\tau, \]
which implies
\[ \sup_{0 < \theta < \pi/2} F(\theta)=\int_1^{+\infty} \frac{1+e^{-\tau}}{\tau^{1+2s}} \, d\tau. \]

Therefore, 
\[ \I_k^+ u(0)= C_s \int_1^{+\infty} \frac{1+e^{-\tau}}{\tau^{1+2s}} \, d\tau \]
however there does not exists $\theta \in [0, \pi/2]$ such that 
\[ \I_{\eta_1} u(0)  + \I_{\eta_2}u(0)= C_s \int_1^{+\infty} \frac{1+e^{-\tau}}{\tau^{1+2s}} \, d\tau. \]

\begin{proposition}\label{semicont}
Let $u\in C^2(\Omega)\cap L^\infty(\R^N)$ and consider the maps
\begin{equation*}
\begin{split}
&\Psi:(x,\xi)\in\Omega\times{\mathcal S}^{N-1}\mapsto \I_\xi u(x)\\
&\I^\pm_ku:x\in\Omega\mapsto \I^\pm_ku(x).
\end{split}
\end{equation*}
If $u\in LSC(\R^N)$ (respectively $USC(\R^N)$, $C(\R^N)$) then
\begin{enumerate}
	\item [(i)]  $\Psi\in LSC(\Omega\times{\mathcal S}^{N-1})$ (respectively $USC(\Omega\times{\mathcal S}^{N-1})$, $C(\Omega\times{\mathcal S}^{N-1})$);
	\item [(ii)] $\I^\pm_ku\in LSC(\Omega)$ (respectively $USC(\Omega)$, $C(\Omega)$).
\end{enumerate}
\end{proposition}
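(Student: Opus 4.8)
The plan is to reduce everything to the lower semicontinuous case, then to localize inside $\Omega$ and split the singular integral defining $\mathcal{I}_\xi u$ into a ``near'' piece, controlled by the $C^2$ regularity, and a ``far'' piece, controlled by the $L^\infty(\R^N)$ bound; the near piece will turn out continuous and the far piece will be handled by Fatou's lemma. First I record the reduction: since $\mathcal{I}_\xi(-u)=-\mathcal{I}_\xi u$ and $\I_k^\pm(-u)=-\I_k^\mp u$, and $u\in USC(\R^N)$ iff $-u\in LSC(\R^N)$, the $USC$ statements for $\Psi$ and for $\I_k^\pm u$ follow from the $LSC$ ones applied to $-u$; and the continuity statements then follow because $C=LSC\cap USC$ on each of the spaces involved. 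So it is enough to prove: if $u\in LSC(\R^N)$, then $\Psi\in LSC(\Omega\times\mathcal{S}^{N-1})$ and $\I_k^\pm u\in LSC(\Omega)$.

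I would then fix $x_0\in\Omega$, choose $r,\rho>0$ with $\overline{B_{r+\rho}(x_0)}\subset\Omega$, set $K=\overline{B_r(x_0)}$, and on $K\times\mathcal{S}^{N-1}$ decompose $\mathcal{I}_\xi u(x)=A(x,\xi)+B(x,\xi)$ with
\[A(x,\xi)=C_s\int_0^\rho\frac{\delta(u,x,\tau\xi)}{\tau^{1+2s}}\,d\tau,\qquad B(x,\xi)=C_s\int_\rho^{+\infty}\frac{\delta(u,x,\tau\xi)}{\tau^{1+2s}}\,d\tau.\]
For $A$: if $x\in K$ and $\tau\le\rho$ then $x\pm t\tau\xi\in\overline{B_{r+\rho}(x_0)}\subset\Omega$ for all $t\in[0,1]$, so a second‑order Taylor expansion gives $\abs{\delta(u,x,\tau\xi)}\le M\tau^2$ with $M=\sup_{\overline{B_{r+\rho}(x_0)}}\abs{D^2u}<\infty$; hence the integrand is dominated by $M\tau^{1-2s}\in L^1(0,\rho)$ (this is where $s<1$ is used) and, for each fixed $\tau$, is continuous in $(x,\xi)$ because $u\in C^2$ near $K$, so dominated convergence yields $A\in C(K\times\mathcal{S}^{N-1})$. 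For $B$: the integrand is dominated by $4\norm{u}_{L^\infty(\R^N)}\tau^{-1-2s}\in L^1(\rho,+\infty)$, uniformly in $(x,\xi)$, and the key point is that for each fixed $\tau>\rho$ the map $(x,\xi)\mapsto\delta(u,x,\tau\xi)=u(x+\tau\xi)+u(x-\tau\xi)-2u(x)$ is lower semicontinuous on $K\times\mathcal{S}^{N-1}$: the first two summands are compositions of the $LSC$ function $u$ with continuous maps, and $-2u(x)$ is \emph{continuous} on $K\subset\Omega$ precisely because $u\in C^2(\Omega)$. Consequently, along any $(x_n,\xi_n)\to(x,\xi)$ in $K\times\mathcal{S}^{N-1}$ one has $\liminf_n\delta(u,x_n,\tau\xi_n)\ge\delta(u,x,\tau\xi)$ for every $\tau$, so applying Fatou's lemma to the nonnegative integrands $\tau\mapsto C_s\bigl(\delta(u,x_n,\tau\xi_n)+4\norm{u}_{L^\infty(\R^N)}\bigr)\tau^{-1-2s}$ and then subtracting the finite constant $C_s\int_\rho^{+\infty}4\norm{u}_{L^\infty(\R^N)}\tau^{-1-2s}\,d\tau$ gives $\liminf_n B(x_n,\xi_n)\ge B(x,\xi)$, i.e.\ $B\in LSC(K\times\mathcal{S}^{N-1})$. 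Thus $\Psi=A+B\in LSC(K\times\mathcal{S}^{N-1})$, and since $x_0\in\Omega$ was arbitrary, $\Psi\in LSC(\Omega\times\mathcal{S}^{N-1})$; this is (i).

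To deduce (ii), I would set $G(x,(\xi_1,\dots,\xi_k))=\sum_{i=1}^k\Psi(x,\xi_i)$, which is $LSC$ on $\Omega\times\mathcal{V}_k$ — a finite sum of $\Psi$ composed with continuous coordinate maps — where $\mathcal{V}_k$ is regarded as a compact subset of $(\mathcal{S}^{N-1})^k$. Then $\I_k^+u(x)=\sup_{\{\xi_i\}\in\mathcal{V}_k}G(x,(\xi_i))$ is a pointwise supremum of functions that are $LSC$ in $x$, hence $LSC$ on $\Omega$ with nothing more to prove. For $\I_k^-u(x)=\inf_{\{\xi_i\}\in\mathcal{V}_k}G(x,(\xi_i))$ the compactness of $\mathcal{V}_k$ is used: given $x_n\to x_0$ in $\Omega$, pass to a subsequence realizing $\liminf_n\I_k^-u(x_n)$, pick for each $n$ a minimizing frame $(\xi_i^n)\in\mathcal{V}_k$ (an $LSC$ function attains its minimum on a compact set), extract $(\xi_i^n)\to(\xi_i^0)\in\mathcal{V}_k$, and conclude $\liminf_n\I_k^-u(x_n)=\lim_n G(x_n,(\xi_i^n))\ge G(x_0,(\xi_i^0))\ge\I_k^-u(x_0)$ by the joint lower semicontinuity of $G$. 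Hence $\I_k^\pm u\in LSC(\Omega)$, and the $USC$ and $C$ cases of both (i) and (ii) follow from the reduction in the first paragraph.

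The only genuinely delicate step is the far piece $B$: Fatou's lemma produces exactly the one‑sided estimate $\liminf_n B(x_n,\xi_n)\ge B(x,\xi)$ and no more, which is why a one‑sided global hypothesis on $u$ buys one‑sided semicontinuity rather than full continuity — and the examples preceding the Proposition show this is sharp, continuity of $x\mapsto\I_k^\pm u(x)$ genuinely failing without a global assumption on $u$. Everything else is routine: the $C^2$ bound makes the near piece continuous in every regularity class; summation transports semicontinuity from $\Psi$ to $G$; and the extremum over the compact set of frames $\mathcal{V}_k$ preserves lower semicontinuity under $\inf$ and upper semicontinuity under $\sup$, the complementary pairings ($\sup$ of an $LSC$ family, $\inf$ of a $USC$ family) being automatic and needing no compactness.
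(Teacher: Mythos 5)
Your proof is correct and follows essentially the same strategy as the paper's: split $\mathcal{I}_\xi u$ at a radius $\rho$, control the near part by the $C^2$ bound via Taylor expansion and the far part by Fatou's lemma together with the global semicontinuity and boundedness of $u$, then handle the extremum over frames using the compactness of $\mathcal{V}_k$. The only (harmless) differences are organizational — you obtain continuity of the near piece by dominated convergence for fixed $\rho$ rather than an $O(\rho^{2-2s})$ error sent to zero, use exact rather than $\varepsilon$-approximate minimizing frames for $\I_k^-$, and derive the $USC$ statements from the $LSC$ ones via $u\mapsto -u$ instead of repeating the argument.
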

\begin{proof}
(i) Let  $(x_n,\xi_n)\to (x_0,\xi_0)\in\Omega\times{\mathcal S}^{N-1}$ as $n\to+\infty$. Fix $R>0$ such that $\overline B_R(x_0)\subset \Omega$ and set $M=\displaystyle\max_{x\in\overline B_R(x_0)}\left\|D^2u(x)\right\|$.
For $\rho\in(0,\frac R2)$ it holds that $B_{2\rho}(x_0)\subset B_R(x_0)$ and, for $n$ sufficiently large and any $\tau\in[0,\rho)$, that $x_n\pm\tau\xi_n\in B_{2\rho}(x_0)$. By  a second order Taylor expansion we have
\[
\I_{\xi_n}u(x_n)-\I_{\xi_0}u(x_0)\geq-\frac{M\rho^{2-2s}}{1-s}+C_s \int_\rho^{+\infty}\frac{\delta(u, x_n, \tau \xi_n)}{\tau^{1+2s}}\,d\tau -C_s \int_\rho^{+\infty}\frac{\delta(u, x_0, \tau \xi_0)}{\tau^{1+2s}}\,d\tau\,.
\]
Since $u(x_n)\to u(x_0)$ as $n\to+\infty$, because of the continuity of $u$ in $\Omega$, then using the lower semicontinuity of $u$ in $\R^N$ we have
$$
\liminf_{n\to+\infty}\delta(u, x_n, \tau \xi_n) \geq \delta(u, x_0, \tau \xi_0)
$$ 
for any $\tau\in(0,+\infty)$.  Moreover, taking into account that $\rho>0$ and  $u\in L^\infty(\R^N)$, by means of Fatou's lemma we also infer that 
$$
\liminf_{n\to+\infty}[ \I_{\xi_n}u(x_n)-\I_{\xi_0}u(x_0)] \geq-\frac{M\rho^{2-2s}}{1-s}.
$$
Since $\rho$ can be chosen arbitrarily small we conclude that 
$$
\liminf_{n\to+\infty}\Psi(x_n, \xi_n)\geq \Psi(x_0, \xi_0).
$$
In a similar way one can prove that $\Psi\in USC(\Omega\times{\mathcal S}^{N-1})$ if $u\in USC(\R^N)$. In particular $\Psi\in C(\Omega\times{\mathcal S}^{N-1})$ when $u$ is continuous in $\R^N$.

\medskip

(ii) By the assumption $u\in C^2(\Omega)\cap L^\infty(\R^N)$, we first note that, for any $x\in\Omega$,  $\I_\xi u(x)$ is uniformly bounded with respect to $\xi\in\mathcal S^{N-1}$. Hence
$$
-\infty<\I^-_ku(x)\leq\I^+_ku(x)<+\infty.
$$  
Moreover, for any compact $K \subset \Omega$ there exists a constant $M_K$ such that 
\[ -M_K \le \I_k^- u \le \I_k^+ u \le M_K. \]
Henceforth we shall consider $\I^-_k$, the other case being similar. 

Let $x_n\to x_0\in\Omega$ as $n\to+\infty$  and let $\varepsilon>0$. By the definitions of lower limit and $\I_k^-u$, there exist a subsequence $(x_{n_m})_{m}$ and $k$ sequences $(\xi_i(m))_m\subset\mathcal{S}^{N-1}$, $i=1,\ldots,k$, such that for any $m \in \N$ 
\begin{equation}\label{2505eq1}
\liminf_{n\to+\infty}\I_k^-u(x_n)+2\varepsilon\geq\I_k^-u(x_{n_m})+\varepsilon\geq\sum_{i=1}^k\Psi(x_{n_m},\xi_i(m)).
\end{equation}
Up to extract a further subsequence, we can assume that $\xi_i(m)\to\bar\xi_i$, as $m\to+\infty$, for any $i=1,\ldots,k$. Since $\Psi\in LSC(\Omega\times{\mathcal S}^{N-1})$ by i),  we can pass to the limit as $m\to+\infty$ in \eqref{2505eq1} to get
\begin{equation*}
\liminf_{n\to+\infty}\I_k^-u(x_n)+2\varepsilon\geq\sum_{i=1}^k\Psi(x_0,\bar\xi_i)\geq\I^-_ku(x_0).
\end{equation*}
This implies that $\I_k^-u(x)\in LSC(\Omega)$  sending $\varepsilon\to0$.

The proof that $\I_k^-u(x)\in USC(\Omega)$ under the assumption $u\in USC(\R^N)$ is more standard since $\I_k^-u(x)=\inf_{\left\{\xi_i\right\}_{i=1}^k\in{\mathcal V}_k}\sum_{i=1}^k\Psi(x,\xi_i)$ and $\Psi(x,\xi_i)\in USC(\Omega)$ by i).

Lastly if $u\in C(\R^N)$, by the previous cases $\I_k^-$ is in turn a continuous function in $\Omega$.
\end{proof}

\section{Comparison and maximum principles}\label{sec:comparison}

We consider the problems
\begin{equation}\label{2505eq2}
\left\{\begin{array}{cl}
\I^\pm_ku+c(x)u=f(x) &  \text{in $\Omega$}\\
u=0 & \text{in $\R^N\backslash\Omega$}
\end{array}\right.
\end{equation}

\begin{theorem}\label{comparison}
Let $\Omega\subset\R^N$ be a bounded domain and let $c(x),f(x)\in C(\Omega)$ be such that $\left\|c^+\right\|_{\infty}<C_s  \frac ks(\text{diam}(\Omega))^{-2s}$. If $u\in USC(\overline\Omega)\cap L^\infty(\R^N)$ and $v\in LSC(\overline\Omega)\cap L^\infty(\R^N)$ are respectively sub and supersolution of \eqref{2505eq2}, then $u\leq v$ in  $\Omega$.
\end{theorem}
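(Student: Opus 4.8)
The plan is to argue by contradiction with the doubling of variables technique, exactly as in the first order local case, the two points that need a little care being: one doubles the variables over the \emph{whole} of $\R^N\times\R^N$, and one sends the auxiliary radius $\rho$ to $0$ \emph{before} the doubling parameter. This last order of limits is precisely what allows to avoid any Jensen--Ishii type lemma.

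Assume $M:=\max_{\overline\Omega}(u-v)>0$. Since $u\le 0\le v$ in $\R^N\setminus\Omega$, in particular on $\partial\Omega$, the maximum is attained at some $\bar x\in\Omega$. For $\epsilon>0$ put $\Phi_\epsilon(x,y):=u(x)-v(y)-\frac1{2\epsilon}\abs{x-y}^2$ on $\R^N\times\R^N$. I would first check that $\Phi_\epsilon$ attains its supremum: it is $USC$, bounded above by $\norm{u}_\infty+\norm{v}_\infty$, and on $\{\Phi_\epsilon\ge M\}$ one has $\abs{x-y}^2\le 2\epsilon(u(x)-v(y)-M)$ bounded, while $u(x)-v(y)\ge M>0$ forces $x\in\Omega$, or else $v(y)<0$ hence $y\in\Omega$; in both cases $(x,y)$ stays in a fixed bounded set, so the superlevel set is compact. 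Let $(x_\epsilon,y_\epsilon)$ be a maximum point. The standard estimates give $\tfrac1\epsilon\abs{x_\epsilon-y_\epsilon}^2\to0$ and, along a subsequence, $x_\epsilon,y_\epsilon\to\hat x$ with $u(x_\epsilon)\to u(\hat x)$, $v(y_\epsilon)\to v(\hat x)$, $u(\hat x)-v(\hat x)=M$; hence $\hat x\in\Omega$ and $x_\epsilon,y_\epsilon\in\Omega$ for $\epsilon$ small.

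The only computation exploits that $(x_\epsilon,y_\epsilon)$ is a \emph{global} maximum: comparing $\Phi_\epsilon$ at $(x_\epsilon,y_\epsilon)$ and at $(x_\epsilon\pm\tau\xi,y_\epsilon\pm\tau\xi)$ (the $\tfrac1{2\epsilon}\abs{\cdot}^2$ term is unchanged) gives $u(x_\epsilon\pm\tau\xi)-u(x_\epsilon)\le v(y_\epsilon\pm\tau\xi)-v(y_\epsilon)$, so
\[ \delta(u,x_\epsilon,\tau\xi)\le\delta(v,y_\epsilon,\tau\xi)\qquad\text{for all }\tau>0,\ \xi\in\mathcal S^{N-1}, \]
and for $\tau>\text{diam}(\Omega)$ the points $x_\epsilon\pm\tau\xi$, $y_\epsilon\pm\tau\xi$ lie outside $\Omega$, so there $u\le0\le v$ and, using $u(x_\epsilon)-v(y_\epsilon)\ge M$, one gets the sharper $\delta(u,x_\epsilon,\tau\xi)-\delta(v,y_\epsilon,\tau\xi)\le-2M$. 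Hence, for every frame $\{\xi_i\}\in\mathcal V_k$ and every $\rho\in(0,\text{diam}(\Omega))$,
\[ \sum_{i=1}^k\int_\rho^{+\infty}\frac{\delta(u,x_\epsilon,\tau\xi_i)-\delta(v,y_\epsilon,\tau\xi_i)}{\tau^{1+2s}}\,d\tau\ \le\ \int_{\text{diam}(\Omega)}^{+\infty}\frac{-2kM}{\tau^{1+2s}}\,d\tau\ =\ -\frac{kM}{s\,(\text{diam}(\Omega))^{2s}}, \]
a bound that is uniform in $\rho$ and (for $\epsilon$ small) in $\epsilon$.

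Finally I would insert the admissible test functions $\varphi(x)=v(y_\epsilon)+\tfrac1{2\epsilon}\abs{x-y_\epsilon}^2$ (touching $u$ from above at $x_\epsilon$) and $\psi(y)=u(x_\epsilon)-\tfrac1{2\epsilon}\abs{x_\epsilon-y}^2$ (touching $v$ from below at $y_\epsilon$) into the sub/supersolution inequalities. Here $\delta(\varphi,x_\epsilon,\tau\xi)=\tau^2/\epsilon$ and $\delta(\psi,y_\epsilon,\tau\xi)=-\tau^2/\epsilon$ do not depend on $\xi$, so the local parts of $\mathcal{I}$ factor out as $\pm C_s\,k\rho^{2-2s}/(\epsilon(2-2s))$. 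Using that the supersolution inequality holds for \emph{every} frame (the defining $\sup$ being $\le$ a number), one chooses in the subsolution inequality a frame which is $\eta$-almost optimal, uses the same frame in the supersolution inequality, subtracts, and estimates the surviving integral by the displayed bound; this gives
\[ C_s\Big(\frac{2k\rho^{2-2s}}{\epsilon(2-2s)}-\frac{kM}{s\,(\text{diam}(\Omega))^{2s}}+\eta\Big)\ \ge\ f(x_\epsilon)-f(y_\epsilon)-c(x_\epsilon)u(x_\epsilon)+c(y_\epsilon)v(y_\epsilon). \]
Letting first $\rho\to0^+$ and $\eta\to0^+$ with $\epsilon$ frozen — at this stage no convergence of the singular integrals of the rough functions $u,v$ is needed, which is the whole point — and then $\epsilon\to0$ along the subsequence, the right-hand side tends to $c(\hat x)(v(\hat x)-u(\hat x))=-c(\hat x)M$, so $c(\hat x)\ge C_s\,k/(s\,(\text{diam}(\Omega))^{2s})$, contradicting $c(\hat x)\le\norm{c^+}_\infty<C_s\,k/(s\,(\text{diam}(\Omega))^{2s})$. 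The operator $\I_k^-$ is handled in the same way, exchanging the roles of sub- and supersolution in the choice of the almost optimal frame. The only slightly delicate points are the two highlighted ones (compactness of the superlevel set, which uses the exterior Dirichlet condition, and the order of the limits); the rest is the elementary computation of $\delta$ on paraboloids.
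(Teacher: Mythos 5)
Your proposal is correct and follows essentially the same route as the paper: doubling of variables, the observation that \emph{global} maximality of the doubled functional yields $\delta(u,x_\epsilon,\tau\xi)\le\delta(v,y_\epsilon,\tau\xi)$ for all $\tau,\xi$, the quantitative improvement to $-2M$ for $\tau\ge\text{diam}(\Omega)$ coming from the exterior Dirichlet conditions, and the crucial order of limits ($\rho\to0$ before the penalization parameter). Your use of an $\eta$-almost optimal frame before subtracting the two extremal inequalities is equivalent to the paper's direct bound of a difference of suprema by the supremum of the differences. The one step where you are less careful than the paper is the attainment of the supremum of $\Phi_\epsilon$ over all of $\R^N\times\R^N$: you assert that $\Phi_\epsilon$ is $USC$, but $u$ is only assumed to lie in $USC(\overline\Omega)\cap L^\infty(\R^N)$ (and $v$ in $LSC(\overline\Omega)\cap L^\infty(\R^N)$), so $\Phi_\epsilon$ need not be upper semicontinuous at points of $\partial\Omega$ approached from outside $\overline\Omega$ and your superlevel set need not be closed. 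The paper circumvents this by first maximizing over the compact set $\overline\Omega\times\overline\Omega$, where semicontinuity is available, and then verifying via a case analysis in a boundary collar --- using the semicontinuity bounds \eqref{2505eq4}--\eqref{2505eq4.1} --- that this maximum dominates every value outside $\overline\Omega\times\overline\Omega$; your own case distinction ($x\in\Omega$, or else $v(y)<0$ and $y\in\Omega$) contains the right ingredients, so the repair is routine, but as written the compactness claim is not justified.
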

\begin{proof}
We shall detail the proof in the case $\I^+_k$, the same arguments applying to $\I^-_k$ as well.
We argue by contradiction by supposing that there exists $x_0\in\Omega$ such that
$$
\max_{\R^N}(u-v)=u(x_0)-v(x_0)>0.
$$
Doubling the variables,  for $n\in\mathbb N$ we consider  $(x_n,y_n)\in\overline\Omega\times\overline\Omega$ such that 
\begin{equation}\label{2505eq3}
\max_{\overline\Omega\times\overline\Omega}(u(x)-v(y)-n|x-y|^2)=u(x_n)-v(y_n)-n|x_n-y_n|^2\geq u(x_0)-v(x_0).
\end{equation}
Using \cite[Lemma 3.1]{CIL}, up to subsequences, we have
\begin{equation}\label{2505eq6}
\lim_{n\to+\infty}(x_n,y_n)=(\bar x,\bar x)\in\Omega\times\Omega
\end{equation}
and 
\begin{equation}\label{2505eq7}
\lim_{n\to+\infty}u(x_n)=u(\bar x),\quad \lim_{n\to+\infty}v(x_n)=v(\bar x),\quad u(\bar x)-v(\bar x)=u(x_0)-v(x_0).
\end{equation}
By semicontinuity of $u$ and $v$ we can find moreover $\varepsilon>0$ such that 
\begin{equation}\label{2505eq4}
u(x)<u(x_0)-v(x_0) \quad\forall x\in\Omega_\varepsilon
\end{equation}
and also
\begin{equation}\label{2505eq4.1}
-v(x)<u(x_0)-v(x_0) \quad\forall x\in\Omega_\varepsilon
\end{equation}
where $\Omega_\varepsilon=\left\{x\in\overline\Omega:\;\text{dist}(x,\partial\Omega)<\varepsilon\right\}$.
We first claim that for $n\geq\frac{\left\|u\right\|_\infty+\left\|v\right\|_\infty}{\varepsilon^2}$ 
\begin{equation}\label{2505eq5}
\max_{\overline\Omega\times\overline\Omega}[ u(x)-v(y)-n|x-y|^2]=\max_{\R^N\times\R^N}[u(x)-v(y)-n|x-y|^2]\,.
\end{equation}
To show \eqref{2505eq5} take any $(x,y)\notin\overline\Omega\times\overline\Omega$:

Case 1. If $|x-y|\geq\varepsilon$, then 
$u(x)-v(y)-n|x-y|^2\leq\left\|u\right\|_\infty+\left\|v\right\|_\infty-n\varepsilon^2\leq0$; 

Case 2. If $|x-y|<\varepsilon$ and both $x\notin\overline\Omega$ and $y\notin\overline\Omega$, then
	$u(x)-v(y)-n|x-y|^2\leq0$;
	
Case 3. If $|x-y|<\varepsilon$ and  $x\notin\overline\Omega,\ y\in\overline\Omega$ or $x\in\overline\Omega,\ y\notin\overline\Omega$, then using \eqref{2505eq4} and \eqref{2505eq4.1} we infer that $u(x)-v(y)-n|x-y|^2<u(x_0)-v(x_0)$.

Thus, \eqref{2505eq5} is proved. 

 Taking $\varphi_n(x):=u(x_n)+ n|x-y_n|^2 - n|x_n-y_n|^2$ and $\phi_n(y)=v(y_n)- n|x_n-y|^2 +n|x_n-y_n|^2$, we see that $\varphi_n$ touches $u$ in $x_n$ from above, while $\phi_n$ touches $v$ in $y_n$ from below. Hence for any $\rho>0$
\begin{equation}\label{2505eq8}
\begin{split}
f(x_n)&\leq c(x_n)u(x_n)+C_s \sup_{\left\{\xi_i\right\}_{i=1}^k\in{\mathcal V}_k}\sum_{i=1}^k \Bigg(\int_0^\rho\frac{\delta(\varphi_n, x_n, \tau \xi_i)}{\tau^{1+2s}}\,d\tau 
+\int_\rho^{+\infty}\frac{\delta(u, x_n, \tau \xi_i)}{\tau^{1+2s}}\,d\tau\Bigg)\\
&=c(x_n)u(x_n)+\frac{kn\rho^{2-2s}}{1-s}+C_s \sup_{\left\{\xi_i\right\}_{i=1}^k\in{\mathcal V}_k}\Bigg(\sum_{i=1}^k\int_\rho^{+\infty}\frac{\delta(u, x_n, \tau \xi_i)}{\tau^{1+2s}}\,d\tau\Bigg).
\end{split}
\end{equation}
In a dual fashion
\begin{equation}\label{2505eq9}
f(y_n)\geq c(y_n)v(y_n)-\frac{kn\rho^{2-2s}}{1-s}+C_s \sup_{\left\{\xi_i\right\}_{i=1}^k\in{\mathcal V}_k}\Bigg(\sum_{i=1}^k\int_\rho^{+\infty}\frac{\delta(v, y_n, \tau \xi_i)}{\tau^{1+2s}}\,d\tau\Bigg).
\end{equation}
Subtracting \eqref{2505eq8} and \eqref{2505eq9} we then obtain
\begin{equation}\label{2505eq10}
\begin{split}
f(x_n)-f(y_n)&\leq \frac{2kn\rho^{2-2s}}{1-s}+c(x_n)u(x_n)-c(y_n)v(y_n)
\\ &\hspace{1cm} +C_s  \sup_{\left\{\xi_i\right\}_{i=1}^k\in{\mathcal V}_k}\sum_{i=1}^k\int_\rho^{+\infty}\frac{\delta(u, x_n, \tau\xi_i) - \delta(v, y_n, \tau \xi_i)}{\tau^{1+2s}}\,d\tau.
\end{split}
\end{equation}
From \eqref{2505eq3} and \eqref{2505eq5}  we have
$$
u(x)-v(y)-n|x-y|^2\leq u(x_n)-v(y_n)-n|x_n-y_n|^2\quad\forall x,y\in\R^N.
$$
Choosing in particular $x=x_n\pm\tau \xi_i$ and $y=y_n\pm\tau\xi_i$ we deduce that 
\[ \delta(u, x_n, \tau\xi_i) - \delta(v, y_n, \tau \xi_i) \le 0 \]
 for any $\tau>0$ and for any $|\xi_i|=1$. Thus \eqref{2505eq10} implies, assuming without loss of generality that $\rho < \text{diam}(\Omega)$, 
\begin{equation}\label{2505eq11}
\begin{split}
f(x_n)-f(y_n)&\leq \frac{2kn\rho^{2-2s}}{1-s}+c(x_n)u(x_n)-c(y_n)v(y_n)\\
&\hspace{0.5cm}+C_s  \sup_{\left\{\xi_i\right\}_{i=1}^k\in{\mathcal V}_k}\sum_{i=1}^k\int_{\text{diam}(\Omega)}^{+\infty}\frac{\delta(u, x_n, \tau\xi_i) - \delta(v, y_n, \tau \xi_i)}{\tau^{1+2s}}\,d\tau.
\end{split}
\end{equation}
Since $\Omega\subset B_{\text{diam}(\Omega)}(x_n)$ and $x_n\pm\tau\xi_i \notin B_{\text{diam}(\Omega)}(x_n)$ for any $\tau\geq\text{diam}(\Omega)$, then $u(x_n\pm\tau\xi_i)\leq0$. For the same reason $v(y_n\pm\tau\xi_i)\ge0$ when $\tau\geq\text{diam}(\Omega)$. Hence 
$$
\delta(u, x_n, \tau\xi_i) - \delta(v, y_n, \tau \xi_i)\leq -2(u(x_n)-v(y_n))
$$ 
and 
\begin{equation}\label{2505eq12}
\begin{split}
f(x_n)-f(y_n)\leq & \; \frac{2kn\rho^{2-2s}}{1-s}+c(x_n)u(x_n)-c(y_n)v(y_n)\\
&\quad -C_s  (u(x_n)-v(y_n))\frac ks(\text{diam}(\Omega))^{-2s}.
\end{split}
\end{equation}
Letting first $\rho\to0$, then $n\to+\infty$ and using \eqref{2505eq6}-\eqref{2505eq7} we obtain
$$
0\leq (u(x_0)-v(x_0))\left(c(\bar x)-C_s  \frac ks(\text{diam}(\Omega))^{-2s}\right)
$$
which is a contradiction since $u(x_0)-v(x_0)>0$ and $\left\|c^+\right\|_{\infty}<C_s  \frac ks(\text{diam}(\Omega))^{-2s}$.
\end{proof}

In what follows, we clarify what we mean by (weak) maximum/minimum principle. 
\begin{definition}
We say that the operator $\mathcal{I}$ satisfies the weak maximum principle in $\Omega$ if 
\[ \mathcal{I} u  \ge 0 \text{ in } \Omega, \quad u \le 0 \text{ in } \R^N \setminus \Omega \quad \Longrightarrow \quad u \le 0 \text{ in } \Omega, \]
and that it satisfies the strong maximum principle in $\Omega$ if 
\[ \mathcal{I} u  \ge 0 \text{ in } \Omega, \quad u \le 0 \text{ in }\R^N \quad \Longrightarrow \quad u < 0 \text{ or } u \equiv 0 \text{ in } \Omega. \]
Correspondingly, $\mathcal{I}$ satisfies the weak minimum principle in $\Omega$ if 
\[ \mathcal{I} u  \le 0 \text{ in } \Omega, \quad u \ge 0 \text{ in } \R^N \setminus \Omega \quad \Longrightarrow \quad u \ge 0 \text{ in } \Omega, \]
and it satisfies the strong minimum principle in $\Omega$ if 
\[ \mathcal{I} u  \le 0 \text{ in } \Omega, \quad u \ge 0 \text{ in } \R^N \quad \Longrightarrow \quad u > 0 \text{ or } u \equiv 0 \text{ in } \Omega. \]
\end{definition}

The weak minimum/maximum principle follows by applying the comparison principle Theorem \ref{comparison} with $v=0$ or $u=0$. However, the operators $\I_k^\pm$ do not always satisfy the strong maximum or minimum principle, see also \cite{BGT}. 
\begin{theorem}\label{SMP}
The following conclusions hold.
\begin{enumerate}
\item[(i)] The operators $\I_k^-$, with $k < N$, do not satisfy the strong minimum principle in $\Omega$.
\item[(ii)] The operator $\I_N^-$ satisfies the strong minimum principle in $\Omega$. 
\item[(iii)] The operators $\I_k^+$, with $k \le N$,  satisfy the strong minimum principle in $\Omega$. 
\end{enumerate}
\end{theorem}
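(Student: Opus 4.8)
The plan is to treat the three items separately, as they have genuinely different flavours. For item~(i), the idea is to exhibit a counterexample. Since $k<N$, pick a subspace direction along which we can make $u$ vanish identically while being strictly positive elsewhere. Concretely, take a function like $u(x)=\operatorname{dist}(x,\Pi)^{\alpha}$ (suitably truncated and smoothed, and with a harmless modification outside $\Omega$ to keep $u\ge0$ on all of $\R^N$), where $\Pi$ is the $(N-k)$-dimensional coordinate subspace $\{x_1=\dots=x_k=0\}$, or even more simply a nonnegative function supported away from $\Pi$. On $\Pi\cap\Omega$ one has $u\equiv0$, so $u$ is not strictly positive; yet by choosing, at each point $x\in\Pi$, the orthonormal $k$-frame $\{e_1,\dots,e_k\}$ spanning the directions orthogonal to $\Pi$, one computes $\sum_i\mathcal{I}_{e_i}u(x)\ge 0$ because $\delta(u,x,\tau e_i)=u(x+\tau e_i)+u(x-\tau e_i)\ge0$ (as $u(x)=0$ and $u\ge0$); hence the infimum defining $\I_k^-u(x)$ — wait, one must be careful: the infimum could still be negative in other frames. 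So instead one should arrange $u$ to be, near $\Pi$, a function of the $k$ variables $(x_1,\dots,x_k)$ only (constant in the $\Pi$-directions), e.g. $u(x)=g(x_1^2+\dots+x_k^2)$ with $g\ge0$, $g(0)=0$, $g$ convex and smooth; then for \emph{every} unit $\xi$, $\delta(u,x,\tau\xi)\ge0$ at points of $\Pi$ by convexity-type positivity, so $\mathcal{I}_\xi u(x)\ge0$ for all $\xi$, whence $\I_k^-u\ge0$ there and in fact $\I_k^-u\le 0$ could fail — what we actually need is $\I_k^-u\le0$ in $\Omega$ for the hypothesis of the strong minimum principle. The cleanest route: choose $u$ so that $\I_k^-u\equiv 0$ in $\Omega$ (this forces $u$ to be "fractionally affine" in some $k$ directions at each point), for instance by taking $u$ depending only on $(x_1,\dots,x_k)$ and requiring at each point a frame in the complementary directions along which $\delta(u,\cdot)=0$; a constant-in-$\Pi$-directions nonnegative bump does the job, giving $\I_k^-u\le 0$ while $u$ vanishes on $\Pi\cap\Omega$ without vanishing identically. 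This contradicts the strong minimum principle.

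For item~(ii), the operator is $\I_N^-$, so at each point the infimum runs over \emph{all} orthonormal bases of $\R^N$. Suppose $\I_N^-u\le0$ in $\Omega$, $u\ge0$ in $\R^N$, and $u(x_0)=0$ for some $x_0\in\Omega$. Then $x_0$ is a global minimum of $u$, so $\delta(u,x_0,\tau\xi)=u(x_0+\tau\xi)+u(x_0-\tau\xi)\ge0$ for every $\xi$ and every $\tau>0$; hence $\mathcal{I}_\xi u(x_0)\ge0$ for every $\xi$, and therefore $\I_N^-u(x_0)\ge0$. Combined with $\I_N^-u(x_0)\le0$ we get $\I_N^-u(x_0)=0$. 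Now fix any orthonormal basis $\{\xi_i\}_{i=1}^N$: since each $\mathcal{I}_{\xi_i}u(x_0)\ge0$ and their sum is $\le \I_N^-u(x_0)=0$... actually the sum over a particular basis is $\ge \I_N^-u(x_0)=0$, and each summand is $\ge0$, so each summand is between $0$ and $0$?—no: the sum is $\ge0$ and $\le$ is not available. Reconsider: $\I_N^-u(x_0)=\inf\sum_i\mathcal{I}_{\xi_i}u(x_0)=0$ and every such sum is $\ge0$; the infimum being $0$ means some sequence of bases drives the sum to $0$, but for the \emph{canonical} basis we only know $\sum_i\mathcal{I}_{e_i}u(x_0)\ge0$. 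To force each $\mathcal{I}_\xi u(x_0)=0$ we use that $\sum_{i=1}^N\mathcal{I}_{e_i}u(x_0)$ is, up to the constant, a multiple of the fractional Laplacian: indeed summing $\delta(u,x_0,\tau e_i)$ over $i$ and integrating reconstructs $(-\Delta)^s$-type integration over $\R^N$ (this is the standard fact that $\sum_i\mathcal{I}_{e_i}$ is comparable to, and for the right $C_s$ essentially equals after a spherical-averaging argument, $-(-\Delta)^s$ — more precisely one shows $\sum_i \mathcal I_{e_i}u(x_0)\ge c\int_{\R^N}\frac{u(x_0+z)-u(x_0)}{|z|^{N+2s}}dz$ is not literally true, so instead argue directly). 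The honest argument: since $\I_N^- u(x_0)\le 0$ and each $\mathcal I_\xi u(x_0)\ge 0$, picking any orthonormal basis gives $0\le \sum_i \mathcal I_{\xi_i}u(x_0)$, and taking the infimum, $0\le \I_N^-u(x_0)\le 0$, so $\I_N^-u(x_0)=0$; then for the canonical basis $0\le \sum_i\mathcal I_{e_i}u(x_0)$ and this sum is $\ge \I_N^- u(x_0)=0$, which gives nothing new — the correct conclusion is obtained by noting each $\mathcal I_{e_i}u(x_0)\ge 0$ individually and that if even one were strictly positive the infimum over bases could only decrease by rotating, but a continuity/compactness argument on $\mathcal S^{N-1}$ shows $\inf_\xi \mathcal I_\xi u(x_0)\ge 0$ with the sum of $N$ such terms $=0$ forcing $\mathcal I_{\xi}u(x_0)=0$ for the basis realizing (or approximating) the infimum, hence $u(x_0+\tau\xi)+u(x_0-\tau\xi)=0$ for a.e.\ $\tau$ along those directions, hence $u\equiv0$ along a $N$-frame's lines through $x_0$; propagating along a chain of such points covers a neighbourhood, and a connectedness argument gives $u\equiv0$ in $\Omega$. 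I would streamline this using the observation that $\sum_{i=1}^N\mathcal I_{e_i}u(x_0)=0$ together with nonnegativity of each term forces $\mathcal I_{e_i}u(x_0)=0$ for all $i$ \emph{once we know the infimum is attained at the canonical basis up to rotation and $u(x_0)$ is a global min} — the key point being $\delta(u,x_0,\tau\xi)\ge 0$ pointwise, so $\mathcal I_\xi u(x_0)=0\iff \delta(u,x_0,\tau\xi)=0$ for a.e.\ $\tau$, and then propagate.

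For item~(iii), $\I_k^+u\le0$ in $\Omega$, $u\ge0$ in $\R^N$, $u(x_0)=0$: again $x_0$ is a global minimum so $\delta(u,x_0,\tau\xi)\ge0$ for all $\xi,\tau$, giving $\mathcal I_\xi u(x_0)\ge0$ for all $\xi$, hence $\sum_{i=1}^k\mathcal I_{\xi_i}u(x_0)\ge0$ for every $k$-frame, and taking the supremum, $\I_k^+u(x_0)\ge0$; with $\I_k^+u(x_0)\le0$ we get $\I_k^+u(x_0)=0$. But $\I_k^+u(x_0)=\sup\sum_i\mathcal I_{\xi_i}u(x_0)=0$ while each individual sum is $\ge0$; taking the sup $=0$ with all terms $\ge 0$ forces \emph{every} $k$-frame to have $\sum_i\mathcal I_{\xi_i}u(x_0)=0$, hence (nonnegativity of each summand) $\mathcal I_{\xi}u(x_0)=0$ for \emph{every} unit $\xi$, hence $\delta(u,x_0,\tau\xi)=0$ for a.e.\ $\tau>0$ and every $\xi$, i.e.\ $u(x_0+\tau\xi)+u(x_0-\tau\xi)=0$; since both terms are $\ge0$ this forces $u\equiv0$ on all of $\R^N$, in particular in $\Omega$. (Strictly: $u(x_0+z)=0$ for a.e.\ $z$, and then by the same argument at those points, or by approximation/semicontinuity, $u\equiv0$; I would phrase it via: the set $\{u=0\}$ has full measure and is closed-from-below enough, or simply note the relation holds for all $\tau$ by continuity of $\tau\mapsto u(x_0\pm\tau\xi)$ where $u$ is continuous and a limiting argument elsewhere.) The main obstacle across all three parts is the care needed in the measure-zero "for a.e.\ $\tau$" statements and in propagating $u\equiv0$ from lines through one point to the whole connected domain $\Omega$; this propagation is where one must invoke connectedness of $\Omega$ and, for (ii), the fact that through each point one has a full orthonormal frame of zero-lines, which is enough to reach any nearby point. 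For (i), the only real work is checking that the candidate $u$ genuinely satisfies $\I_k^-u\le0$ (equivalently $=0$) in $\Omega$ despite the infimum over frames; choosing $u$ constant along a well-chosen $(N-k)$-dimensional subspace makes this transparent.
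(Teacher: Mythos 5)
Your part (iii) is essentially sound: for $\I_k^+$ the \emph{supremum} over frames being $\le 0$, combined with $\delta(u,x_0,\tau\xi)=u(x_0+\tau\xi)+u(x_0-\tau\xi)\ge 0$ at the global minimum $x_0$ (use $\varphi\equiv 0$ as test function in the viscosity inequality), forces every integral to vanish, hence $u=0$ a.e.\ and then everywhere by lower semicontinuity. This is a genuinely different and more direct route than the paper, which instead reduces (iii) to (ii) via the algebraic observation that $\I_k^+u\le 0$ implies $\I_N^-u\le 0$ (averaging over the $\binom{N-1}{k-1}$ occurrences of each vector among the $k$-subsets of a fixed orthonormal basis); your argument in fact yields the stronger conclusion of Proposition~\ref{prop:strong}(i) directly.

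Part (ii) is where the real gap lies, and you have correctly identified but not resolved it. Because $\I_N^-$ is an \emph{infimum}, the viscosity inequality at $x_0$ only gives, for each $\varepsilon>0$, \emph{some} frame whose sum of integrals is $\le\varepsilon$; it does not give a frame realizing $0$ (the paper shows in Section~\ref{sec:continuity} that the extremum is in general not attained), and $u$ is merely $LSC(\Omega)\cap L^\infty(\R^N)$, so $\mathcal I_\xi u(x_0)$ cannot be manipulated classically. Even granting a limiting frame $\{\bar\xi_i\}$ with $u=0$ a.e.\ on its $N$ lines through $x_0$, the proposed propagation ``along a chain of such points'' does not close: the union of the (uncontrolled) frames of lines through points of a line is a lower-dimensional set for $N\ge 3$, and there is no argument that it becomes dense in a neighbourhood, which lower semicontinuity would require. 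The paper's proof avoids both obstacles with a quantitative barrier argument: it picks an interior ball $B_R(y)\subset\{u>0\}$ touching the zero set at $x_1$, notes that \emph{every} orthonormal frame contains a direction $\xi_j$ with $\langle\xi_j,\widehat{x_1-y}\rangle\ge 1/\sqrt N$, and bounds the integral along $x_1-\tau\xi_j$ from below by $c\min_{\overline B_R(y)\setminus B_\rho(x_1)}u>0$ uniformly over frames, contradicting the $\varepsilon$-inequality. Some such uniform-over-frames lower bound is indispensable here; your sketch needs to be replaced by it. Finally, in (i) your concrete candidate $u=g(x_1^2+\dots+x_k^2)$, constant along the $(N-k)$-dimensional subspace $\Pi$, only supplies $N-k$ directions with $\delta=0$, which is fewer than the $k$ needed for a frame when $k>N/2$; take instead $u$ depending on a \emph{single} coordinate (constant along an $(N-1)$-dimensional subspace), which accommodates any $k\le N-1$ and is in the spirit of the counterexample of \cite{BGT} that the paper cites.
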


\begin{remark}
We notice that since $\I_k^+ (-u)= -\I_k^- u$, corresponding results hold for the maximum principle. 
\end{remark}

\begin{proof}
(i) We refer to Proposition 2.2 in \cite{BGT} for a counterexample. 

(ii)
Let us assume that $u$ satisfies
\[ \begin{cases}
\I_N^- u  \le 0 &\text{ in } \Omega \\
u \ge 0 &\text{ in } \R^N
\end{cases} \] 
and let $u(x_0)=0$ for some $x_0 \in \Omega$. We want to prove that $u \equiv 0$ in $\Omega$. Let us proceed by contradiction, and assume there exists $y \in \Omega$ such that $u(y) >0$. Let us choose a ball $B_R(y)$ such that 
\begin{itemize}
\item $B_R(y) \subset \Omega$
\item $u(x) >0$ for all $x \in \overline B_R(y) \setminus \{ x_1 \}$
\item there exists $x_1 \in \partial B_R(y)$ such that $u(x_1)=0$. 
\end{itemize}
Then, by definition of viscosity super solutions, for fixed $\rho>0$ and $\varphi \in C^2(B_\rho(x_1))$, for which $x_1$ is a minimum point for $u-\varphi$, and for every $\varepsilon>0$, there exists a orthonormal basis $ \{ \xi_1, \dots, \xi_N \}=\{ \xi_1(\varepsilon), \dots, \xi_N(\varepsilon) \}$ such that 
\begin{equation}\label{eqn:IN strong} \varepsilon  \ge C_s  \sum_{i=1}^N \Big( \int_{0}^\rho \frac{\delta(\varphi, x_1, \tau \xi_i)}{\tau^{1+2s}} \, d\tau 
+\int_{\rho}^{+\infty} \frac{\delta(u, x_1, \tau \xi_i)}{\tau^{1+2s}}  \, d\tau \Big). 
\end{equation}
Fix $\rho < \frac{2R}{\sqrt{N}}$, and choose $\varphi \equiv 0$ on $B_\rho(x_1)$. Moreover, we know that there exists $j=j(\varepsilon)$ such that 
\[ \langle \xi_j, \widehat{x_1- y} \rangle \ge \frac{1}{\sqrt{N}}, \quad \text{ with } \widehat{x_1- y} =\frac{x_1-y}{\abs{x_1-y}}. \]  
In particular, one has $\rho < 2R\langle \xi_j, \widehat{x_1- y} \rangle$. 
Then, taking into account that $u(x_1)=0$ and $u \ge 0$, from \eqref{eqn:IN strong} one has
\begin{align*} \varepsilon &\ge C_s \sum_{i=1}^N \int_{\rho}^{+\infty} \frac{u(x_1 + \tau \xi_i)+u(x_1-\tau \xi_i)}{ \tau^{1+2s}} \, d\tau \\
&= C_s \sum_{i \ne j} \int_{\rho}^{+\infty} \frac{u(x_1 + \tau \xi_i)+ u(x_1-\tau \xi_i) }{\tau^{1+2s}} \, d\tau + C_s  \int_{\rho}^{+\infty} \frac{u(x_1 + \tau \xi_j)+ u(x_1-\tau \xi_j)}{ \tau^{1+2s}} \, d\tau \\
& \ge C_s  \int_{\rho}^{+\infty} \frac{u(x_1 - \tau \xi_j)}{\tau^{1+2s}} \, d\tau \ge C_s  \int_{\rho}^{2R\langle \xi_j, \widehat{x_1- y} \rangle} \frac{u(x_1 - \tau \xi_j)}{\tau^{1+2s}} \, d\tau \\
&\ge C_s  \frac1{2s} \left( \rho^{-2s} - \left(\frac{2R}{\sqrt N} \right)^{-2s} \right) \min_{\overline B_R(y) \setminus B_\rho(x_1)} u, 
\end{align*}
as $x_1-\tau \xi_j \in \overline B_R(y) \setminus B_\rho(x_1)$ if $\rho<\tau < 2R\langle \xi_j, \widehat{x_1- y} \rangle$, which gives the contradiction if $\varepsilon$ is small enough.  

\textit{(iii)}
The conclusion for the operators $\I_k^+$ follows recalling 
\[ \I_k ^+ u(x) \le 0\;  \Rightarrow \; \I_N^- u(x)\le 0. \]
Indeed, since $\I_k ^+ u(x) \le 0$ one has $\sum_{i=1}^k \I_{\xi_i}  u(x) \le 0$ for any $\{ \xi_1, \dots, \xi_k\} \in \mathcal{V}_k$. Fix any $\{ \bar \xi_1, \dots, \bar \xi_N \} \in \mathcal{V}_N$, and denote with $\mathcal{A}_k$ the set of all subsets of cardinality $k$ of $\{ \bar \xi_1, \dots, \bar \xi_N \}$. Clearly, $\mathcal{A}_k \subset \mathcal{V}_k$. In particular,
\[ 0 \ge \sum_{\{ \xi_i \} \in \mathcal{A}_k } \sum_{i=1}^k \I_{\xi_i}  u(x) ={{N-1}\choose {k-1}} \sum_{i=1}^N \I_{\bar \xi_i}  u(x), \]
from which the conclusion. 
\end{proof}

\begin{remark}
Notice that the proofs above only require $\Omega$ to be connected, and not necessarily bounded. 
\end{remark}

\begin{remark}\label{rmk:ordine}
The same proof as in item \textit{(iii)} shows that 
\[ \I_k ^+ u(x) \le 0\;  \Rightarrow \; \I_{k+1}^+ u(x)\le  \I_k ^+ u(x) \]
and
\[ \I_k ^- u(x) \le 0\;  \Rightarrow \; \I_{k-1}^- u(x)\le  \I_k ^- u(x)\,. \]
\end{remark}

Actually, the operators $\I_k^+$ satisfy a stronger condition than the strong minimum principle, which is also satisfied by the fractional Laplacian, and which turns out to be false for $\I_N^-$. 
\begin{proposition}\label{prop:strong}
One has 
\begin{enumerate}
\item[(i)] The operators $\I_k^+$, with $k \le N$,  satisfy the following 
\[ \I_k^+ u(x) \le 0 \text{ in } \Omega, \quad u \ge 0 \text{ in }\R^N \; \Rightarrow \;
u > 0 \text{ in } \Omega \text{ or } u \equiv 0 \text{ in } \R^N. \]
\item[(ii)] There exist functions $u$ such that $\I_N^- u \le 0$ in $ \Omega$, $u \equiv 0$ in $\overline \Omega$, and $u \not\equiv 0$ in $\R^N \setminus \overline \Omega$. 
\end{enumerate}
\end{proposition}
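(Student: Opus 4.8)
The plan for item (i) is to refine the argument used in Theorem \ref{SMP}(iii): from $\I_k^+ u \le 0$ we have already seen that $\I_N^- u \le 0$, hence by Theorem \ref{SMP}(ii) either $u>0$ in $\Omega$ or $u \equiv 0$ in $\Omega$. So it suffices to rule out the case $u \equiv 0$ in $\Omega$ but $u \not\equiv 0$ in $\R^N \setminus \Omega$; more precisely, if $u \equiv 0$ in $\Omega$ I must show $u \equiv 0$ in all of $\R^N$. Suppose not: then $u \equiv 0$ in $\Omega$, $u \ge 0$ everywhere, and there is a set of positive measure in $\R^N \setminus \Omega$ where $u>0$. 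Pick any $x_0 \in \Omega$; since $u$ restricted to $\Omega$ is the constant $0$, the function $\varphi \equiv 0$ touches $u$ from above at $x_0$ (on any small ball $B_\rho(x_0) \subset \Omega$). Then for every $\varepsilon>0$ the supersolution inequality gives an orthonormal frame $\{\xi_i(\varepsilon)\} \in \mathcal{V}_k$ with
\[ \varepsilon \ge C_s \sum_{i=1}^k \int_\rho^{+\infty} \frac{u(x_0+\tau\xi_i) + u(x_0-\tau\xi_i)}{\tau^{1+2s}}\, d\tau \ge 0. \]
The right-hand side is nonnegative since $u \ge 0$, so each term must tend to $0$; passing to a convergent subsequence of frames, for the limiting frame $\{\bar\xi_i\}$ one gets (by Fatou) $\int_\rho^{+\infty} \tau^{-1-2s}(u(x_0+\tau\bar\xi_i)+u(x_0-\tau\bar\xi_i))\,d\tau = 0$, hence $u = 0$ a.e. on the $k$ lines through $x_0$ in directions $\bar\xi_i$, outside $B_\rho(x_0)$.

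The difficulty — and this is the main obstacle — is that this gives vanishing only along $k$ specific lines, not on a neighborhood, so I cannot immediately conclude $u \equiv 0$ on $\R^N \setminus \Omega$ the way one would for the fractional Laplacian (whose kernel is fully supported). To get around this, I would instead exploit that $x_0$ is an \emph{arbitrary} point of $\Omega$ and that $u \equiv 0$ on the whole open set $\Omega$: the cleanest route is to take a point $\bar x \in \partial\Omega$ in the closure of $\{u>0\}$ and a point $x_0 \in \Omega$ close to it, and note that for $x_0$ near $\partial\Omega$ \emph{every} frame direction $\xi_i$ with a nonnegligible angle to $\partial\Omega$ produces a ray from $x_0$ that quickly exits $\Omega$ into the region where $u$ may be positive; since among any orthonormal $k$-frame the directions cannot all be nearly tangent to $\partial\Omega$ when $k \le N$... — actually the robust argument is simpler: since $u\equiv 0$ in $\Omega$, for $x_0\in\Omega$ and $\rho$ small, $\delta(\varphi,x_0,\tau\xi_i)=0$ for $\tau<\rho$, and $\delta(u,x_0,\tau\xi_i)=u(x_0+\tau\xi_i)+u(x_0-\tau\xi_i)\ge 0$; the supersolution inequality forces the infimum over frames of $\sum_i \int_\rho^\infty \tau^{-1-2s}(u(x_0+\tau\xi_i)+u(x_0-\tau\xi_i))\,d\tau$ to be $0$. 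Using the representation/compactness exactly as in the proof of Proposition \ref{semicont}, the infimum is attained along a limiting frame, so $u$ vanishes along those $k$ rays. Letting $x_0$ vary over all of $\Omega$ and using that $\Omega$ is open, the union of all such rays (over all $x_0\in\Omega$ and all minimizing frames) covers a neighborhood of $\Omega$ in $\R^N$: indeed for any $z$ outside $\Omega$ close enough to $\Omega$, the segment from a nearby $x_0\in\Omega$ to $z$ lies in a minimizing direction for that $x_0$ — and here one must check that the minimizing frame can be chosen to include that direction, which follows because adding any direction in which $u$ already vanishes does not increase the sum. Iterating outward (a connectedness/propagation argument on $\R^N\setminus\overline\Omega$, using that $u\ge0$ and that along every ray from every already-known-zero point in a minimizing frame $u$ stays zero) yields $u\equiv 0$ in $\R^N$, contradicting $u\not\equiv0$.

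For item (ii) the plan is purely constructive: exhibit a concrete $u$ supported outside $\overline\Omega$, identically zero on $\overline\Omega$, with $\I_N^- u \le 0$ in $\Omega$. Take, say, $\Omega = B_1(0)$ and let $u$ be nonnegative, smooth, compactly supported in an annular or conical region bounded away from $\overline\Omega$, arranged so that for each $x\in\Omega$ there is an orthonormal frame $\{\xi_i\}$ along which \emph{every} one of the $N$ lines through $x$ misses $\mathrm{supp}\,u$; then $\sum_i \I_{\xi_i}u(x) = 0$ for that frame, so $\I_N^- u(x) \le 0$. Concretely, if $\mathrm{supp}\,u$ is contained in a thin slab $\{a < \langle x, e_N\rangle < b\}$ with $a>1$, then for any $x\in B_1(0)$ the frame $\{e_1,\dots,e_N\}$ rotated slightly — or in fact one should pick $\mathrm{supp}\,u$ inside a narrow double cone around the $e_N$-axis truncated away from the origin, so that the frame $\{e_1,\dots,e_{N-1}, e_N\}$ gives $\I_{e_i}u(x)=0$ for $i<N$ trivially (those lines stay in the hyperplane $\langle\cdot,e_N\rangle = \langle x,e_N\rangle$, missing the support if $|\langle x,e_N\rangle|<a$) while $\I_{e_N}u(x) = C_s\int(u(x+\tau e_N)+u(x-\tau e_N))\tau^{-1-2s}d\tau \le 0$ fails — so instead rotate $e_N$ to a direction $\xi_N$ avoiding the cone. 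The only thing to verify is that $u$ can be taken zero on $\overline\Omega$ and $C^2$ there (trivially satisfied since it is identically zero near $\overline\Omega$), that $\I_N^- u(x)$ is well-defined (it is, $u$ bounded and smooth in $\Omega$), and the inequality in the viscosity sense — which here reduces to the classical pointwise computation since $u$ is smooth. I expect item (ii) to be routine once the geometry of the support is fixed; the genuine work is in item (i)'s propagation step.
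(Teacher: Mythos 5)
Your reduction of item (i) to the case $u\equiv 0$ in $\Omega$ (via Theorem \ref{SMP}) matches the paper, but the extension to all of $\R^N$ rests on a conceptual error that then forces you into an unnecessary and unsound propagation argument. For the \emph{supremum} operator $\I_k^+$, the supersolution inequality $\sup_{\{\xi_i\}\in\mathcal{V}_k}(\cdots)\le 0$ controls \emph{every} frame simultaneously: for each fixed frame $\{\xi_i\}$ one gets
\[
\sum_{i=1}^k\int_\rho^{+\infty}\frac{u(x_0+\tau\xi_i)+u(x_0-\tau\xi_i)}{\tau^{1+2s}}\,d\tau\le 0,
\]
and since the integrand is nonnegative each integral vanishes. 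As the frame is arbitrary, $u$ vanishes on \emph{every} line through $x_0$, hence on all of $\R^N$ — this is the paper's one-line argument. By instead extracting a single near-extremal frame and passing to a limit, you are using only the information that an inf-type operator would provide, and any argument of that shape must fail: it cannot distinguish $\I_k^+$ from $\I_N^-$, for which the conclusion is false by item (ii). Concretely, in the paper's counterexample for (ii) ($u=0$ on the cross $\{x:\exists i,\ |\langle x,e_i\rangle|\le 1\}$, $u=1$ elsewhere), at every $x\in B_1(0)$ the canonical frame is minimizing and all its rays stay inside the cross where $u=0$; your proposed iteration along rays of minimizing frames never leaves the cross, so the claims that the union of such rays covers a neighborhood of $\Omega$ and that a minimizing frame can be chosen to contain any prescribed direction are unjustified and, in this setting, false.

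For item (ii) your strategy coincides with the paper's — choose $\mathrm{supp}\,u$ so that from every $x\in\Omega$ some orthonormal frame has all $2N$ rays missing the support — but you leave the geometry unresolved (you note yourself that the slab fails and that the cone construction requires a rotated frame whose rays you have not checked). The paper's choice is cleaner and requires no case analysis: $u=0$ on $\{x:\exists i,\ |\langle x,e_i\rangle|\le 1\}$ and $u=1$ elsewhere, for which the \emph{canonical} frame works at every $x\in B_1(0)$, since the line $x+\tau e_i$ keeps every coordinate $j\ne i$ fixed and hence remains in the slab $\{|\langle\cdot,e_j\rangle|\le 1\}$ where $u=0$.
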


\begin{figure}
\centering
\includegraphics[width=0.7\textwidth]{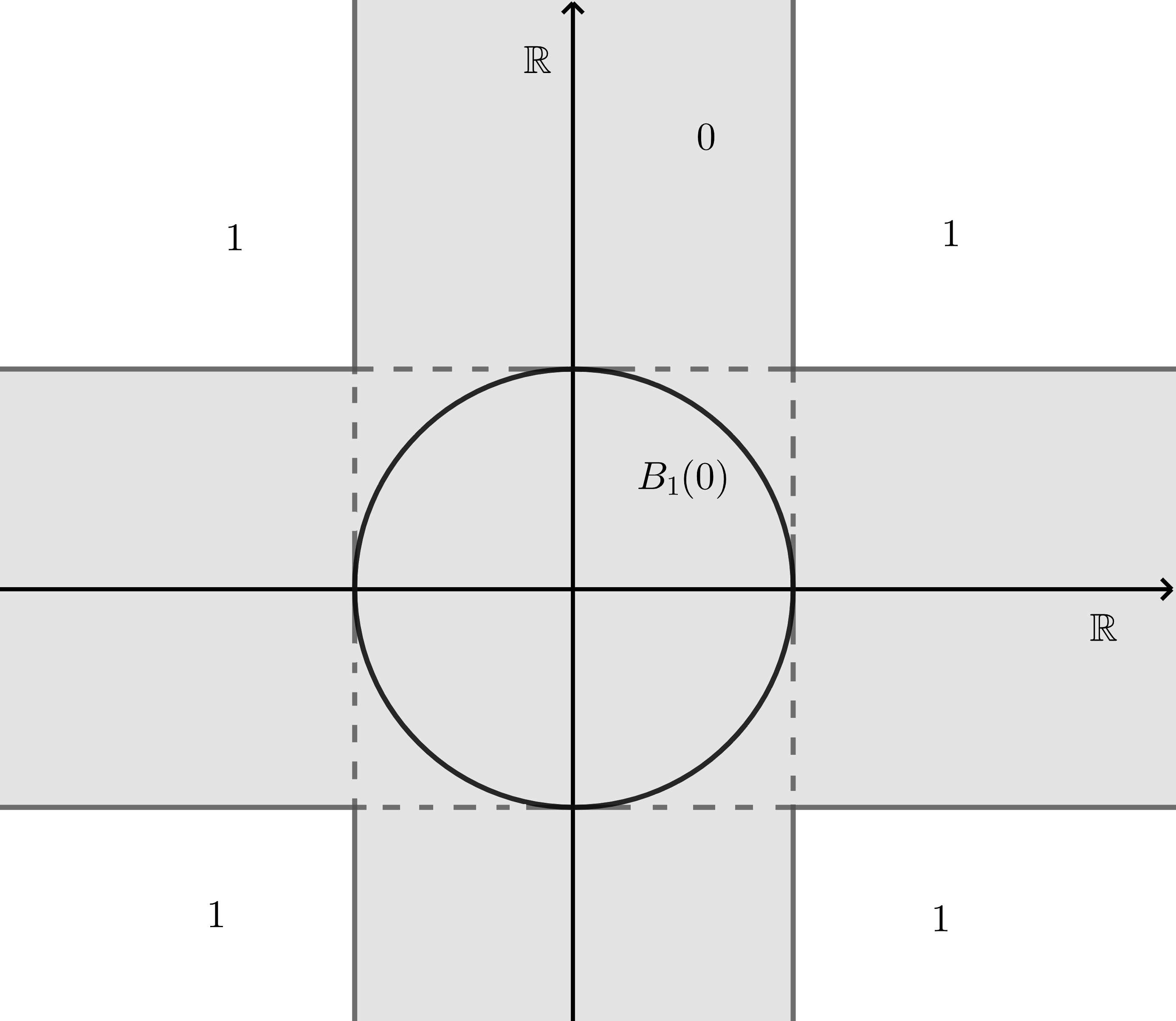}
\caption{Graphic representation of the function $u$ in the proof of Proposition \ref{prop:strong} (ii), with $N=2$.}
\label{croce}
\end{figure}

\begin{proof}
(i) Take $u$ which satisfies the assumptions of the minimum principle, and assume there exists $x_0 \in \Omega$ such that $u(x_0)=0$. By the strong minimum principle in $\Omega$, we know that $u \equiv 0$ in $\Omega$, in particular $u \ge 0$ in $\R^N$. Choose any orthonormal basis of $\R^N$ $\{ \xi_1, \dots, \xi_{N} \}$. Thus, recalling that $u\ge0$ in $\R^N$ 
\begin{align*} 0 \ge \I_k^+ u(x_0) & \ge \sum_{i=1}^k \mathcal{I}_{\xi_i} u(x_0) \\
& =C_s \sum_{i=1}^k \int_0^{+\infty} \frac{u(x_0 + \tau \xi_i) + u(x_0-\tau \xi_i)}{\tau^{1+2s}} \, d\tau.
\end{align*}
Hence, since $u \ge 0$ in $\R^N$, we conclude that $u \equiv 0$ on every line with direction $\xi_i$, and passing by $x_0$. Since the directions are arbitrary, we get the conclusion. 

(ii) Take 
\[ u(x)=\begin{cases}
0  &\text{ if there exists $i =1, \dots, N$ such that } \abs{\langle x, e_i \rangle } \le 1\\
1 &\text{ otherwise,} 
\end{cases} \]
see also Figure \ref{croce}, and notice that 
\[ \I_N^- u (x)\le \sum_{i=1}^N \mathcal{I}_{e_i} u (x) =0 \text{ in } B_1(0), \]
where $e_i$ is the canonical basis. 
Moreover, $u \equiv 0$ in $\overline B_1(0)$, however $u \not\equiv 0$ in $\R^N \setminus \overline B_1(0)$. 
\end{proof}

We now prove a Hopf-type Lemma. 
We will borrow some ideas from \cite{GrecoServadei}, where the fractional Laplacian is taken into account. 
The next known computation provides a useful barrier function. 
\begin{lemma}[Section 3.6 in \cite{BV}]\label{funzione barriera}
For any $\xi \in \mathcal{S}^{N-1}$ one has 
\[ \mathcal{I}_\xi (R^2-\abs{x}^2)^s_+= - C_s \beta(1-s, s)  \, \text{ in } B_R(0), \]
where 
\[ \beta(1-s, s)=\int_0^1 t^{-s} (1-t)^{s-1} \, dt \]
is the Beta function. In particular, 
\[ \I_k^+ (R^2-\abs{x}^2)^s_+= \I_k^- (R^2-\abs{x}^2)^s_+= - k \, C_s \beta(1-s, s) \, \text{ in } B_R(0). \]
\end{lemma}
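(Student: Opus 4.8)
The plan is to establish the one-dimensional identity $\mathcal{I}_\xi (R^2-\abs{x}^2)^s_+ = -C_s\beta(1-s,s)$ for $x\in B_R(0)$, and then deduce the statement for $\I_k^\pm$ as an immediate corollary, since the function on a line through $x$ in direction $\xi$ depends only on the distance from the line to the origin, and the value of $\mathcal{I}_\xi$ is the same for \emph{every} $\xi\in\mathcal{S}^{N-1}$; hence the supremum and infimum over $\mathcal{V}_k$ of $\sum_{i=1}^k \mathcal{I}_{\xi_i}(R^2-\abs{x}^2)^s_+$ both equal $k$ times the common value, giving $\I_k^+ = \I_k^- = -kC_s\beta(1-s,s)$.

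For the one-dimensional computation, I would reduce to the genuinely one-dimensional function. Fix $x\in B_R(0)$ and $\xi\in\mathcal{S}^{N-1}$, write $t=\langle x,\xi\rangle$ and $r^2 = \abs{x}^2 - t^2 \le \abs{x}^2 < R^2$ (the squared distance from the line $\{x+\tau\xi\}$ to the origin). Then $x+\tau\xi$ lies in $B_R(0)$ exactly when $(t+\tau)^2 < R^2 - r^2 =: \rho^2$, so that $(R^2-\abs{x+\tau\xi}^2)_+ = (\rho^2 - (t+\tau)^2)_+$. Thus $\mathcal{I}_\xi(R^2-\abs{x}^2)^s_+(x) = \mathcal{I}_1 (\rho^2 - y^2)^s_+$ evaluated at the point $y=t\in(-\rho,\rho)$, in the one-variable sense, i.e.
\[
\mathcal{I}_\xi(R^2-\abs{x}^2)^s_+(x) = C_s\int_0^{+\infty}\frac{(\rho^2-(t+\tau)^2)^s_+ + (\rho^2-(t-\tau)^2)^s_+ - 2(\rho^2-t^2)^s}{\tau^{1+2s}}\,d\tau.
\]
Now I would invoke the known one-dimensional fact, recorded in Section~3.6 of \cite{BV}, that $(-\Delta)^s$ applied to $(\rho^2-y^2)^s_+$ on $\R$ equals the constant $C_s\beta(1-s,s)$ on $(-\rho,\rho)$; after accounting for the sign convention (our $\mathcal{I}_\xi$ corresponds to $-(-\Delta)^s$ in the $\xi$-direction, cf.\ the remark after the definition of viscosity solution), this yields $\mathcal{I}_\xi(R^2-\abs{x}^2)^s_+ = -C_s\beta(1-s,s)$, independently of $x\in B_R(0)$ and of $\xi$. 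Note the scaling invariance: the one-dimensional computation on $(\rho^2-y^2)^s_+$ gives a value independent of $\rho$ because $\mathcal{I}_1$ is homogeneous of degree $2s$ and $(\rho^2-y^2)^s$ scales like length$^{2s}$, so it suffices to do the case $\rho=1$, $t=0$ and reduce the integral to a Beta function via the substitution $\tau = \tan$ or $\tau^2 = $ a rational change of variables.

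The main obstacle is purely the bookkeeping around sign conventions and the reduction to the one-variable integral — there is no deep difficulty, since the heart of the matter (the value of the fractional Laplacian of the "bubble" $(\rho^2-y^2)^s_+$) is quoted from \cite{BV}. The only point requiring a little care is checking that the function restricted to an arbitrary line is exactly of the form $(\rho^2 - (t+\tau)^2)^s_+$ with the \emph{same} exponent $s$ and a shifted argument, so that the one-dimensional formula applies verbatim at the interior point $t = \langle x,\xi\rangle$; once that is in place, the passage to $\I_k^\pm$ is trivial because the answer does not depend on the chosen direction, so any orthonormal $k$-frame is optimal for both the sup and the inf.
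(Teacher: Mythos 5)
Your argument is correct, and it matches the paper's treatment: the paper gives no proof of this lemma at all, simply quoting the one-dimensional computation from Section 3.6 of \cite{BV}, which is exactly the fact you invoke. The reduction you supply --- restricting to the line $\{x+\tau\xi\}$ to get the shifted profile $(\rho^2-(t+\tau)^2)^s_+$ with $\rho^2=R^2-r^2$, using scaling to remove the dependence on $\rho$, and observing that the direction-independence of the resulting constant makes every frame optimal for both the sup and the inf --- is precisely the justification the paper implicitly relies on.
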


\begin{proposition}\label{hopf}
Let $\Omega$ be a bounded $C^2$ domain, and let $u$ satisfy  
\[ \begin{cases}
\I_N^- u \le 0 &\text{ in } \Omega \\
u \ge 0 &\text{ in } \R^N \setminus \Omega.
\end{cases}\]
Assume $u \not \equiv 0$ in $\Omega$. Then there exists a positive constant $c=c(\Omega, u)$ such that 
\begin{equation}\label{eq hopf} u(x) \ge c\, d(x)^s\quad\forall x\in\overline\Omega. \end{equation} 
\end{proposition}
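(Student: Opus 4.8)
The plan is to combine the strong minimum principle for $\I_N^-$ (Theorem \ref{SMP}(ii)) with the explicit barrier of Lemma \ref{funzione barriera}, pushed around via a standard sliding/covering argument that exploits the interior ball condition granted by the $C^2$ regularity of $\partial\Omega$. First I would observe that by the strong minimum principle $u>0$ in $\Omega$ (here one uses $\I_N^- u\le 0$ in $\Omega$ together with $u\ge 0$ in $\R^N\setminus\Omega$, noting that if $u$ dips to $0$ at an interior point the strong minimum principle forces $u\equiv 0$ in $\Omega$, contradicting $u\not\equiv 0$). Then I would fix, for each boundary point $z\in\partial\Omega$, an interior ball $B_R(y_z)\subset\Omega$ with $z\in\partial B_R(y_z)$, with $R$ uniform in $z$ by the uniform interior ball condition. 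On such a ball I would compare $u$ with $\psi(x)=m\,(R^2-\abs{x-y_z}^2)_+^s / (\text{normalization})$, where $m$ is chosen so that $\psi\le u$ on $\R^N$; this is possible since $u$ is bounded below by a positive constant on a slightly smaller concentric ball and $u\ge 0$ everywhere, while $\psi$ is supported in $B_R(y_z)$.

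For the comparison step I would verify that $\psi$ is a (classical, hence viscosity) subsolution: by Lemma \ref{funzione barriera}, $\I_N^-\psi = -m\,C_s\beta(1-s,s)\cdot(\text{const}) \le 0 = $ (upper bound for $\I_N^- u$) — so actually I need $\I_N^-\psi\ge 0$ or rather that $\psi$ lies below $u$ via comparison. More precisely, since $\I_N^- \psi \le 0$ in $B_R(y_z)$ as well, I cannot directly invoke a comparison principle between two subsolutions; instead I would argue directly by a touching argument: if $\psi\le u$ fails inside $B_R(y_z)$, then $u-\psi$ attains a negative minimum at some interior $\bar x$ (it is $\ge 0$ on the boundary sphere where $\psi=0\le u$, and $\ge 0$ outside $B_R(y_z)$ by the choice of $m$); using $\psi$ as a test function for the supersolution $u$ at $\bar x$ together with the explicit sign $\I_N^-\psi(\bar x)= -m\,C_s\beta(1-s,s) <0$ — wait, that has the wrong sign. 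The correct route is to pick $m$ small and use that $u$ is a supersolution of $\I_N^- u\le 0$: one compares $u$ from below by the barrier using the comparison principle Theorem \ref{comparison} applied on $B_R(y_z)$ with $c\equiv 0$, $f\equiv 0$, where $\psi$ (extended by $0$) is a subsolution and $u$ a supersolution, and $\psi\le u$ outside $B_R(y_z)$. Then Theorem \ref{comparison} yields $\psi\le u$ in $B_R(y_z)$. Evaluating near $z$, since $(R^2-\abs{x-y_z}^2)_+^s \ge c'\, d(x)^s$ for $x$ near $z$ (because $R^2-\abs{x-y_z}^2 = (R-\abs{x-y_z})(R+\abs{x-y_z})\ge R\,\mathrm{dist}(x,\partial B_R(y_z))\ge R\,d(x)$), I get $u(x)\ge c_z\,d(x)^s$ in a neighborhood of $z$.

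Finally I would globalize: compactness of $\partial\Omega$ gives finitely many such neighborhoods covering a tubular neighborhood $\{d(x)<\delta\}$, on each of which $u\ge c_i\,d(x)^s$; taking $c_1=\min_i c_i$ gives $u\ge c_1\,d(x)^s$ there. On the complementary compact set $\{x\in\overline\Omega: d(x)\ge\delta\}$, $u$ has a positive minimum $m_\delta>0$ (by $u>0$ in $\Omega$ and lower semicontinuity, with some care near $\partial\Omega$ — actually this set is a compact subset of $\Omega$), while $d(x)^s\le(\mathrm{diam}\,\Omega)^s$, so $u\ge (m_\delta/(\mathrm{diam}\,\Omega)^s)\,d(x)^s$ there; taking $c=\min\{c_1, m_\delta/(\mathrm{diam}\,\Omega)^s\}$ finishes the proof, the bound extending trivially to $\partial\Omega$ where both sides vanish (or are $\ge 0$).

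The main obstacle I anticipate is the comparison step on the interior ball: one must be careful that the barrier $\psi$, extended by zero outside $B_R(y_z)$, is genuinely a viscosity subsolution of $\I_N^-\psi\le 0$ on $B_R(y_z)$ in the sense required — Lemma \ref{funzione barriera} gives this as a pointwise identity for the classical operator, but one should check the nonlocal tails and the fact that $\psi$ is only $C^s$ up to $\partial B_R(y_z)$ do not spoil the application of Theorem \ref{comparison} (the hypothesis there asks $\psi\in USC(\overline{B_R(y_z)})\cap L^\infty(\R^N)$, which holds, and the smallness condition on $c^+$ is vacuous since $c\equiv 0$). A secondary technical point is ensuring the interior ball radius $R$ can be taken uniform over $\partial\Omega$, which is exactly the content of the $C^2$ (hence uniform interior sphere) assumption on $\Omega$.
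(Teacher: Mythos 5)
Your opening step ($u>0$ in $\Omega$ via the strong minimum principle) and your final globalization are fine and match the paper, but the central comparison step contains a genuine gap that you yourself noticed (``wait, that has the wrong sign'') and then asserted away without fixing. By Lemma \ref{funzione barriera}, the barrier $\psi(x)=m\,(R^2-\abs{x-y_z}^2)^s_+$ satisfies $\I_N^-\psi=-m\,N\,C_s\beta(1-s,s)<0$ in $B_R(y_z)$, and this sign does not depend on the size of $m$. So $\psi$ is a strict \emph{supersolution} of $\I_N^-v=0$, not a subsolution, and Theorem \ref{comparison} orders a subsolution below a supersolution: with both $u$ and $\psi$ supersolutions, no comparison is available, and your claim that ``$\psi$ (extended by $0$) is a subsolution'' is simply false. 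The touching-argument variant fails for the same reason: at a negative interior minimum $\bar x$ of $u-\psi$, testing the supersolution $u$ from below with $\psi+\mathrm{const}$ gives an inequality of the form $\mathcal{I}(u,\psi,\bar x,\rho)\le 0$, and the left-hand side is (up to nonlocal corrections) $-m\,N\,C_s\beta(1-s,s)\le 0$, so no contradiction arises. This sign obstruction is exactly the reason a naive barrier comparison cannot prove a Hopf lemma for $\I_N^-$, and it is the whole difficulty of the proposition.

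The paper's proof gets around it by a quantitative contradiction argument that extracts positivity from the nonlocal tail of $u$ rather than from the barrier. Assuming $w_n-u$ ($w_n=\tfrac1n(R^2-\abs{x-\bar y}^2)^s_+$) is positive somewhere for every $n$, its maximum points $x_n$ must approach the boundary sphere (since $u(x_n)\to0$ while $u>0$ on compacts). Writing the viscosity supersolution inequality for $u$ at $x_n$ with an almost-optimal frame $\{\xi_i(n)\}$, one uses that some $\xi_1(n)$ satisfies $\langle\hat x_n,\xi_1(n)\rangle\ge 1/\sqrt N$, so the ray $x_n-\tau\xi_1(n)$, $\tau\in[\tau_1(n),\tau_2(n)]$, crosses an inner ball $B_{r_0}$ on which $u\ge\min_{\overline B_{r_0}}u>0$. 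That segment contributes a term bounded below by a positive constant independent of $n$, while the barrier terms are $O(1/n)$; letting $n\to\infty$ yields the contradiction. This positive tail contribution, tied to the specific structure of $\I_N^-$ as an infimum over orthonormal frames (every frame must contain a direction with a definite component along $\hat x_n$), is the essential mechanism your proposal is missing; without it, or without constructing a genuine subsolution barrier for $\I_N^-$ (which Lemma \ref{funzione barriera} does not provide), the argument does not close.
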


Notice that the conclusion is not true for the operators $\I_k^-$, $k < N$. 
Indeed, consider the function
\[ u(x)=\begin{cases}
e^{-\frac{1}{1-\abs{x}^2}} &\text{ if } \abs{x} < 1 \\
0 &\text{ if } \abs{x} \ge 1 \end{cases} \]
and take $\{ \xi_i \} \in \mathcal{V}_k$ such that $\langle x, \xi_i \rangle=0$ for any $i=1, \dots, k$. Hence
\[ \abs{x+\tau \xi_i}^2 =\abs{x}^2 + \tau^2 \ge \abs{x}^2 \]
and using  the radial monotonicity of $u$ 
\[ \I_k^- u(x) \le \sum_{i=1}^k \mathcal{I}_{\xi_i} u(x) \le 0 \text{ in } B_1(0). \]
However, $u$ clearly does not satisfy 
\[ u(x) \ge c\, d(x)^\gamma \]
for any positive constants $c, \gamma$. 

As a consequence of Proposition \ref{hopf}, we immediately obtain the following 
\begin{corollary}\label{cor hopf}
Let $\Omega$ be a bounded $C^2$ domain, and let $u$ satisfy  
\[ \begin{cases}
\I_k^+ u \le 0 &\text{ in } \Omega \\
u \ge 0 &\text{ in } \R^N \setminus \Omega.
\end{cases}\]
Assume $u \not \equiv 0$ in $\Omega$. Then  
\[ u(x) \ge c\, d(x)^s \]
for some positive constant $c=c(\Omega, u)$. 
\end{corollary}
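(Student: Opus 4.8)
The plan is to reduce Corollary \ref{cor hopf} directly to Proposition \ref{hopf} by exploiting the ordering between the extremal operators already established in the proof of Theorem \ref{SMP}(iii). The key point is the (viscosity) implication $\I_k^+ u \le 0 \Rightarrow \I_N^- u \le 0$: for any orthonormal frame $\{\bar\xi_1,\dots,\bar\xi_N\}\in\mathcal{V}_N$, summing the inequality $\sum_{i\in A}\mathcal{I}_{\bar\xi_i}u\le 0$ over all $k$-element subsets $A$ of $\{1,\dots,N\}$ — each valid because $\{\bar\xi_i\}_{i\in A}\in\mathcal{V}_k$ and $\I_k^+u\le 0$ — yields $\binom{N-1}{k-1}\sum_{i=1}^N\mathcal{I}_{\bar\xi_i}u\le 0$, hence $\I_N^- u\le 0$.

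First I would check that this combinatorial device passes to the viscosity formulation, not merely to pointwise values. If $\varphi\in C^2(B_\rho(x_0))$ touches $u$ from below at $x_0\in\Omega$ and $\mathcal{I}(u,\varphi,x_0,\rho)+c(x_0)u(x_0)-f(x_0)\le 0$ in the sense of the $\I_k^+$ definition (supremum over $\mathcal{V}_k$), then for each fixed frame $\{\bar\xi_i\}_{i=1}^N$ and each $k$-subset $A$ one has $\sum_{i\in A}\big(\int_0^\rho\frac{\delta(\varphi,x_0,\tau\bar\xi_i)}{\tau^{1+2s}}\,d\tau+\int_\rho^{+\infty}\frac{\delta(u,x_0,\tau\bar\xi_i)}{\tau^{1+2s}}\,d\tau\big)+c(x_0)u(x_0)-f(x_0)\le 0$; averaging over all such $A$ produces the same test-function inequality with the full frame $\{\bar\xi_i\}_{i=1}^N$, and since taking the infimum over $\mathcal{V}_N$ only decreases the left-hand side, $u$ is a viscosity supersolution of $\I_N^- u\le 0$ in $\Omega$. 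This is exactly the mechanism used for the strong minimum principle; see also Remark \ref{rmk:ordine}.

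Then I would simply invoke Proposition \ref{hopf}: $\Omega$ is a bounded $C^2$ domain, $u$ is a viscosity supersolution of $\I_N^- u\le 0$ in $\Omega$, $u\ge 0$ in $\R^N\setminus\Omega$, and $u\not\equiv 0$ in $\Omega$; therefore there exists $c=c(\Omega,u)>0$ with $u(x)\ge c\,d(x)^s$ for all $x\in\overline\Omega$, which is precisely the assertion of the corollary.

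I do not anticipate a real obstacle. The only point demanding a line of care is carrying out the averaging-over-subframes argument at the level of test functions rather than of pointwise operator values, but this is routine and identical to what was already done in Theorem \ref{SMP}(iii); everything else is a direct citation of Proposition \ref{hopf}.
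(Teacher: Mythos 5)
Your proposal is correct and is exactly the route the paper intends: the corollary is stated as an immediate consequence of Proposition \ref{hopf} via the implication $\I_k^+ u\le 0\Rightarrow \I_N^- u\le 0$, which is the same subframe-averaging argument used in Theorem \ref{SMP}(iii). Your extra care in checking that the averaging passes to the viscosity (test-function) level is a legitimate and correctly handled detail that the paper leaves implicit.
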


\begin{remark}
We also point out that from Proposition \ref{hopf} one can deduce the strong maximum/minimum principle for the operators $\I_k^+$, $\I_N^-$, which however follows also by a more direct argument as we showed in Theorem \ref{SMP}. 
\end{remark}
\begin{proof}[Proof of Proposition \ref{hopf}]
By the weak and strong minimum principles, see Theorem \ref{comparison} and Theorem \ref{SMP}-(ii), $u>0$ in $\Omega$. Therefore, for any $K$ compact subset of $\Omega$ we have
\begin{equation}\label{bound} \inf_{y \in K} u(y) >0. \end{equation} 
 Without loss of generality we can further assume that $u$ vanishes somewhere in $\partial\Omega$, otherwise the conclusion is obvious.\\
Since $\Omega$ is a $C^2$ domain, there exists a positive constant $\varepsilon$, depending on $\Omega$, such that for any $x \in \Omega_\varepsilon=\{ x \in \Omega: d(x) < \varepsilon \}$ there are a unique $z \in \partial \Omega$ for which $d(x)=\abs{x-z}$ and a ball $B_{2\varepsilon}(\bar y) \subset \Omega$ such that $\overline{B_{2\varepsilon}(\bar y)} \cap (\R^N \setminus \Omega)=\{ z \}$.  \\
Now we consider the radial function $w(x)={((2\varepsilon)^2-\abs{x- \bar y}^2)}^s_+$ which satisfies, see  Lemma \ref{funzione barriera}, the equation 
\[ \I_N^- w=-N \, C_s \beta(1-s, s) \, \text{ in } B_{2\varepsilon}(\bar y). \]
We claim that there exists $\bar n= \bar n(u, \varepsilon)$ such that 
\[ u \ge w_{\bar n} \text{ in } \R^N, \]
where 
\[ w_n(x) = \frac 1n w(x). \]
This implies \eqref{eq hopf}. Indeed, for any $x\in\Omega_\varepsilon$
\begin{equation}\label{2406eq1} w_{\bar n} (x)=\frac 1{\bar n}  ((2\varepsilon)^2-\abs{x- \bar y}^2)^s_+ \ge \frac {2\varepsilon}{\bar n} \abs{x-z}^s =\frac {2\varepsilon}{\bar n} d(x)^s, \end{equation}
and
 \begin{equation}\label{2406eq2}
u(x)\geq \min_{y\in\Omega\backslash\Omega_\varepsilon}\frac{u(y)}{d(y)^s}d(x)^s\quad\forall x\in \Omega\backslash\Omega_\varepsilon.
\end{equation}
 From \eqref{2406eq1}-\eqref{2406eq2} we obtain \eqref{eq hopf} with $c=\min\left\{\frac {2\varepsilon}{\bar n} ,\min_{y\in\Omega\backslash\Omega_\varepsilon}\frac{u(y)}{d(y)^s}\right\}$.

We proceed by contradiction in order to prove the claim, hence, we suppose that for any $n \in \mathbb{N}$ 
\[ v_n = w_n-u \]
is USC and positive somewhere. From now on, for simplicity of notation, we assume that $B_{2\varepsilon}(\bar y) =B_1(0)$. 
Since
\[ w_n =0 \le u \text{ in } \R^N \setminus B_1(0), \]
we know that it attains its positive maximum  $x_n$ in $B_1(0) \subset \Omega$. One has 
\[ 0 <u(x_n) < w_n (x_n). \]
Also, $w_n \to 0$ uniformly in $\R^N$, thus
\begin{equation}\label{2406eq3} \lim_{n \to +\infty} u(x_n) =0. \end{equation}
Therefore, recalling \eqref{bound}, $ \abs{x_n} \to 1 $
as $n \to \infty$, hence in particular $x_n \in B_1(0) \setminus B_{r_0}(0)$, where $r_0=\sqrt{1 - \frac 1{2N}}$, and $d(x_n) < (1-r_0)/2$ for $n$ large enough.

Since $\I_N^- u \le 0$ in $\Omega$,  we know that for every test function $\varphi \in C^2(B_\rho(x_n))$ such that $x_n$ is a minimum point to $u-\varphi$, one has
\[ \inf_{\{ \xi_i \} \in \mathcal{V}_N} \sum_{i=1}^N \left( \int_{0}^{\rho} \frac{\delta(\varphi, x_n, \tau \xi_i)}{\tau^{1+2s}} \, d\tau  + \int_{\rho}^{+\infty} \frac{\delta(u, x_n, \tau \xi_i)}{\tau^{1+2s}} \, d\tau  \right) \le 0, \]
and in particular for any $n \in \N$ there exists $\{ \xi_1(n), \dots, \xi_N(n) \}$ orthonormal basis of $\R^N$ such that 
\begin{equation}\label{u supersol hopf} \sum_{i=1}^N \left( \int_{0}^{\rho} \frac{\delta(\varphi, x_n, \tau \xi_i(n))}{\tau^{1+2s}} \, d\tau  + \int_{\rho}^{+\infty}\frac{\delta(u, x_n, \tau \xi_i(n))}{\tau^{1+2s}} \, d\tau  \right) \le \frac 1n. \end{equation}
Since $\{ \xi_1(n), \dots, \xi_N(n) \}$ is a basis of $\R^N$, then there exists at least one $\xi_i(n)$ such that  $\langle \hat x_n, \xi_i(n) \rangle \ge \frac{1}{\sqrt N}$. Without loss of generality we can suppose that $\xi_i(n)=\xi_1(n)$.
Let us choose $\rho = d(x_n) < (1- r_0)/2$, and $\varphi(x)=w_n (x) \in C^2(B_\rho(x_n))$ as test function. 

We consider the left hand side of \eqref{u supersol hopf}, and we aim at providing a positive lower bound independent on $n$, which will give the desired contradiction. 
Let us start with the second integral in \eqref{u supersol hopf} for each fixed $i=2, \dots, N$, and let us notice that since $x_n$ is a maximum point for $v_n$ 
\[ \int_{\rho}^{+\infty}\frac{\delta(u, x_n, \tau \xi_i(n))}{\tau^{1+2s}}\, d\tau \ge \int_{\rho}^{+\infty}\frac{\delta(w_n, x_n, \tau \xi_i(n))}{\tau^{1+2s}}\, d\tau. \]
On the other hand, in order to estimate the integral for $i=1$, we split it as follows:
\begin{equation}\label{int2} \int_{\rho}^{+\infty}\frac{\delta(u, x_n, \tau \xi_1(n))}{\tau^{1+2s}}\, d\tau=J_1+ J_2+J_3, \end{equation}
where  
\[ J_1= \int_{\rho}^{\tau_1(n)}  \frac{\delta(u, x_n, \tau \xi_1(n))}{\tau^{1+2s}} \, d \tau, \]
\[ J_2= \int_{\tau_1(n)}^{\tau_2(n)}  \frac{\delta(u, x_n, \tau \xi_1(n))}{\tau^{1+2s}} d \tau\]
and  
\[ J_3= \int_{\tau_2(n)}^{+\infty}  \frac{\delta(u, x_n, \tau \xi_1(n))}{\tau^{1+2s}}d \tau, \]
with  
\[ \tau_1(n)=\frac{\abs{x_n}}{\sqrt N} - \sqrt{ 1 - \frac{1}{2N} - \abs{x_n}^2\left(1-\frac 1N\right)} \]
and 
\[ \tau_2(n)= \frac{\abs{x_n}}{\sqrt N} + \sqrt{ 1 - \frac{1}{2N} - \abs{x_n}^2\left(1-\frac 1N\right)}. \]

\begin{figure}
\centering
\includegraphics[width=0.5\textwidth]{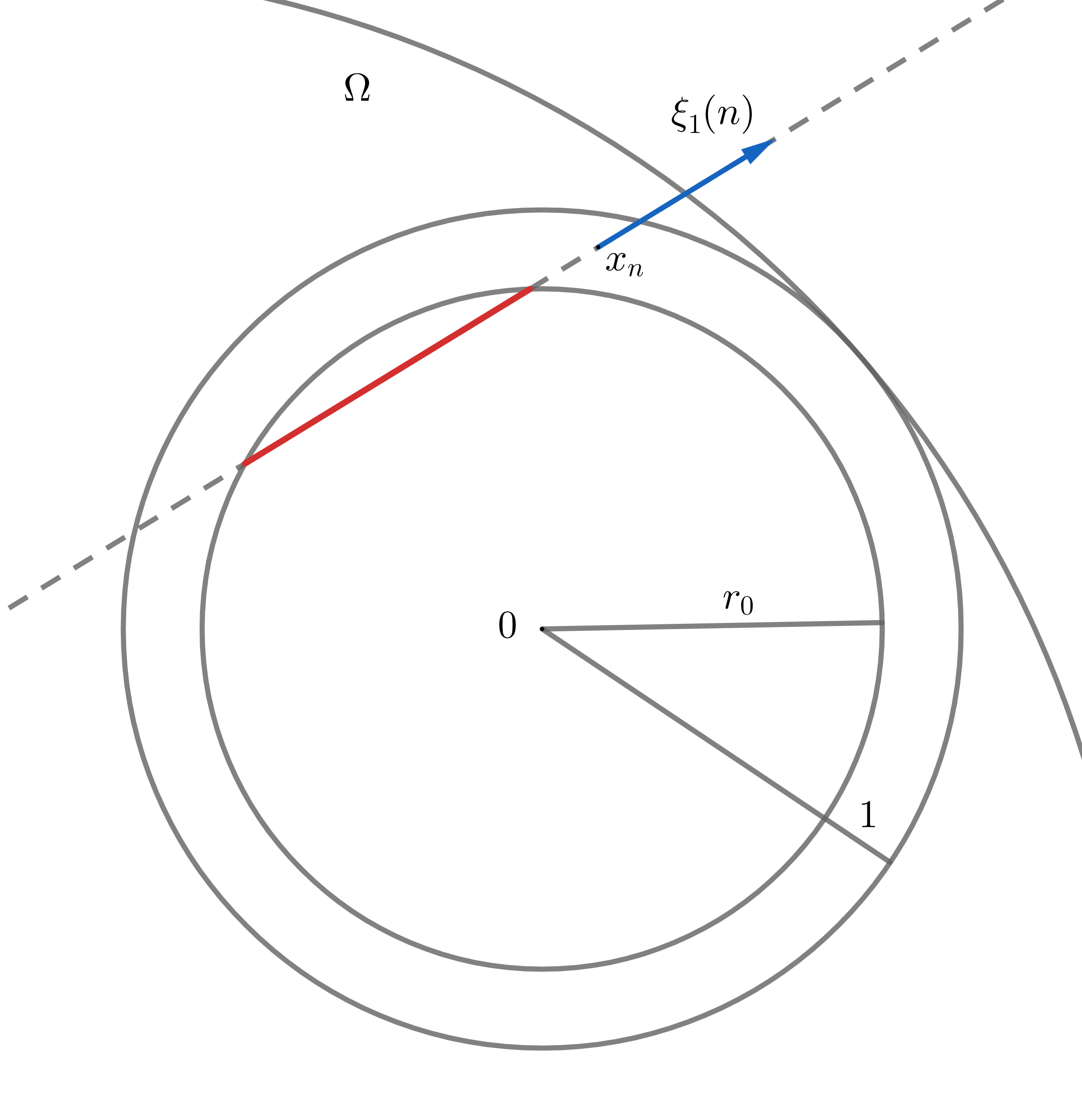}
\caption{The blue vector represents $\xi_1(n)$, and the red segment corresponds to points $x_n -\tau \xi_1(n)$, with $\tau \in [\tau_1(n), \tau_2(n)]$. }
\label{tau}
\end{figure}

Notice that if $\tau \in [\tau_1(n), \tau_2(n)]$ then $x_n - \tau \xi_1(n) \in B_{r_0}(0)$, as 
\[ \abs{x_n -\tau \xi_1(n)}^2 \le \abs{x_n}^2+ \tau^2 - \frac{2 \tau \abs{x_n}}{\sqrt N} \le 1 -\frac{1}{2N}, \] 
see also Figure \ref{tau}. 
Also, for $n$ large we can assume $\rho=d(x_n) < \tau_1(n) < \tau_2(n)$, since as $n\to+\infty$, $d(x_n)\to0$, $\tau_1(n)\to\frac{1}{\sqrt{N}}\left(1-\frac{1}{\sqrt{2}}\right)$ and $\tau_2(n)\to\frac{1}{\sqrt{N}}\left(1+\frac{1}{\sqrt{2}}\right)$.

Integrals $J_1$ and $J_3$ can be estimated once again as above, exploiting the inequality
\[ \delta(u, x_n, \tau \xi_1(n)) \ge \delta(w_n, x_n, \tau \xi_1(n)).\]
In order to estimate $J_2$, we now use the fact that 
$u(x_n - \tau \xi_1 (n))\geq\min_{\overline B_{r_0}} u >0$. 
We obtain
\begin{align*} J_2 & \ge  \int_{\tau_1(n)}^{\tau_2(n)}  \frac{u(x_n - \tau \xi_1(n)) - 2u(x_n)}{\tau^{1+2s}}\,d\tau \\
 &\ge \left( \min_{\overline B_{r_0}} u-2u(x_n)\right)\,  \int_{\tau_1(n)}^{\tau_2(n)}  \frac{1}{\tau^{1+2s}}= \frac {\min_{\overline B_{r_0}} u-2u(x_n)}{2s} \, \left( \frac{1}{\tau_1(n)^{2s}} - \frac{1}{\tau_2(n)^{2s}} \right). 
\end{align*}

Now, putting estimates above together and recalling \eqref{u supersol hopf}, one has
\begin{equation}\label{eq:hopf} \begin{split} \frac 1n \ge & \sum_{i=1}^N \int_0^{+\infty}  \frac{\delta(w_n, x_n, \tau \xi_i(n))}{\tau^{1+2s}}\,d\tau  - \int_{\tau_1(n)}^{\tau_2(n)}  \frac{\delta(w_n, x_n, \tau \xi_1(n))}{\tau^{1+2s}}\,d\tau  \\
&\hspace{3cm}+ \frac {\min_{\overline B_{r_0}} u-2u(x_n)}{2s} \, \left( \frac{1}{\tau_1(n)^{2s}} - \frac{1}{\tau_2(n)^{2s}} \right)\,.\end{split}\end{equation}

Notice that, as $n\to+\infty$
\[ \abs{\int_{\tau_1(n)}^{\tau_2(n)}  \frac{\delta(w_n, x_n, \tau \xi_1(n))}{\tau^{1+2s}} \, d \tau} 
\le \frac{2}{s \, n} \left( \frac{1}{\tau_1(n)^{2s}} - \frac{1}{\tau_2(n)^{2s}} \right) \to 0, \]
and that by Lemma \ref{funzione barriera}
\[ \sum_{i=1}^N \int_0^{+\infty}  \frac{\delta(w_n, x_n, \tau \xi_i(n))}{\tau^{1+2s}} \, d \tau = - \frac{N}{n} C_s \beta(1-s, s). \]
Thus by taking the limit $n \to +\infty$ in \eqref{eq:hopf} and using \eqref{2406eq3} we get the contradiction
\[ 0 < \frac 1{2s} \min_{\overline B_{r_0}} u \, \left( \left( \frac{1}{\sqrt N} \left( 1 - \frac{1}{\sqrt 2} \right) \right)^{-2s} -  \left( \frac{1}{\sqrt N} \left( 1 +\frac{1}{\sqrt 2} \right) \right)^{-2s} \right) \le 0 \,. \qedhere\] 
\end{proof}

\section{Stability and the Perron method}\label{sec:Perron}
We now give some stability results which will be crucial for our purposes. They have been treated in a very general context in \cite{BCI, BarlesImbert}, see also \cite{AlvarezTourin}, here we give a simplified proof with full details for the operators $\I_k^\pm$. 

For the local counterparts, we refer to \cite{CIL}. 
Let us set 
\[ u_* (x)= \sup_{r>0} \inf_{\abs{y-x} \le r} u(y), \quad u^*(x) = \inf_{r>0} \sup_{\abs{y-x} \le r} u(y) \]
and 
\[ {\liminf}_* u_n(x)=\lim_{j \to \infty} \inf \left\{u_n(y): n \ge j, \, \abs{y-x} \le \frac1j \right\} , \]
\[ {\limsup}^* u_n (x)= \lim_{j \to \infty} \sup \left\{u_n(y): n \ge j, \, \abs{y-x} \le \frac1j \right\} .\]

\begin{lemma}\label{stability1}
Let $u_n \in USC(\Omega)$ (respectively $LSC(\Omega)$) be a sequence of subsolutions (supersolutions) of
\begin{equation}\label{limit j} \I_k^\pm u_n = f_n(x)  \text{ in } \Omega,  \end{equation}
where $f_n$ are locally uniformly bounded functions, 
and $u_n \le 0$ ($u_n \ge 0$) in $\R^N \setminus \Omega$. We assume that there exists $M>0$ such that  for any $n \in \N$ 
\begin{equation}\label{bound uj} \norm{u_n}_\infty \le M \text{ in } \R^N. \end{equation}
Then $\overline u:= {\limsup}^* u_n$ (resp. $\underline u :={\liminf}_* u_n$) is a subsolution (resp. supersolution) of 
\[ \I_k^\pm u = f(x) \text{ in } \Omega,  \]
such that $u \le 0$ ($u \ge 0$)  in $\R^N \setminus \overline \Omega$, where $\underline f=\liminf_* f_n$ (resp. $\overline f=\limsup^* f_n$).
\end{lemma}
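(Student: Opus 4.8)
The plan is to prove the subsolution statement for $\I_k^+$ (the arguments for $\I_k^-$ and the supersolution case being symmetric), using the standard half-relaxed limits machinery adapted to the nonlocal test-function formulation available in this paper. First I would record the preliminary facts: $\overline u := {\limsup}^* u_n$ is $USC(\overline\Omega)$ and bounded by $M$ on $\R^N$ by \eqref{bound uj}, so it is an admissible candidate; and since $u_n \le 0$ in $\R^N\setminus\Omega$ we get $\overline u \le 0$ in $\R^N \setminus \overline\Omega$ directly from the definition of ${\limsup}^*$. Similarly $\underline f = {\liminf}_* f_n$ is well-defined and finite because the $f_n$ are locally uniformly bounded.

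Next I would fix $x_0 \in \Omega$ and, by \autoref{test}, a test function $\varphi \in C^2(B_\rho(x_0))$ with $\overline u(x_0) = \varphi(x_0)$ and $\overline u < \varphi$ on $B_\rho(x_0)\setminus\{x_0\}$ (shrinking $\rho$ so that $\overline{B_\rho(x_0)} \subset \Omega$); I must show
\[
\mathcal{I}(\overline u, \varphi, x_0, \rho) + \text{(no zero-order term here)} \ge \underline f(x_0),
\]
i.e. $\mathcal{I}(\overline u, \varphi, x_0, \rho) \ge \underline f(x_0)$. The classical step is to produce points $x_n \to x_0$ with $u_n(x_n) \to \overline u(x_0)$ and such that $x_n$ is a maximum of $u_n - \varphi$ on $\overline{B_\rho(x_0)}$: one takes a maximizer $x_n$ of $u_n - \varphi$ over $\overline{B_\rho(x_0)}$, and a standard argument (using that $\overline u - \varphi$ has a strict maximum at $x_0$ over that ball, together with the definition of ${\limsup}^*$) shows, along a subsequence, $x_n \to x_0$ and $u_n(x_n) \to \overline u(x_0)$. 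Then $\varphi + (u_n(x_n) - \varphi(x_n))$ touches $u_n$ from above at $x_n$ in $B_{\rho'}(x_n)$ for suitable $\rho' < \rho$, so from the subsolution property of $u_n$,
\[
\mathcal{I}(u_n, \varphi, x_n, \rho') \ge f_n(x_n),
\]
after absorbing the constant shift which does not affect $\delta$.

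The core of the proof is then to pass to the limit in this inequality. For the right-hand side, $\liminf_n f_n(x_n) \ge \underline f(x_0)$ by definition of ${\liminf}_*$. For the left-hand side I would argue that $\limsup_n \mathcal{I}(u_n, \varphi, x_n, \rho') \le \mathcal{I}(\overline u, \varphi, x_0, \rho)$: the inner-ball part $\int_0^{\rho'} \delta(\varphi, x_n, \tau\xi_i)/\tau^{1+2s}\,d\tau$ converges uniformly in $\{\xi_i\}$ to the $x_0$-version by $C^2$-regularity of $\varphi$ and a second-order Taylor expansion (as in the proof of \autoref{semicont}); for the outer part I use that $u_n(x_n + \tau\xi) \le \varphi_n(\cdot)$-type bounds give $\delta(u_n, x_n, \tau\xi_i) \le (\text{something integrable})$ uniformly, hence a reverse Fatou lemma applies with the dominating function $4M/\tau^{1+2s} \in L^1(\rho',\infty)$, yielding $\limsup_n \delta(u_n,x_n,\tau\xi_i) \le \delta(\overline u, x_0,\tau\xi_i)$ pointwise in $\tau$ from the definition of ${\limsup}^*$ (noting $u_n(x_n)\to\overline u(x_0)$), and then interchange with the supremum over frames using compactness of $\mathcal{V}_k$ and the semicontinuity established in \autoref{semicont}-style arguments. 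Combining, $\mathcal{I}(\overline u, \varphi, x_0, \rho') \ge \underline f(x_0)$, and since verifying the inequality at $\rho'$ implies it at $\rho$ (the Remark after \autoref{test}), we are done.

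The main obstacle I anticipate is the interchange of $\limsup_n$ with $\sup_{\{\xi_i\}\in\mathcal{V}_k}$ in the nonlocal term: unlike the pointwise $\delta$ estimate, controlling $\sup_i$ over the frame requires either extracting a convergent sequence of optimal (or near-optimal, up to $1/n$) frames $\xi_i(n) \to \bar\xi_i$ and then showing the tail integrals behave well under this double limit, or a semicontinuity argument for the map $(x,\xi) \mapsto$ outer integral. This is exactly the kind of global-regularity subtlety the paper has been emphasizing, so some care is needed; the bounded-by-$M$ hypothesis \eqref{bound uj} is what makes the reverse-Fatou domination work and is presumably why it is assumed. Everything else — the touching-point construction, the Taylor estimate on the inner ball, the $\rho$-monotonicity — is routine and parallels \autoref{semicont} and \autoref{comparison}.
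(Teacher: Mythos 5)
Your proposal is correct and follows essentially the same route as the paper's proof: touching points $x_n \to x_0$ with $u_n(x_n) \to \overline u(x_0)$, extraction of near-optimal frames $\xi_i(n) \to \bar\xi_i$ by compactness of $\mathcal{V}_k$, reverse Fatou on the tail integrals using the uniform bound $M$, and passage from $\rho/2$ to $\rho$ via $\varphi \ge \overline u$ on the annulus. The subtlety you flag --- interchanging $\limsup_n$ with $\sup_{\{\xi_i\}\in\mathcal{V}_k}$ --- is resolved in the paper exactly as you propose, by picking frames within $1/n$ of the supremum and passing to a convergent subsequence.
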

\begin{remark}
Notice that in general we cannot guarantee that the limit solution $\overline u$ is $\le 0$ also on the boundary of the domain $\Omega$. However, in our next results, we will always be able to avoid this difficulty, by comparing the limit solution with the distance function to the boundary, see also Lemma \ref{barrier} below. 
\end{remark}
\begin{proof}
Let us only consider $\I_k^+$, for $\I_k^-$ is analogous. 
Let us fix $x_0 \in \Omega$, and let us choose $\Phi \in C^2(B_\rho(x_0))$ such that $\Phi(x_0)=\overline u (x_0)$, and $\Phi > \overline u$ in $B_\rho(x_0) \setminus \{ x_0 \}$. We can choose $x_n \to x_0$ such that up to a subsequence $u_n-\Phi$ has a maximum in $x_n$ in $\overline B_{\rho/2}(x_n)$, and $\overline u(x_0)=\lim_n u_n(x_n)$. 
Since $u_n$ are subsolutions, there exist $\{ \xi_i(n) \} \in \mathcal{V}_k$ such that 
\begin{equation}\label{eq:stability} f_n(x_n) - \frac 1n \le \sum_{i=1}^k \left( \int_{0}^{\rho/2} \frac{\delta(\Phi, x_n, \tau \xi_i(n))}{\tau^{1+2s}} \, d\tau + \int_{\rho/2}^{+\infty} \frac{\delta(u_n, x_n, \tau \xi_i(n))}{\tau^{1+2s}}\, d\tau  \right) \end{equation}
Up to extracting a further subsequence, we can assume $\xi_i(n) \to \bar \xi_i$ as $n \to \infty$. Then, recalling $\Phi \in C^2(B_\rho(x_0))$, 
\[ 
\lim_{n\to +\infty} \int_{0}^{\rho/2} \frac{\delta(\Phi, x_n, \tau \xi_i(n))}{\tau^{1+2s}}\, d\tau = \int_{0}^{\rho/2} \frac{\delta(\Phi, x_0, \tau \bar \xi_i)}{\tau^{1+2s}} \, d\tau.  \] 
On the other hand, by applying Fatou Lemma, and using hypothesis \eqref{bound uj}, 
\[ \limsup_{n\to +\infty} \int_{\rho/2}^{+\infty} \frac{\delta(u_n, x_n, \tau \xi_i(n))}{\tau^{1+2s}} \, d\tau \le \int_{\rho/2}^{+\infty} \frac{\delta(\overline u, x_0, \tau \bar \xi_i)}{\tau^{1+2s}} \, d\tau \]
Thus, recalling \eqref{eq:stability}, passing to the limit, and also using that $\Phi \ge \overline u$ in $B_\rho(x_0)$, 
\[ \begin{split} \underline f(x_0) &\le \sum_{i=1}^k\left(  \int_{0}^{\rho/2} \frac{\delta(\Phi, x_0, \tau \bar \xi_i)}{\tau^{1+2s}}\, d\tau+ \int_{\rho/2}^{+\infty} \frac{\delta(\overline u, x_0, \tau \bar \xi_i)}{\tau^{1+2s}}\, d\tau \right) \\
&\le \sum_{i=1}^k\left(  \int_{0}^{\rho} \frac{\delta(\Phi, x_0, \tau \bar \xi_i)}{\tau^{1+2s}}\, d\tau+ \int_{\rho}^{+\infty} \frac{\delta(\overline u, x_0, \tau \bar \xi_i)}{\tau^{1+2s}} \, d\tau\right) \end{split} \]
which implies the conclusion. 
\end{proof}

Analogously one proves 
\begin{lemma}\label{stability2}
Let $(u_\alpha)_\alpha \subseteq USC(\Omega)$ (respectively $LSC(\Omega)$) a family of subsolutions (supersolutions) of 
\[ \I_k^\pm u_\alpha = f_\alpha(x)\text{ in } \Omega  \]
such that 
$u_\alpha \le 0$ ($u_\alpha \ge 0$)  in $\R^N \setminus \Omega$, and  there exists $M>0$ such that for any $\alpha $
\[ \norm{u_\alpha}_\infty \le M \text{ in } \R^N,  \]
where $f_\alpha$ are uniformly bounded. 
Set $u=\sup_\alpha u_\alpha$ (resp. $v=\inf_{\alpha} u_\alpha$). Then $u^*$ (resp $v_*$) is a subsolution (resp supersolution) of 
\[ \I_k^\pm u = f(x) \text{ in } \Omega  \]
such that $u \le 0$ ($u \ge 0$)  in $\R^N \setminus \Omega$, 
where $f=(\inf_\alpha f_\alpha)_*$ (resp. $f=(\sup_\alpha f_\alpha)^*$).
\end{lemma}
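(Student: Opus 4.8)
The plan is to mimic the proof of Lemma \ref{stability1} essentially verbatim, replacing the sequential $\limsup^*$ construction with the upper semicontinuous envelope of a supremum, and exploiting the fact that a supremum of subsolutions is again (once we take the $*$-envelope) a subsolution at points where a smooth test function touches from above. I would only write the subsolution case for $\I_k^+$ with $u=\sup_\alpha u_\alpha$; the supersolution case for $\I_k^-$ and $v=\inf_\alpha u_\alpha$ follows by the symmetry $\I_k^+(-u)=-\I_k^- u$ together with the identity $(\inf_\alpha(-u_\alpha))_*=-(\sup_\alpha u_\alpha)^*$, so I would just say ``analogously''.

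First I would fix $x_0\in\Omega$ and a test function $\Phi\in C^2(B_\rho(x_0))$ with $\Phi(x_0)=u^*(x_0)$ and $\Phi>u^*$ on $B_\rho(x_0)\setminus\{x_0\}$ (this normalization is harmless by Remark \ref{test}). By definition of $u^*$ as the envelope of the supremum, I can extract a sequence $\alpha_n$ and points $x_n\to x_0$ such that $u_{\alpha_n}(x_n)\to u^*(x_0)$; moreover, since $\Phi-u^*$ has a strict minimum at $x_0$ and $\Phi\ge u^*\ge u_{\alpha_n}$ near $x_0$ in the limit, a standard argument (exactly as in Lemma \ref{stability1}) shows that, after relabelling, $u_{\alpha_n}-\Phi$ attains a maximum over $\overline B_{\rho/2}(x_n)$ at some interior point, which I may as well call $x_n$ again. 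Then $\Phi$ (shifted by the value gap, which tends to $0$) is an admissible test function for $u_{\alpha_n}$ at $x_n$, so there exist frames $\{\xi_i(n)\}\in\mathcal V_k$ with
\[ f_{\alpha_n}(x_n)-\frac1n\le \sum_{i=1}^k\left(\int_0^{\rho/2}\frac{\delta(\Phi,x_n,\tau\xi_i(n))}{\tau^{1+2s}}\,d\tau+\int_{\rho/2}^{+\infty}\frac{\delta(u_{\alpha_n},x_n,\tau\xi_i(n))}{\tau^{1+2s}}\,d\tau\right). \]

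Next I would pass to the limit $n\to\infty$. After extracting a further subsequence so that $\xi_i(n)\to\bar\xi_i$, the inner integral over $[0,\rho/2)$ converges by the $C^2$ regularity of $\Phi$ and dominated convergence (the integrand is controlled by $\|D^2\Phi\|_\infty\tau^{1-2s}$); the tail integral is handled by Fatou's lemma together with the uniform bound $\|u_\alpha\|_\infty\le M$, giving $\limsup_n\int_{\rho/2}^\infty\frac{\delta(u_{\alpha_n},x_n,\tau\xi_i(n))}{\tau^{1+2s}}\,d\tau\le\int_{\rho/2}^\infty\frac{\delta(u^*,x_0,\tau\bar\xi_i)}{\tau^{1+2s}}\,d\tau$, using $\limsup^* u_{\alpha_n}\le u^*$ at the relevant points. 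On the left, $\liminf_n f_{\alpha_n}(x_n)\ge (\inf_\alpha f_\alpha)_*(x_0)=f(x_0)$. Using $\Phi\ge u^*$ on $B_\rho(x_0)$ to replace the splitting at $\rho/2$ by a splitting at $\rho$, one obtains $f(x_0)\le \mathcal I(u^*,\Phi,x_0,\rho)$, which is the subsolution inequality. Finally, $u^*\le 0$ in $\R^N\setminus\Omega$ is immediate because each $u_\alpha\le 0$ there, hence $\sup_\alpha u_\alpha\le 0$ and its $*$-envelope is still $\le 0$ on the open set $\R^N\setminus\overline\Omega$.

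The only genuinely delicate point — and hence the main obstacle — is the extraction step: justifying that one can choose $\alpha_n$ and interior maximum points $x_n$ of $u_{\alpha_n}-\Phi$ converging to $x_0$ with $u_{\alpha_n}(x_n)\to u^*(x_0)$. This is the same lemma used in Lemma \ref{stability1}; I would phrase it carefully using the strict-touching normalization so that the maxima cannot escape to the boundary $\partial B_{\rho/2}(x_n)$, and would reuse it here without reproving it, or reprove it in one line. Everything else is a routine transcription of the Lemma \ref{stability1} argument, so I would keep it brief and end with ``the details being analogous to those of Lemma \ref{stability1}''.
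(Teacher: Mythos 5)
Your proposal is correct and is exactly the route the paper intends: the paper gives no separate proof of Lemma \ref{stability2}, stating only that it is proved ``analogously'' to Lemma \ref{stability1}, and your argument is precisely that transcription, with the sequential $\limsup^*$ replaced by the standard extraction of indices $\alpha_n$ and interior maximum points $x_n$ of $u_{\alpha_n}-\Phi$ converging to $x_0$ with $u_{\alpha_n}(x_n)\to u^*(x_0)$. The extraction step you flag is indeed the only point needing care (strict touching forces the maxima into the interior), and the remaining limit passage (dominated convergence on $[0,\rho/2)$, reverse Fatou with the uniform bound $M$ on the tail, and $\liminf_n f_{\alpha_n}(x_n)\ge(\inf_\alpha f_\alpha)_*(x_0)$) is handled correctly.
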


As a consequence, we get the following analog of the Perron method.
\begin{lemma}\label{perron}
Let $\underline u$ and $\overline u$ in $C(\R^N)$ be respectively sub and supersolutions of 
\begin{equation}\label{eq perron}
\I_k^\pm u =f(x) \text{ in } \Omega,
\end{equation}
such that $\underline u= \overline u=0$ in $\R^N \setminus \Omega$. Then there exists a solution $v \in C(\R^N)$ to \eqref{eq perron} such that $\underline u  \le v \le \overline u$, and $v=0$ in $\R^N \setminus \Omega$. 
\end{lemma}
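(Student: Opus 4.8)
The plan is to run the classical Perron construction adapted to this nonlocal, non-continuous setting, using the stability lemmas just proved as the engine. Define
\[
\mathcal{S} = \{ w \in USC(\R^N) : w \text{ is a subsolution of } \eqref{eq perron}, \ \underline u \le w \le \overline u \text{ in } \R^N \},
\]
which is nonempty since $\underline u \in \mathcal{S}$, and set $v := \sup_{w \in \mathcal{S}} w$. Every $w \in \mathcal{S}$ satisfies $w \le 0$ in $\R^N \setminus \Omega$ (in fact $w = 0$ there, being squeezed between $\underline u$ and $\overline u$), and the family is uniformly bounded by $\max(\norm{\underline u}_\infty, \norm{\overline u}_\infty)$. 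By Lemma \ref{stability2} applied to this family (here $f_\alpha \equiv f$ is trivially uniformly bounded, so $f = f_* = f$ since $f \in C(\Omega)$), the upper semicontinuous envelope $v^*$ is again a subsolution of \eqref{eq perron} with $v^* \le 0$ in $\R^N \setminus \Omega$; moreover $\underline u \le v \le v^* \le \overline u$ because $\overline u$ is continuous and $\ge$ every $w$, so $v^* \le (\overline u)^* = \overline u$, and similarly $v^* = 0$ outside $\Omega$. Hence $v^* \in \mathcal{S}$, which forces $v = v^*$, so $v$ itself is an upper semicontinuous subsolution.

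The heart of the argument is to show $v_*$ (the lower semicontinuous envelope of $v$) is a supersolution. Suppose not: there is $x_0 \in \Omega$ and $\varphi \in C^2(B_\rho(x_0))$ with $v_* - \varphi$ having a strict minimum $0$ at $x_0$, yet
\[
C_s \sup_{\{\xi_i\} \in \mathcal{V}_k} \sum_{i=1}^k \left( \int_0^\rho \frac{\delta(\varphi, x_0, \tau \xi_i)}{\tau^{1+2s}}\, d\tau + \int_\rho^{+\infty} \frac{\delta(v_*, x_0, \tau \xi_i)}{\tau^{1+2s}}\, d\tau \right) > f(x_0).
\]
First one checks $v_*(x_0) < \overline u(x_0)$: if $v_*(x_0) = \overline u(x_0)$ then, since $\overline u$ is a continuous supersolution and $v_* \le \overline u$ with equality at $x_0$, the supersolution inequality for $\overline u$ at $x_0$ (tested against $\varphi$ suitably modified, using the ellipticity remark) would contradict the displayed strict inequality. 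Next, as in the classical Perron method, one perturbs $\varphi$ downward to build a $C^2$ bump $\psi = \varphi + \eta - c|x - x_0|^2$ that still satisfies the subsolution inequality in a small ball $B_r(x_0)$ (by continuity of $x \mapsto \mathcal I_\xi \psi(x)$ in the domain for the smooth test part, and taking $r, \eta$ small), with $\psi < v_*$ near $\partial B_r(x_0)$ and $\psi(x_0') > v(x_0')$ for some $x_0'$ near $x_0$; here one must be slightly careful because of the possible discontinuity of $\mathcal{I}_k^\pm$, so the bump must be compared using the definition directly rather than pointwise evaluation of the operator. Setting $V := \max(v, \psi)$ on $B_r(x_0)$ and $V := v$ elsewhere, Lemma \ref{stability2} (applied to the pair $\{v, \psi \chi_{B_r}\}$, after extending $\psi$ by $-\infty$ or by $v$) gives that $V^*$ is a subsolution; since $V \ge \underline u$, $V \le \overline u$ (using $\psi < v_* \le \overline u$ at the relevant points and the strict minimum), and $V = 0$ outside $\Omega$, we get $V^* \in \mathcal{S}$, yet $V^*(x_0') \ge \psi(x_0') > v(x_0')$, contradicting maximality of $v$.

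With $v$ a subsolution and $v_*$ a supersolution satisfying $v_* \le v$, the comparison principle Theorem \ref{comparison} (here with $c \equiv 0$, which trivially meets the smallness condition) applied to the subsolution $v$ and the supersolution $v_*$ — both vanishing outside $\Omega$, hence with $v \le v_*$ on $\R^N \setminus \Omega$ — yields $v \le v_*$ in $\Omega$. Combined with $v_* \le v$ always, we get $v = v_*$, so $v$ is continuous, and being both a sub- and supersolution, $v$ is a solution of \eqref{eq perron}. Finally $v = 0$ in $\R^N \setminus \Omega$ by construction, and $\underline u \le v \le \overline u$ throughout, completing the proof.

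The main obstacle, as flagged, is the bump-function step: in the classical Crandall–Ishii–Lions setting one freely uses continuity of the operator applied to $C^{1,1}$ functions, but here $x \mapsto \mathcal I_k^\pm \psi(x)$ need not be continuous even for smooth bounded $\psi$ (Proposition \ref{semicont} shows continuity requires a global regularity hypothesis), so the standard "the perturbed test function is still a subsolution in a neighborhood" argument must be re-derived working directly from the viscosity definition — splitting the singular integral at radius $\rho$, keeping the smooth part (which is genuinely continuous in $x$) and controlling the tail via the uniform bound and the fact that $\psi$ differs from $v$ only on a small ball. Getting these estimates uniform as the perturbation parameters shrink is where the care lies; the rest is bookkeeping with the envelopes and two invocations of the stability lemma plus one of comparison.
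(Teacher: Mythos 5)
Your architecture is the same as the paper's (sup of subsolutions bounded by $\overline u$, Lemma \ref{stability2} to get $v=v^*$ a subsolution, a bump argument to show $v_*$ is a supersolution, comparison to conclude $v=v_*=v^*$), but the proof has a genuine gap exactly at the step you yourself flag as ``where the care lies'': you never actually establish that the perturbed test function satisfies the subsolution inequality on a neighborhood of $x_0$. Appealing to ``continuity of $x\mapsto \I_\xi\psi(x)$ for the smooth test part'' does not suffice, because the quantity that must be bounded below near $x_0$ involves the tail integral of the \emph{glued} function (test function inside $B_\rho(x_0)$, $v_*$ outside), and Section \ref{sec:continuity} shows this need not be continuous. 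The paper's resolution is to define $\Psi=\Phi$ on $\overline B_\rho(x_0)$ and $\Psi=v_*$ outside; since $\Psi$ is then globally lower semicontinuous and $C^2$ near $x_0$, Proposition \ref{semicont} gives that $\I_k^+\Psi$ is \emph{lower semicontinuous}, which is precisely the right direction to propagate the strict inequality $\I_k^+\Psi(x_0)>f(x_0)$ to a ball $B_r(x_0)$ with a margin $\varepsilon_0$. The upward shift is then taken to be a constant $\eta$ on $\overline B_\rho(x_0)$ (i.e. $\Psi_\eta=\Psi+\eta\chi_{\overline B_\rho(x_0)}$), and its cost is computed explicitly: $\I_\xi\chi_{\overline B_\rho(x_0)}(x)\ge -\frac{C_s}{s}(\rho/2)^{-2s}$ for $x\in B_r(x_0)$, so $\eta$ small compared with $\varepsilon_0$ keeps $\I_k^+\Psi_\eta\ge f$ in $B_r(x_0)$. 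Without some such quantitative control, your bump step does not go through.

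A second, related problem: your invocation of Lemma \ref{stability2} to conclude that $V^*=(\max(v,\psi))^*$ is a subsolution is not legitimate, since that lemma requires every member of the family to be a subsolution on all of $\Omega$, whereas the bump is a subsolution only on the small ball (and extending it by $-\infty$ or by $v$ does not produce an admissible family member). The paper instead verifies directly that $w=\max\{v,\Psi_{\eta_0}\}$ on $B_r(x_0)$ (and $=v$ elsewhere) is a subsolution, by a case analysis on where the max is attained: when $w(\bar x)=v(\bar x)$ the test function passes to $v$; when $w(\bar x)=\Phi(\bar x)+\eta_0>v(\bar x)$ one glues the test function with $w$ to obtain $\theta\ge\Psi_{\eta_0}$ with equality at $\bar x$, and the ellipticity/monotonicity remark gives $\I_k^+\theta(\bar x)\ge\I_k^+\Psi_{\eta_0}(\bar x)\ge f(\bar x)$. (The choice $\eta_0<\min_{\overline B_\rho(x_0)\setminus B_{r/2}(x_0)}(v_*-\Phi)$ confines the second case to $B_{r/2}(x_0)$, where the pointwise inequality for $\Psi_{\eta_0}$ is available.) Your preliminary check that $v_*(x_0)<\overline u(x_0)$ is a workable alternative to the paper's final step (which instead deduces $w\le\overline u$ a posteriori by comparison), but it does not repair the two issues above.
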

\begin{proof}
In what follows we only consider the case $\I_k^+$, similar considerations hold for $\I_k^-$.
Let 
\[ v= \sup \{ u: \, u \text{ is a subsolution to \eqref{eq perron} s.t. } u \le \overline u \text{ in } \R^N\}. \]
Notice that $v \in L^\infty(\R^N)$ as
\[ \underline u \le v_* \le v \le v^* \le \overline u, \]
which also implies $v=0$ in $\R^N \setminus \Omega$. 
We know by Lemma \ref{stability2} that $v^*$ is a subsolution to \eqref{eq perron}, thus $v^* \le v$ by maximality of $v$ and $v=v^*$. We claim that $v_*$ is a supersolution to \eqref{eq perron}. If the claim is true, then by the comparison principle Theorem \ref{comparison} we conclude $v^* \le v_*$, and since the other inequality trivially holds, then $v=v_*=v^* \in C(\R^N)$ is a solution to \eqref{eq perron} such that $v=0$ in $\R^N \setminus \Omega$. 

We now prove the claim. Let us assume by contradiction that $v_*$ is not a supersolution. Then, there exists $x_0 \in \Omega$, $\rho >0$ and $\Phi \in C^2(\overline{B_\rho(x_0)})$ such that $\Phi(x_0)=v_*(x_0)$, $\Phi < v_*$ in $\overline{B_\rho(x_0)} \setminus \{ x_0 \}$, and
\begin{equation}\label{hp assurdo} 
\I_k^+ \Psi(x_0) > f(x_0), \end{equation}
where $\Psi \in LSC(\R^N) \cap L^\infty(\R^N) \cap C^2(B_\rho(x_0))$ is defined as 
\[ \Psi(x)=
\begin{cases}
\Phi(x) & \text{ if } x \in \overline B_\rho(x_0) \\
v_*(x) & \text{ if } x \in \R^N \setminus  \overline B_\rho(x_0). 
\end{cases} \]

By Proposition \ref{semicont}, there exist $r < \rho/2$ and $\varepsilon_0 >0$ such that
\begin{equation}\label{perron dis2} \I_k^+ \Psi(x) \ge  f(x)+\varepsilon_0 \end{equation}
for any $x \in B_r(x_0)$. 
Moreover, for any $\eta >0$ let 
\[ \Psi_{\eta}(x)=
\begin{cases}
\Phi(x) +\eta & \text{ if } x \in \overline B_\rho(x_0) \\
v_*(x) & \text{ if } x \in \R^N \setminus  \overline B_\rho(x_0). 
\end{cases} \]
Then, 
\begin{equation}\label{perron dis} \I_k^+ \Psi_{\eta}(x) \ge  f(x) \end{equation}
for any $\eta < \eta_1=\varepsilon_0 C_s^{-1}  \frac s k \left( \frac \rho 2 \right)^{2s}$ and for any $x \in B_r(x_0)$. 
Indeed, notice that $\Psi_{\eta} = \Psi + \eta \chi_{\overline B_\rho(x_0)}$, where $\chi_A$ is the characteristic function of the set $A$,
and that for any $\abs{\xi}=1$ and $x \in B_r (x_0)$, $x \pm \tau \xi \in B_\rho(x_0)$ if $\tau < \rho-r$. Thus, 
by direct computations 
\begin{align*} \I_\xi \chi_{\overline B_\rho(x_0)} (x) & = C_s \int_{\rho-r}^{+\infty} \frac{\delta(\chi_{\overline B_\rho(x_0)} , x, \tau \xi)}{\tau^{1+2s}} \, d \tau \ge -2 C_s \int_{\rho-r}^{+\infty}  \frac 1{\tau^{1+2s}}  \, d \tau \\
&=  -\frac{C_s }s (\rho- r)^{-2s} 
\ge -\frac{C_s }s \left( \frac \rho 2 \right)^{-2s}. \end{align*}
Thus
\[ \I_k^+ \Psi_{\eta}(x) \ge \I_k^+ \Psi(x) - C_s \frac ks \left( \frac \rho 2 \right)^{-2s} \eta \ge f(x) + \varepsilon_0-C_s \frac ks \left( \frac \rho 2 \right)^{-2s}\eta \ge f(x) \] 
by using \eqref{perron dis2}. 

Let us take 
\[ \eta_2= \min_{\overline B_{\rho}(x_0) \setminus B_{r/2}(x_0)} (v_* - \Phi)>0, \]
so that $v_* > \Phi+\eta$ in $\overline B_{\rho}(x_0) \setminus B_{r/2}(x_0)$ for any $\eta < \eta_2$. 

Consider 
\[ \eta_0\le\min \{\eta_1, \eta_2 \}. \]

Define
\[ w= \begin{cases}
\max \{ v, \Psi_{\eta_0} \} &\text{ in } B_{r}(x_0)\\
v &\text{ in } \R^N \setminus B_{r}(x_0). 
\end{cases} \]
In particular,  $w(x) \ge \Psi_{\eta_0}(x) $ for all $x$. 

Let us prove that $w$ is a subsolution. 
Let us fix $\bar x \in \Omega$, and let us choose $\varphi \in C^2(B_\varepsilon(\bar x))$ such that $w(\bar x)=\varphi(\bar x)$, and $w(x) \le \varphi(x)$ in $B_\varepsilon(\bar x)$. 

If $w(\bar x )=v(\bar x)$, then $\varphi$ is a test function for $v$, and we exploit the fact that $v$ is a subsolution.
If $w(\bar x)=\Phi(\bar x)+ \eta_0> v(\bar x)$, then  in particular $\bar x \in B_{r/2}(x_0)$. Set
\[ \theta(x)=
\begin{cases}
\varphi(x) &\text{ if } x \in B_\varepsilon(\bar x) \\
w(x) &\text{ if } x \in \R^N \setminus B_\varepsilon(\bar x).
\end{cases} \]
One has
\[ \theta(\bar x)=\varphi(\bar x)=w(\bar x)=\Phi(\bar x)+\eta_0=\Psi_{\eta_0}(\bar x).  \]
Also, $\theta(x)\ge\Psi_{\eta_0}(x)$ for any $x$. Indeed, if $x \in B_\varepsilon(\bar x)$, then $\theta(x) =\varphi(x) \ge w(x) \ge \Psi_{\eta_0}(x)$, whereas if $x \not \in B_\varepsilon(\bar x)$, then $\theta(x)=w(x) \ge  \Psi_{\eta_0}(x)$. Therefore,
\[ \I_k^+ \theta(\bar x) \ge \I_k^+ \Psi_{\eta_0} (\bar x) \ge f(\bar x) \]
by \eqref{perron dis}. 

Hence $w$ is a subsolution, and this yields a contradiction. Indeed, 
there exists a sequence $x_n \to x_0$ such that $\lim_{n\to \infty} v(x_n)=v_*(x_0)$, and one has
\[ \lim_n (w(x_n)-v(x_n) )= \max \{ v_*(x_0), \Phi(x_0)+\eta_0 \} - v_*(x_0) = \eta_0 >0. \]
Thus, $w(x) > v(x)$ for some $x$. Finally, we notice that $w \le \overline u$ by comparison, and as a consequence $w \le v$ by maximality of $v$, a contradiction. 
\end{proof}

We finally prove existence of a unique solution to the Dirichlet problem in uniformly convex domains
$$
\Omega=\bigcap_{y\in Y}B_R(y).
$$
 The proof will be based on stability properties above.

\begin{theorem}\label{lem dirichlet}
Let $f$ be a bounded continuous function, and let $\Omega$ be a uniformly convex domain. Then there exists a unique function $u \in C(\R^N)$ such that 
\begin{equation}\label{dirichlet} \begin{cases}
\I_k^\pm u = f(x) &\text{ in } \Omega \\
u=0 &\text{ in } \R^N \setminus \Omega.
\end{cases} \end{equation}
\end{theorem}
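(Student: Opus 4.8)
The strategy is the classical one: uniqueness comes for free from the comparison principle, while existence is obtained by Perron's method once we exhibit a sub- and a supersolution that both vanish outside $\Omega$. For uniqueness, suppose $u_1, u_2 \in C(\R^N)$ both solve \eqref{dirichlet}. Then $u_1$ is a subsolution and $u_2$ a supersolution (and vice versa) of $\I_k^\pm u = f$ in $\Omega$ with $u_1 = u_2 = 0$ outside $\Omega$; applying Theorem \ref{comparison} with $c \equiv 0$ (for which the smallness condition on $\|c^+\|_\infty$ is trivially satisfied) in both directions gives $u_1 \le u_2$ and $u_2 \le u_1$ in $\Omega$, hence $u_1 \equiv u_2$. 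For existence, by Lemma \ref{perron} it suffices to produce $\underline u, \overline u \in C(\R^N)$ with $\underline u = \overline u = 0$ in $\R^N \setminus \Omega$, $\underline u$ a subsolution and $\overline u$ a supersolution of $\I_k^\pm u = f$ in $\Omega$, with $\underline u \le \overline u$; then the solution $v$ furnished by Lemma \ref{perron} satisfies $v = 0$ outside $\Omega$ and is continuous, which is exactly the claim.

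The heart of the matter is therefore the construction of the barriers, and this is where the uniform convexity $\Omega = \bigcap_{y \in Y} B_R(y)$ is used. The natural candidate is built from the function $\psi(x) = (R^2 - |x - y|^2)^s_+$, for which Lemma \ref{funzione barriera} gives $\I_k^\pm \psi = -k\, C_s \beta(1-s,s)$ inside $B_R(y)$. The plan is to set
\[
\overline u(x) = A \, \phi(x), \qquad \underline u(x) = -A\, \phi(x),
\]
where $\phi(x) = \inf_{y \in Y} (R^2 - |x-y|^2)^s_+$ (so $\phi$ is continuous, nonnegative, and vanishes precisely on $\R^N \setminus \Omega$, since $\Omega = \bigcap_{y} B_R(y)$), and $A > 0$ is a large constant to be chosen. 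One checks that $A\phi$ is a supersolution: at any point $x_0 \in \Omega$ and for any admissible test function touching $A\phi$ from below, one reduces, using that $\phi$ is an infimum of the explicit radial barriers and the ellipticity/monotonicity properties already recorded in the preliminaries, to the inequality $\I_k^\pm(A\phi)(x_0) \le -A k\, C_s \beta(1-s,s)$, which is $\le f(x_0)$ once $A \ge \|f\|_\infty / (k\, C_s \beta(1-s,s))$. Symmetrically, $-A\phi$ is a subsolution with the same choice of $A$, using $\I_k^+(-u) = -\I_k^- u$ together with the analogous bound (or directly via Lemma \ref{funzione barriera}). Finally $\underline u = -A\phi \le 0 \le A\phi = \overline u$, so Lemma \ref{perron} applies.

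The main obstacle is the barrier construction itself — more precisely, verifying rigorously that the infimum-of-barriers function $\phi$ is a supersolution at interior points where the infimum over $y \in Y$ is attained (or approached) by several centers, since there the touching test function need not be comparable to any single smooth radial barrier on a full neighborhood. The clean way around this is to test directly with the definition of viscosity solution: if $\varphi \in C^2(B_\rho(x_0))$ touches $A\phi$ from below at $x_0$ and we pick $y_0 \in Y$ (nearly) realizing the infimum, then $\varphi - A u(x_0) + A(R^2 - |x-y_0|^2)^s_+$ still touches the smooth radial barrier $A(R^2 - |\cdot - y_0|^2)^s_+$ from below at $x_0$, while outside $B_\rho(x_0)$ we have $A\phi \le A(R^2 - |\cdot - y_0|^2)^s_+$ pointwise; feeding this into the inequality defining $\mathcal{I}(A\phi, \varphi, x_0, \rho)$ and comparing tail integrals yields $\mathcal{I}(A\phi, \varphi, x_0, \rho) \le \I_k^\pm \big(A(R^2 - |\cdot - y_0|^2)^s_+\big)(x_0) = -A k\, C_s \beta(1-s,s)$ (up to an $\varepsilon$ from the near-optimal choice of $y_0$, which is harmless), as required. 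Everything else is bookkeeping: continuity of $\phi$ and the identification of its zero set with $\R^N \setminus \Omega$ are elementary, and the passage through Lemma \ref{perron} is automatic once the barriers are in place.
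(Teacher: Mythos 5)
Your proposal is correct and follows essentially the same route as the paper: uniqueness by Theorem \ref{comparison}, existence via Lemma \ref{perron} with the barriers $\pm M\inf_{y\in Y}(R^2-|x-y|^2)^s_+$ built from Lemma \ref{funzione barriera}. The only difference is that the paper sidesteps your $\varepsilon$-approximate-minimizer argument by observing that the infimum over $y\in Y$ is exactly attained at the center $y_0$ whose sphere realizes $d(x_0)$ (since $B_{d(x_0)}(x_0)\subseteq\Omega\subseteq B_R(y)$ forces $|x_0-y|\le R-d(x_0)=|x_0-y_0|$ for all $y$), after which the supersolution property follows directly from the ellipticity/monotonicity remark in the preliminaries.
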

\begin{proof}
Exploiting the barrier functions in Lemma \ref{funzione barriera}, we build suitable sub/super solutions. Indeed, for any $y \in Y$ one considers the function 
\[ v_y(x)=M(R^2-\abs{x-y}^2)^s_+ \]
which for $M=M(k, s)$ big enough satisfies
\[ \I_k^+ v_y \le - \norm{f}_\infty \text{ in } B_R(y). \]
We now take 
\begin{equation}\label{v inf} v(x)= \inf_{y \in Y} v_y(x) \end{equation}
which is a supersolution to \eqref{dirichlet}. In order to prove it, first we note that $0 \le v(x) \le M R^{2s}$, hence $v$ is bounded. Moreover, 
notice that $v \in C^{0, s}(\R^N)$. Indeed, for any $x, y \in \overline \Omega$, one has 
\[ \begin{split} \abs{v(x)-v(z)} &\le \sup_y \abs{v_y(x) - v_y(z)} \\
&=M \sup_y \abs{(R^2 - \abs{x-y}^2)^s-(R^2 - \abs{z-y}^2)^s }\\
& \le M \sup_y \abs{(R^2 -\abs{x-y}^2) - (R^2 -\abs{z-y}^2)}^s \\
&= M \sup_y \abs{\abs{z-y}^2 - \abs{x-y}^2}^s \\
& = M \sup_y (\abs{z-y} + \abs{x-y})^s \abs{\abs{z-y}-\abs{x-y}}^s \\
& \le M (2R)^s \abs{z-x}^s.
\end{split}\]
Moreover, $v = 0$ in $\R^N \setminus \Omega$. Indeed, if $x \not \in \Omega$, there exists $y=y(x)$ such that $x \not \in B_R(y)$ which implies 
\[ 0  \le v(x) \le v_y(x) =M(R^2-\abs{x-y}^2)_+^s=0. \]

The infimum in definition \eqref{v inf} is attained, as given $x_0 \in \Omega$, we can choose $y_0 \in Y$ and $z_0 \in \partial B_R(y_0)$ such that 
\[ \abs{x_0 - z_0} = d(x_0)= \eta. \]
Therefore, as $B_\eta(x_0) \subseteq \Omega \subseteq B_R(y)$ for any $y \in Y$, 
\[ \abs{y-x_0} \le R-\eta = \abs{x_0-y_0} \]
and as a consequence $v(x_0)=v_{y_0}(x_0)$. In particular,
\[  \I_k^+ v_{y_0}(x_0) \le - \norm{f}_\infty, \]
which yields
\[ \I_k^+ v(x) \le- \norm{f}_\infty \, \text{ in } \Omega. \]

Analogously we take the supremum of the sub solutions 
\[ w_y(x)=- v_y(x). \]
Notice that 
\[\I_k^+ w_y(x) \ge \I_k^- w_y(x)=-\I_k^+ v_y(x) \ge \norm{f}_\infty \text{ in } B_R(y) \]
for a sufficiently big constant $M$. 

We now exploit the Perron method, applying Lemma \ref{perron}, to get a solution to \eqref{dirichlet}. 
Uniqueness follows from Theorem \ref{comparison}. 
\end{proof}

\section{Maximum principles and principal eigenvalues}\label{sec:maximum}
We finally define the following generalized principal eigenvalues, adapting the classical definition in \cite{BNV}, 
\[ \mu_k^\pm = \sup \left \{ \mu :\, \exists v \in LSC(\Omega)\cap L^\infty(\R^N), v>0 \text{ in } \Omega, v \ge 0 \text{ in } \R^N, \I_k^\pm v + \mu v \le 0 \text{ in } \Omega \right \}. \]
Also let us set 
\[ \bar\mu^\pm_k=\sup\left\{\mu:\,\exists v\in LSC(\Omega)\cap L^\infty(\R^N),\,\inf_\Omega v>0,\,v\geq0\;\text{in $\R^N$},\;\I^\pm_kv+\mu v\leq0  \text{ in } \Omega\right\}. \]

\begin{remark}
In this section, we only consider the operators $\I^\pm_k(\cdot)+\mu\cdot$, however, one can also treat operators with a zero order term like $\I^\pm_k(\cdot)+c(x) \cdot + \mu\cdot$, up to some technicalities. 
\end{remark}
\begin{theorem}\label{max principle bar}
The operators $\I^\pm_k(\cdot)+\mu\cdot$ satisfy the maximum principle for $\mu<\bar\mu^\pm_k$.
\end{theorem}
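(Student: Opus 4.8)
The plan is to argue by contradiction in the standard way for generalized principal eigenvalues, exploiting the definition of $\bar\mu_k^\pm$ together with the comparison-type mechanism of Theorem \ref{comparison}. Suppose $\mu < \bar\mu_k^\pm$ and let $u \in USC(\overline\Omega)\cap L^\infty(\R^N)$ satisfy $\I_k^\pm u + \mu u \ge 0$ in $\Omega$ and $u \le 0$ in $\R^N\setminus\Omega$, yet $\max_{\overline\Omega} u = u(x_0) > 0$ for some $x_0\in\Omega$. By definition of $\bar\mu_k^\pm$ there is $\mu'$ with $\mu < \mu' \le \bar\mu_k^\pm$ (actually $\mu < \mu' < \bar\mu_k^\pm$ suffices after shrinking) and a function $v\in LSC(\Omega)\cap L^\infty(\R^N)$ with $\inf_\Omega v > 0$, $v \ge 0$ in $\R^N$, and $\I_k^\pm v + \mu' v \le 0$ in $\Omega$. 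The point of requiring $\inf_\Omega v>0$ (rather than merely $v>0$) in $\bar\mu_k^\pm$ is exactly so that the ratio $u/v$ is well-controlled.

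The key step is to consider $\gamma^* = \inf\{\gamma > 0 : \gamma v \ge u \text{ in } \R^N\}$, which is finite and positive since $u \le \|u\|_\infty$, $\inf_\Omega v > 0$, $v \ge 0$ everywhere, and $u\le 0$ outside $\Omega$; moreover $\gamma^* v \ge u$ in $\R^N$ with equality attained at some point, and since $u \le 0 < \gamma^* v$ outside $\Omega$ (on $\Omega$ where $v>0$) wait — more carefully, the contact point $\bar x$ must lie in $\Omega$ because on $\R^N\setminus\Omega$ we have $u\le 0$ while $\gamma^* v \ge 0$, and if contact occurred there with $u=\gamma^* v=0$ one uses $u(x_0)>0$ to push the contact inside; so $\bar x \in \Omega$ and $(\gamma^* v - u)(\bar x) = 0$ is an interior minimum of $\gamma^* v - u$. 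Now $w := \gamma^* v$ is still a supersolution of $\I_k^\pm w + \mu' w \le 0$ in $\Omega$ (the operators are positively homogeneous of degree one and $\mu'$-terms scale linearly). Testing: at $\bar x$, $w - u$ has a global minimum, so by the ellipticity/monotonicity property recorded in the Remark after the definition of viscosity solution (if $\psi_1 - \psi_2$ has a maximum at $x_0$ then $\I_k^\pm\psi_1(x_0)\le\I_k^\pm\psi_2(x_0)$, applied with $\psi_1 = u$, $\psi_2 = w$ since $u - w$ has a max at $\bar x$), one gets a chain of viscosity inequalities yielding
\[
0 \le \I_k^\pm u(\bar x) + \mu u(\bar x) \le \I_k^\pm w(\bar x) + \mu u(\bar x) \le \I_k^\pm w(\bar x) + \mu w(\bar x) = \I_k^\pm w(\bar x) + \mu' w(\bar x) - (\mu'-\mu) w(\bar x) \le -(\mu'-\mu) w(\bar x) < 0,
\]
where $u(\bar x) = w(\bar x) > 0$ and $\mu'-\mu>0$ give the final strict inequality. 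This is the desired contradiction. To make the viscosity manipulation rigorous one passes through test functions: one doubles variables or directly uses that a $C^2$ function touching $w$ from below at $\bar x$ also touches $u$ from below there (up to the quadratic correction as in Remark \ref{test}), so the supersolution inequality for $w$ and the subsolution inequality for $u$ can be compared on the common test function, and the $\int_\rho^\infty$ tails are ordered because $u \le w$ globally.

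The main obstacle I anticipate is the handling of the nonlocal tail terms in the viscosity comparison: unlike in the local case one cannot simply "plug in" $\bar x$ into a pointwise differential inequality, so one must carefully set up test functions $\varphi$ touching $w$ from below at $\bar x$ with $\varphi = w$ on a small ball and $\varphi \le w \le$ (appropriate majorant) outside, verify that the same $\varphi$ (or $\varphi$ plus a vanishing quadratic) touches $u$ from below, and then use $u - w \le 0$ in $\R^N$ together with $u(\bar x) = w(\bar x)$ to conclude $\delta(u,\bar x,\tau\xi_i) \le \delta(w,\bar x,\tau\xi_i)$ for the outer integrals, exactly as in the proof of Theorem \ref{comparison}. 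A secondary subtlety is locating the contact point $\bar x$ strictly inside $\Omega$: one must use both $u \le 0$ outside $\Omega$ and $u(x_0) > 0$ to rule out the minimum of $\gamma^* v - u$ escaping to $\R^N\setminus\Omega$ or to $\partial\Omega$ (here the semicontinuity of $u$ on $\overline\Omega$ and a small perturbation argument, possibly replacing $v$ by $v + \epsilon d(x)^s$-type corrections as hinted in the remark after Lemma \ref{stability1}, handle the boundary). Once these points are dealt with the argument is short and self-contained, as the authors emphasize for the comparison principle.
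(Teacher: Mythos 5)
Your proposal follows essentially the same route as the paper: compare $u$ with the critical multiple of the supersolution $v$ furnished by the definition of $\bar\mu_k^\pm$, and derive $\mu'\le\mu$ at a (near-)contact point. The paper works with $\gamma=\sup_\Omega u/v$ and the perturbed maxima of $u-(\gamma-\varepsilon)v$, then lets $\varepsilon\to0$, which sidesteps the question of whether the infimum defining your $\gamma^*$ is actually attained at an interior contact point; your version can be salvaged (a maximizing sequence coming from the definition of $\gamma^*$ must stay away from $\partial\Omega$ precisely because $\inf_\Omega v>0$ and $u\le 0$ on $\R^N\setminus\Omega$ with $u\in USC(\overline\Omega)$), but your stated reason for pushing the contact inside is not the right one. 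Two further caveats. First, the displayed chain of pointwise inequalities is only heuristic: $u$ and $w=\gamma^* v$ are merely semicontinuous viscosity sub/supersolutions, so neither $\I_k^\pm u(\bar x)$ nor $\I_k^\pm w(\bar x)$ is defined, and the ellipticity remark you invoke requires $C^2$ functions. Second, your proposed shortcut --- that a $C^2$ function touching $w$ from below at $\bar x$ also touches $u$ from below --- fails: from $\varphi\le w$ and $u\le w$ with equality at $\bar x$ one cannot conclude $\varphi\le u$; the inequality points the wrong way, and the subsolution property of $u$ needs a test function from \emph{above}. Hence the doubling-of-variables argument you mention as an alternative is not optional but the actual proof, and it is exactly what the paper carries out, reusing the tail estimate $\delta(u,x_n,\tau\xi_i)-\delta((\gamma-\varepsilon)v,y_n,\tau\xi_i)\le 0$ from Theorem \ref{comparison}.
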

\begin{proof}
We consider $\I^+_k$, the other case being analogous. Let $\mu<\bar\mu^+_k$ and let $u\in USC(\overline\Omega)\cap L^\infty(\R^N)$ be a solution of 
\begin{equation*}
\left\{\begin{array}{cl}
\I^+_ku+\mu u\geq0 & \text{in $\Omega$}\\
u\leq0 & \text{in $\R^N\backslash\Omega$}.
\end{array}\right.
\end{equation*}
By contradiction we suppose that $u(x_0)>0$ for some $x_0\in\Omega$. In view of Theorem \ref{comparison} we have $\mu>0$.
 By the definition of $\bar\mu^+_k$ there exists $\eta\in(\mu,\bar\mu^+_k)$ and a nonnegative  bounded function $v\in LSC(\Omega)$ such that 
\begin{equation*}
\I^+_kv+\eta v\leq0\quad\text{in $\Omega$\; and\; $\displaystyle \inf_\Omega v>0$.}
\end{equation*}
Set $\gamma=\sup_\Omega\frac uv$. Then $$0<\frac{u(x_0)}{v(x_0)}\leq\gamma<+\infty$$
and for any $\varepsilon\in(0,\gamma)$ there exists $z_\varepsilon\in\Omega$ such that 
$$
u(z_\varepsilon)-(\gamma-\varepsilon)v(z_\varepsilon)>0.
$$
From this we infer that there exists $x_\varepsilon\in\Omega$ such that 
$$
M_\varepsilon:=\max_{\overline\Omega} [u(x)-(\gamma-\varepsilon)v(x)] =u(x_\varepsilon)-(\gamma-\varepsilon)v(x_\varepsilon)>0.
$$
For $n\in\mathbb N$ let $x_n=x_n(\varepsilon), y_n=y_n(\varepsilon)\in \overline\Omega$ be such that 
\begin{equation}\label{2605eq2}
\begin{split}
\max_{\overline\Omega\times\overline\Omega}[u(x)-(\gamma-\varepsilon)v(y)-n|x-y|^2]&=u(x_n)-(\gamma-\varepsilon)v(y_n)-n|x_n-y_n|^2\\
&\geq M_\varepsilon>0.
\end{split}
\end{equation}
Arguing as in the proof of Theorem \ref{comparison} we find that, for $n$ sufficiently large, 
\begin{equation}\label{2605eq1}
\max_{\overline\Omega\times\overline\Omega}[u(x)-(\gamma-\varepsilon)v(y)-n|x-y|^2]=\max_{\R^N\times\R^N}[u(x)-(\gamma-\varepsilon)v(y)-n|x-y|^2].
\end{equation}
Moreover, up to extract a subsequence, we may further assume that $(x_n,y_n)\to(\bar x,\bar x)$, with $\bar x\in\Omega$. Using $\varphi_n(x)=u(x_n) + n|x-y_n|^2- n\abs{x_n-y_n}^2$ as test function for $u$ at $x_n$, and also testing $v$ at $y_n$ with $\phi_n(y)=(\gamma-\varepsilon)v(y_n) -n|x_n-y|^2 + n\abs{x_n-y_n}^2$, and finally subtracting the corresponding inequalities, see also the proof of Theorem \ref{comparison}, we obtain
\begin{equation*}
\begin{split} 
\eta(\gamma-\varepsilon)v(y_n)&\leq\mu u(x_n)+(\gamma-\varepsilon+1)\frac{nk\rho^{2-2s}}{1-s}\\
&\hspace{1.7cm}+C_s \sup_{\left\{\xi_i\right\}_{i=1}^k\in{\mathcal V}_k}\sum_{i=1}^k\int_\rho^{+\infty}\frac{\delta(u, x_n, \tau \xi_i)- \delta((\gamma-\varepsilon) v, y_n, \tau \xi_i)}{\tau^{1+2s}}\,d\tau.
\end{split}
\end{equation*}
By \eqref{2605eq2}-\eqref{2605eq1} it follows that $\delta(u, x_n, \tau \xi_i)- \delta((\gamma-\varepsilon) v, y_n, \tau \xi_i)\leq0$. Hence
\begin{equation*}
\eta(\gamma-\varepsilon)v(y_n)\leq\mu u(x_n)+(\gamma-\varepsilon+1)\frac{nk\rho^{2-2s}}{1-s}.
\end{equation*}
Letting $\rho\to0$
\begin{equation*}
\eta(\gamma-\varepsilon)v(y_n)\leq\mu u(x_n).
\end{equation*}
Then as $n\to+\infty$
\begin{equation*}
\eta(\gamma-\varepsilon)v(\bar x)\leq\liminf_{n\to+\infty}\eta(\gamma-\varepsilon)v(y_n)\leq\limsup_{n\to+\infty}\mu u(x_n)\leq\mu u(\bar x)\leq\mu\gamma v(\bar x).
\end{equation*}
Since $v$ and $\gamma$ are positive and $\varepsilon$ can be chosen arbitrarily small, we reach the contradiction
\[
\eta\leq\mu. \qedhere
\]
\end{proof}

\begin{proposition}\label{mu-inf}
One has
\begin{enumerate}
\item[(i)] $\bar\mu_k^-=\mu_k^-=+\infty$ for any $k < N$. 
\item[(ii)] If $B_{R_1} \subseteq \Omega \subseteq B_{R_2}$, then 
\[ 0<\frac{c_2}{R_2^{2s}} \le \bar\mu_1^+ \le \dots \le \bar\mu_N^+\le \bar\mu_N^- \le \frac{c_1}{R_1^{2s}} < +\infty,  \]
where $c_1, c_2$ are positive constants depending on $s$.
\end{enumerate}
\end{proposition}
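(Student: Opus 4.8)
For item (i), the plan is to exhibit, for every $\mu>0$ and every $k<N$, a function $v$ that is admissible in the definition of $\bar\mu_k^-$ (hence also of $\mu_k^-$) and satisfies $\I_k^- v + \mu v \le 0$ in $\Omega$. The natural candidate is a function depending on only $k<N$ of the coordinate directions, so that one can always select a frame $\{\xi_i\}_{i=1}^k$ lying in the $(N-k)$-dimensional subspace on which $v$ is constant; along such directions $\delta(v,x,\tau\xi_i)=0$, which forces $\I_k^- v(x)\le 0$ pointwise regardless of $\mu$. Concretely I would take $v(x)=g(\langle x,e_N\rangle)$ (or a product/truncation thereof) chosen positive and bounded with $\inf_\Omega v>0$; since $N-k\ge 1$ one may pick $k$ orthonormal directions orthogonal to $e_N$, along which $v$ is constant, so $\sum_{i=1}^k\I_{\xi_i}v(x)=0\ge -\mu v(x)$, whence $\I_k^- v+\mu v\le 0$ in $\Omega$. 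As $\mu>0$ is arbitrary this gives $\bar\mu_k^-=\mu_k^-=+\infty$. The only point requiring a little care is making $v$ lower semicontinuous, bounded, and bounded below by a positive constant on $\Omega$ simultaneously; a smooth bounded profile $g$ with $g>0$ handles this, and one checks the integral defining $\I_{\xi}v$ converges using boundedness at infinity and smoothness near $x$.

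For the upper bound in item (ii), $\bar\mu_N^-\le c_1/R_1^{2s}$, I would use the barrier from Lemma \ref{funzione barriera}. Suppose $\mu$ is admissible for $\bar\mu_N^-$, with witness $v\ge 0$, $\inf_\Omega v>0$, $\I_N^- v+\mu v\le 0$ in $\Omega$. Compare $v$ on $B_{R_1}\subseteq\Omega$ with a suitable multiple of the barrier $w(x)=(R_1^2-|x|^2)_+^s$, for which $\I_N^- w=\I_k^- w=-N C_s\beta(1-s,s)$ in $B_{R_1}$ and $\I_N^- w + \mu w = -N C_s\beta(1-s,s)+\mu w$. Choosing the multiplicative constant so that $tw$ touches $v$ from below at an interior point of $B_{R_1}$ (possible since $v$ is bounded below by a positive constant while $w$ vanishes on $\partial B_{R_1}$) and evaluating the supersolution inequality for $v$ against the test function $tw$ at the contact point, one gets, after rearranging, $\mu\le N C_s\beta(1-s,s)/\big(t w(x_{\mathrm{contact}})\big)$; optimizing over $t$ (the largest $t$ for which $tw\le v$ on $\overline B_{R_1}$, equivalently on all of $\R^N$ since $w$ is supported there) and using the scale invariance $w_{R_1}(x)=R_1^{2s}w_1(x/R_1)$ yields a bound of the form $\mu\le c_1/R_1^{2s}$ with $c_1$ depending only on $s$ (through $N C_s\beta(1-s,s)$ and the geometry of the unit barrier). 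Taking the supremum over admissible $\mu$ gives $\bar\mu_N^-\le c_1/R_1^{2s}<+\infty$, and the same argument with $\I_N^-$ replaced by $\I_N^+$ (using $\I_N^+ w=\I_N^- w$ from Lemma \ref{funzione barriera}) shows the analogous estimate persists along the chain.

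For the lower bound $\bar\mu_1^+\ge c_2/R_2^{2s}>0$ I would produce an explicit admissible pair: take $\Omega\subseteq B_{R_2}$ and the function $v(x)=(R_2^2-|x|^2)_+^s$, which is in $LSC(\R^N)$, nonnegative on $\R^N$, and satisfies $\inf_\Omega v>0$ (since $\overline\Omega$ is a compact subset of the open ball $B_{R_2}$ — here I use $\Omega\subseteq B_{R_2}$ with the understanding, as in Theorem \ref{lem dirichlet}, that the inclusion can be taken strict, or else replace $R_2$ by $R_2+\epsilon$). By Lemma \ref{funzione barriera}, $\I_1^+ v = -C_s\beta(1-s,s)$ in $B_{R_2}\supseteq\Omega$, so $\I_1^+ v+\mu v\le -C_s\beta(1-s,s)+\mu\,\|v\|_\infty$, which is $\le 0$ in $\Omega$ as soon as $\mu\le C_s\beta(1-s,s)/(\sup_\Omega v)$. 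Since $\sup_\Omega v\le R_2^{2s}$, this shows $\mu=c_2 R_2^{-2s}$ is admissible for a suitable $c_2=c_2(s)>0$, hence $\bar\mu_1^+\ge c_2/R_2^{2s}>0$. Finally the monotonicity $\bar\mu_1^+\le\cdots\le\bar\mu_N^+$ follows from Remark \ref{rmk:ordine}: if $v$ witnesses admissibility of $\mu$ for $\bar\mu_k^+$, then $\I_k^+ v\le -\mu v\le 0$ in $\Omega$, so by Remark \ref{rmk:ordine} $\I_{k+1}^+ v\le\I_k^+ v\le -\mu v$, i.e. $v$ also witnesses $\mu$ for $\bar\mu_{k+1}^+$; similarly $\I_N^+ v\le 0$ implies $\I_N^- v\le\I_N^+ v$ is false in general, so for the last inequality $\bar\mu_N^+\le\bar\mu_N^-$ I would instead argue that $\I_N^+ v\le\I_N^- v$ always (the sup over a single frame when $k=N$... no) — more carefully, $\I_N^- v\le\I_N^+ v$ always, so a witness for $\bar\mu_N^+$ gives $\I_N^- v\le\I_N^+ v\le-\mu v$, hence $\mu$ is admissible for $\bar\mu_N^-$ and $\bar\mu_N^+\le\bar\mu_N^-$. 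The main obstacle I anticipate is the contact-point/touching argument for the upper bound: one must ensure the contact occurs at an interior point where $v$ is genuinely a supersolution in the viscosity sense and handle the fact that $v$ is only $LSC$ (not continuous), which is where the doubling-of-variables technique from the proof of Theorem \ref{comparison} — rather than a naive pointwise touching — will likely be needed.
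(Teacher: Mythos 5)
Your lower bound $\bar\mu_1^+\ge c_2/R_2^{2s}$ and the monotonicity chain $\bar\mu_1^+\le\dots\le\bar\mu_N^+\le\bar\mu_N^-$ are correct and match the paper (which handles the issue $\inf_\Omega v>0$ by adding $\varepsilon$ to the barrier rather than enlarging the ball; both devices work). The other two parts, however, contain genuine gaps.

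In item (i) your key step is a sign error. Choosing a frame orthogonal to $e_N$ along which $v$ is constant only gives $\I_k^- v(x)\le \sum_i \I_{\xi_i}v(x)=0$; but since $v>0$ and $\mu>0$ you need the much stronger inequality $\I_k^- v(x)\le -\mu v(x)<0$, and from $\I_k^- v\le 0$ together with $\mu v\ge 0$ you cannot conclude $\I_k^- v+\mu v\le 0$ (the two terms have opposite signs). A function constant along the chosen directions produces exactly zero there, which is never enough. The paper instead takes the radial Gaussian $w(x)=e^{-\alpha|x|^2}$ and uses the representation formula of \cite{BGT} (Theorem 3.4 there) to compute $\I_k^- w = k\,\mathcal{I}_{x^\perp}w = -2kC_s\alpha^s w(x)\int_0^{+\infty}(1-e^{-\tau^2})\tau^{-(1+2s)}\,d\tau$: the perpendicular directions give a \emph{strictly negative} quantity proportional to $w$ itself, so that $\alpha$ can be tuned to achieve $\I_k^- w+\mu w=0$ for any prescribed $\mu>0$. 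If you want to keep a one–dimensional profile, you would have to build $g$ with $\mathcal{I}g+\mu g\le 0$ on the relevant interval and use a frame \emph{containing} $e_N$, which is essentially the same amount of work.

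For the upper bound $\bar\mu_N^-\le c_1/R_1^{2s}$ your touching argument does not close. At the contact point $x_c$ of $t\,w$ with $v$ (with $w(x)=(R_1^2-|x|^2)_+^s$ and $t$ maximal), the supersolution inequality gives $\mu\le N C_s\beta(1-s,s)/w(x_c)$, and there is no control on $w(x_c)$ from below: the contact point can occur arbitrarily close to $\partial B_{R_1}$ (take a witness $v$ that is very large near the center and close to $\inf_\Omega v$ near $\partial B_{R_1}$), so the bound you obtain depends on $v$ and does not even yield finiteness of $\bar\mu_N^-$, let alone the uniform constant $c_1/R_1^{2s}$. The paper avoids this entirely: after reducing to $B_1$ by scaling, it follows \cite{QSX} and solves $-\I_N^- v=h$ in $B_1$, $v=0$ outside, for a compactly supported $h\ge 0$, $h\not\equiv 0$ (Theorem \ref{lem dirichlet}); the strong minimum principle gives $v>0$ in $B_1$, hence $\rho_0 v\ge h$ for some $\rho_0>0$, so $\I_N^- v+\rho_0 v\ge 0$ with $v>0$ inside and $v=0$ outside, which by Theorem \ref{max principle bar} forces $\bar\mu_N^-\le\rho_0$. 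Some such use of a solved Dirichlet problem (or an equivalent Harnack-type control on admissible witnesses) is needed; the naive comparison with an explicit barrier is not enough here.
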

\begin{proof}
(i) Let $w(x)=e^{-\alpha \abs{x}^2} > 0$ for $\alpha>0$ and fix any $\mu >0$. Since 
\[ \int_0^{+\infty} (1-e^{-\alpha \tau^2}) \tau^{-(1+2s)} \, d\tau = \alpha^s \int_0^{+\infty} (1-e^{- \tau^2}) \tau^{-(1+2s)} \, d\tau, \]
using Theorem 3.4 in \cite{BGT}  (see also Remark 3.5) we obtain 
\begin{align*} \I_k^- w + \mu w &=  k \mathcal{I}_{x^\perp} w + \mu w \\
&= - 2k C_s e^{-\alpha \abs{x}^2} \int_0^{+\infty} (1-e^{-\alpha \tau^2}) \tau^{-(1+2s)} \, d\tau + \mu e^{-\alpha \abs{x}^2} = 0
\end{align*}
if $$\alpha^s=\frac{\mu}{2k C_s \int_0^{+\infty} (1-e^{-\tau^2}) \tau^{-(1+2s)}},$$ 
where $x^\perp$ is a unitary vector such that $ \langle x, x^\perp \rangle=0$.

(ii) We first note that in the definitions of $\bar\mu^\pm_k$ it is not restrictive to suppose $\mu\geq0$ (since the constant function $v=1$ is a positive solution of $\I^\pm_kv=0$). Moreover if $\mu\geq0$ and $v$ is a nonnegative supersolution of the equation $$\I^+_kv+\mu v=0 \quad\text{in $\Omega$},$$
then $\I^+_kv\leq0$ in $\Omega$ and using Remark \ref{rmk:ordine} we have 
$$
\I^+_{k+1}v+\mu v\leq0 \quad\text{in $\Omega$}.
$$
This leads to $\bar\mu^+_k\leq\bar\mu^+_{k+1}$ for any $k=1,\dots,N-1$. If $k=N$, using the inequality $\I^-_N\leq\I^+_N$ we immediately obtain that $\bar\mu^+_N\leq\bar\mu^-_N$.
 
Also, by scaling we obtain
\[ \bar\mu_N^- (\Omega) \le \bar\mu_N^- (B_{R_1}) =  \frac{\bar\mu_N^- (B_1)}{R_1^{2s}}. \]
Hence it is sufficient to prove that $\bar\mu_N^- (B_1)$ is bounded from above. 

Arguing as in \cite{QSX}, choose a constant function $h \ge 0$, $h \not \equiv 0$ with compact support in $B_1$. By Theorem \ref{lem dirichlet}, there exists a unique solution to the following
\[ \begin{cases}
- \I_N^- v = h &\text{ in }B_1 \\
v=0 &\text{ in } \R^N \setminus B_1. 
\end{cases} \]
By Theorem \ref{comparison} and Theorem  \ref{SMP} $v >0$ in $B_1$. Since $h$ has compact support we may select a constant $\rho_0 >0$ such that $\rho_0 v \ge h$ in $B_1$. Therefore, $v$ satisfies
\[ \begin{cases}
 \I_N^- v +\rho_0v\geq0 &\text{ in }B_1 \\
v=0 &\text{ in } \R^N \setminus B_1. 
\end{cases} \]
By Theorem \ref{max principle bar} we infer that $\bar \mu_N^- \le \rho_0$. 

As for the bound from below, we observe that  $u(x)={(R_2^2 - \abs{x}^2)}^s_+ + \varepsilon$ satisfies 
\[ \I_1^+ u + \mu u=- C_s \beta(1-s, s) + \mu u \le 0 \]
if we take $\mu \le \frac{C_s \beta(1-s,s)}{R_2^{2s}+\varepsilon}$ for any $\varepsilon>0$, thus $\bar \mu_1^+ \ge \frac{ C_s \beta(1-s,s)}{R_2^{2s}}>0$. 
\end{proof}

\begin{remark}
Notice that the proof of $(i)$ above suggests the existence of a continuum of eigenvalues in $(0, +\infty)$ for $\I_k^- + \mu$ in $\mathbb R^N$.
\end{remark}

We now consider uniformly convex domains
 and  prove that $\bar \mu_k^+=\mu_k^+$. Moreover this common value turns out to be the optimal threshold for the validity of the maximum principle. 
We start with the next Lemma which will be crucial in the rest of the paper. 

\begin{lemma}\label{barrier}
Let $m$ be a positive constant and let $u$ be a solution of
\[ \begin{cases}
\I_k^+u (x) \ge -m &\text{ in } \Omega\\
u \le 0 &\text{ in } \R^N\setminus \Omega, 
\end{cases} \] 
where the domain $\Omega$ is  uniformly convex. Then there exists a positive constant $C=C(\Omega, m, s)$ such that 
\begin{equation}\label{2406eq12} u(x) \le C \, d(x)^s \end{equation}
for any $x \in \overline \Omega$.
\end{lemma}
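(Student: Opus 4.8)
The plan is to bound $u$ from above by the infimum-type barrier already exploited in Theorem~\ref{lem dirichlet}, rescaled so as to dominate the forcing term $-m$, and then to conclude by the comparison principle.

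First I would write the uniformly convex domain as $\Omega=\bigcap_{y\in Y}B_R(y)$ and fix a constant $M=M(m,k,s)>0$ with $M\,k\,C_s\,\beta(1-s,s)\ge m$. For $y\in Y$ set $v_y(x)=M(R^2-\abs{x-y}^2)^s_+$. By Lemma~\ref{funzione barriera}, together with the translation invariance and the positive $1$-homogeneity of $\I_k^+$, one has $\I_k^+v_y=-M\,k\,C_s\,\beta(1-s,s)\le-m$ in $B_R(y)$, while $v_y\ge0$ on $\R^N$. Put $w=\inf_{y\in Y}v_y$; exactly as in the proof of Theorem~\ref{lem dirichlet} one checks that $0\le w\le MR^{2s}$, that $w\in C^{0,s}(\R^N)$, that $w\equiv0$ in $\R^N\setminus\Omega$ (because any $x\notin\Omega$ lies outside some $B_R(y)$), and that the infimum is attained at every point. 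Consequently $w$ is a viscosity supersolution of the equation $\I_k^+w=-m$ in $\Omega$: if $\varphi\in C^2$ touches $w$ from below at $x_0\in\Omega$ and $v_{y_0}(x_0)=w(x_0)$, then $\varphi\le w\le v_{y_0}$ near $x_0$ with equality at $x_0$ and $\delta(w,x_0,\tau\xi)\le\delta(v_{y_0},x_0,\tau\xi)$ for all $\tau>0$ and $\abs\xi=1$, so the classical supersolution inequality for $v_{y_0}$ passes to $w$.

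Next, since $u\le0=w$ in $\R^N\setminus\Omega$, and $u$ is a subsolution while $w$ is a supersolution of $\I_k^+ v=-m$ in $\Omega$ (with zero-order coefficient $c\equiv0$, so the smallness hypothesis of Theorem~\ref{comparison} is automatic), the comparison principle Theorem~\ref{comparison} gives $u\le w$ in $\Omega$. It then remains to read off the $d(x)^s$ decay of $w$, again as in Theorem~\ref{lem dirichlet}: given $x_0\in\Omega$ with $d(x_0)=\eta$, pick $z_0\in\partial\Omega$ with $\abs{x_0-z_0}=\eta$ and $y_0\in Y$ with $z_0\in\partial B_R(y_0)$; from $B_\eta(x_0)\subseteq\Omega\subseteq B_R(y_0)$ and $\abs{z_0-y_0}=R$ one gets $\abs{x_0-y_0}=R-\eta$, whence
\[ u(x_0)\le w(x_0)\le v_{y_0}(x_0)=M(2R\eta-\eta^2)^s\le M(2R)^s\,d(x_0)^s. \]
Together with $u\le0$ on $\partial\Omega\subseteq\R^N\setminus\Omega$ this yields \eqref{2406eq12} with $C=M(2R)^s$.

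I do not expect a serious obstacle here: the only step that is not pure bookkeeping is the verification that the infimum $w$ remains a supersolution across the spheres $\partial B_R(y)$ where the minimizing ball changes, and this is precisely the argument already carried out for the barrier in Theorem~\ref{lem dirichlet}; one should also simply note that the proof of Theorem~\ref{comparison} uses only the inequalities $u\le0$ and $w\ge0$ outside $\Omega$, which hold in our situation.
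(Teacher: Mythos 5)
Your proof is correct and follows essentially the same route as the paper: the barrier $v_y(x)=M(R^2-\abs{x-y}^2)^s_+$ from Lemma \ref{funzione barriera}, the comparison principle of Theorem \ref{comparison} (which indeed only needs $u\le 0$ and the barrier $\ge 0$ outside $\Omega$), and the elementary estimate $R^2-\abs{x-y}^2\le 2R\,d(x)$ for the well-chosen $y$. The only difference is organizational: the paper compares $u$ with each fixed $v_y$ separately and then selects the relevant $y$ for each point $x$, which lets it skip your intermediate step of verifying that $w=\inf_y v_y$ is itself a supersolution.
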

\begin{proof}
Fix any $y \in Y$ and consider the function 
\[ v_y(x)=M {(R^2-\abs{x-y}^2)}^s _+\]
 where $M$ is such that $k M C_s \beta(1-s, s)=m$. Then 
\[ \I_k^+ v_y(x) = - k  M C_s \beta(1-s, s)=-m. \]
Also, we point out that $v_y(x) \ge 0$ in $\R^N$. By the comparison principle, see Theorem \ref{comparison}, 
$u(x) \le v_y(x)$ in $\R^N$. 
Let $x \in \Omega$ and select $z \in \partial \Omega$ so that $d(x)=\abs{x-z}$. Choose $y \in Y$ such that $z \not \in B_R(y)$.
Notice that, since $\abs{x-y} \le R$, 
\begin{align*} {(R^2-\abs{x-y}^2)}^s &= {(R-\abs{x-y})}^s{(R+\abs{x-y})}^s \le 2^s R^s {(R-\abs{x-y})}^s \\
&=  2^s R^s \abs{x-z}^s= 2^s R^s d(x)^s.\end{align*}
Thus for any $x \in \overline{\Omega}$ 
\[ u(x) \le M {(R^2-\abs{x-y}^2)}^s \le M 2^s R^s d(x)^s, \]
leading to \eqref{2406eq12} with $C= M 2^s R^s$.
\end{proof}

\begin{theorem}\label{subsol mu} 
Let $\Omega$ be a uniformly convex domain. There exists a nonnegative subsolution $v \not\equiv 0$ of 
\[ \begin{cases}
\I_k^+ v + \bar \mu_k^+ v= 0 &\text{ in } \Omega \\
v =0 &\text{ in } \R^N \setminus \Omega.
\end{cases} \]
\end{theorem}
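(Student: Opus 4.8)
The plan is to approximate $\bar\mu_k^+$ from below and pass to a normalized limit, in the spirit of the construction of principal eigenfunctions in \cite{BNV}. Fix a sequence $\mu_n$ with $0<\mu_n<\bar\mu_k^+$ and $\mu_n\uparrow\bar\mu_k^+$ (possible since $0<\bar\mu_k^+<+\infty$ by Proposition \ref{mu-inf}). For each $n$ I would first solve
\[ \begin{cases} \I_k^+ v_n+\mu_n v_n=-1 &\text{in }\Omega\\ v_n=0 &\text{in }\R^N\setminus\Omega,\end{cases} \]
obtaining $v_n\in C(\R^N)$, $v_n\ge 0$. Since the right-hand side carries a ``bad-signed'' zeroth order term, I would not use a naive barrier but instead a monotone iteration anchored to the definition of $\bar\mu_k^+$: pick $\mu_n'\in(\mu_n,\bar\mu_k^+)$ and a nonnegative supersolution $\phi_n$ with $\inf_\Omega\phi_n>0$ of $\I_k^+\phi_n+\mu_n'\phi_n\le 0$; for $A$ large, $A\phi_n$ is a supersolution of $\I_k^+u+\mu_n u=-1$. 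Setting $v_n^{(0)}=0$ and letting $v_n^{(j+1)}$ solve $\I_k^+u=-1-\mu_n v_n^{(j)}$ in $\Omega$, $u=0$ outside (which exists by Theorem \ref{lem dirichlet}), the comparison principle Theorem \ref{comparison} yields $0=v_n^{(0)}\le v_n^{(1)}\le\cdots\le A\phi_n$, and the increasing limit $v_n$ solves the problem by the stability Lemma \ref{stability1}. Moreover $\I_k^+v_n=-1-\mu_nv_n\le -1$, so comparing $v_n$ with the solution $\psi$ of $\I_k^+\psi=-1$, $\psi=0$ outside $\Omega$, and applying Corollary \ref{cor hopf} to $\psi$, gives $v_n\ge\psi\ge c_0\,d(\cdot)^s>0$ in $\Omega$, uniformly in $n$; while $\I_k^+v_n\ge -(1+\mu_n\|v_n\|_\infty)$ and Lemma \ref{barrier} give $v_n(x)\le C(1+\mu_n\|v_n\|_\infty)\,d(x)^s$ on $\overline\Omega$, with $C=C(\Omega,s)$, so in particular $v_n=0$ on $\partial\Omega$.

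Next I would show $\|v_n\|_\infty\to+\infty$. If not, some subsequence stays bounded by $M$; applying the supersolution part of Lemma \ref{stability1} to the supersolutions $v_n$ of $\I_k^+u=-1-\mu_nv_n$, the function $\underline v:={\liminf}_* v_n$ is a supersolution of $\I_k^+u+\bar\mu_k^+u=-1$ in $\Omega$, with $\underline v\ge\psi>0$ in $\Omega$, $0\le\underline v\le M$ in $\R^N$. Then for $\varepsilon>0$ small and $\mu>\bar\mu_k^+$ close enough, $\underline v+\varepsilon$ is an admissible competitor in the definition of $\bar\mu_k^+$ at level $\mu$: indeed $\inf_\Omega(\underline v+\varepsilon)\ge\varepsilon>0$, $\underline v+\varepsilon\ge 0$ in $\R^N$, and since adding a constant does not change $\I_k^+$,
\[ \I_k^+(\underline v+\varepsilon)+\mu(\underline v+\varepsilon)=\I_k^+\underline v+\bar\mu_k^+\underline v+(\mu-\bar\mu_k^+)\underline v+\mu\varepsilon\le -1+(\mu-\bar\mu_k^+)M+\mu\varepsilon<0. \]
This contradicts the maximality of $\bar\mu_k^+$, so $\|v_n\|_\infty\to+\infty$.

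Finally, set $w_n=v_n/\|v_n\|_\infty$, so $\|w_n\|_\infty=1$, $w_n\ge 0$, $w_n=0$ in $\R^N\setminus\Omega$, and $\I_k^+w_n+\mu_nw_n=-1/\|v_n\|_\infty$ in $\Omega$. The barrier bound from Step 1 rescales to $w_n(x)\le C(1/\|v_n\|_\infty+\mu_n)\,d(x)^s\le C'd(x)^s$ on $\overline\Omega$ for $n$ large, uniformly. Applying Lemma \ref{stability1} to the subsolutions $w_n$ of $\I_k^+u=-1/\|v_n\|_\infty-\mu_nw_n$ (a locally uniformly bounded right-hand side), the function $v:={\limsup}^* w_n\in USC(\R^N)\cap L^\infty(\R^N)$ is a subsolution of $\I_k^+u={\liminf}_*(-1/\|v_n\|_\infty-\mu_nw_n)\ge -\bar\mu_k^+ v$, i.e. a subsolution of $\I_k^+v+\bar\mu_k^+v=0$ in $\Omega$. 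The uniform estimate $w_n\le C'd^s$ forces $v\le C'd^s$, hence $v=0$ on $\partial\Omega$; combined with $v\le 0$ in $\R^N\setminus\overline\Omega$ (from the stability lemma) and $v\ge 0$ everywhere, this gives $v=0$ in $\R^N\setminus\Omega$ and $v\ge 0$. Since $w_n$ attains its maximum $1$ at some $x_n\in\Omega$ and $1=w_n(x_n)\le C'd(x_n)^s$ keeps $d(x_n)$ bounded below, up to a subsequence $x_n\to\bar x\in\Omega$, whence $v(\bar x)\ge\limsup_n w_n(x_n)=1>0$ and $v\not\equiv 0$. This $v$ is the desired subsolution.

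The main obstacles I anticipate are two. First, Step 1: solving the approximating problems with the zeroth order term $\mu_n v_n$ of unfavourable sign cannot rely on an explicit barrier and must be routed through the maximum principle Theorem \ref{max principle bar} (equivalently, through the very barriers defining $\bar\mu_k^+$), together with care in the monotone iteration and its convergence. Second, Step 2: excluding a bounded subsequence of $\|v_n\|_\infty$ uses crucially that the definition of $\bar\mu_k^+$ imposes no boundary condition, so that a constant perturbation $\underline v+\varepsilon$ of the limiting supersolution becomes an admissible competitor past the threshold; getting the a priori bounds (lower bound by $\psi$, upper bound by Lemma \ref{barrier}) uniform in $n$ is what makes both the dichotomy and the final passage to the limit go through.
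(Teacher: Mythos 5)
Your argument is correct in substance but follows a genuinely different route from the paper's. The paper never solves the approximating problems $\I_k^+ w + (\bar\mu_k^+ - \tfrac1n)w = -1$: instead it considers the whole family $A_n$ of nonnegative subsolutions, shows by a Perron-type argument (the ``bump'' construction from Lemma \ref{perron}) that if all of them were uniformly bounded then the lower envelope $(U_n)_*$ of their pointwise supremum would be a bounded, strictly positive supersolution, and derives the same contradiction you do by adding $\varepsilon$ and violating the maximality of $\bar\mu_k^+$. You instead build one concrete approximating solution $v_n$ per level by monotone iteration, using the competitors $\phi_n$ from the definition of $\bar\mu_k^+$ as upper barriers, and rule out boundedness of $\|v_n\|_\infty$ by passing to the half-relaxed lower limit of the supersolutions. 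Your route buys a more constructive object at each level and avoids the bump construction entirely; the paper's route avoids having to solve (or iterate towards) the Dirichlet problem with the unfavourably signed zeroth-order term, which is why it can work directly with arbitrary subsolutions. The final normalization, the use of Lemma \ref{barrier} to control the boundary, and the extraction of an interior maximum point are essentially identical in both proofs.

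Two technical points deserve care, though neither is a genuine gap. First, Theorem \ref{comparison} with $c(x)=\mu_n$ is not available when $\mu_n \ge C_s\frac ks(\mathrm{diam}\,\Omega)^{-2s}$, so every comparison in your iteration (both $v_n^{(j)}\le v_n^{(j+1)}$ and $v_n^{(j+1)}\le A\phi_n$) must be performed on the frozen equation $\I_k^+u=-1-\mu_n v_n^{(j)}$, whose right-hand side is continuous; this is what your setup implicitly does, but the phrase ``$A\phi_n$ is a supersolution of $\I_k^+u+\mu_nu=-1$'' should not be read as licensing a direct application of Theorem \ref{comparison} to that equation. Second, the monotone limit $v_n=\sup_j v_n^{(j)}$ is only LSC, so Lemma \ref{stability1} yields a supersolution $(v_n)_*=v_n$ and a subsolution $(v_n)^*$ which need not coincide; since $\|(v_n)^*\|_\infty=\|v_n\|_\infty$ and the barrier bound of Lemma \ref{barrier}, applied to each continuous iterate $v_n^{(j)}$, passes to both envelopes, it suffices to run Step 2 with $(v_n)_*$ and Step 3 with $(v_n)^*$, replacing the maximum point of $w_n$ by a near-maximizer.
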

\begin{proof}

Let us consider the problem 
\begin{equation}\label{equation n}
 \begin{cases}
\I_k^+ w +\left(\bar \mu_k^+ -  \frac1n \right) w=-1 &\text{ in } \Omega \\
w =0 &\text{ in } \R^N \setminus \Omega,
\end{cases} \end{equation}
and define
\[ A_n=\{ w \in USC(\R^N)  \text{ nonnegative subsolution of \eqref{equation n} s.t. } w=0 \text{ on } \R^N \setminus \Omega \}. \]
One has $\emptyset \ne A_n \subseteq A_{n+1}$. We claim that for any $n$ there exists $w_n \in A_n$ such that $\lim_n \norm{w_n}_\infty =+ \infty$. 
If the claim is true, then we define $z_n=\frac{w_n}{\norm{w_n}}$, which turn out to be solutions of
\[ \I_k^+ z_n +\left(\bar \mu_k^+ - \frac1n \right) z_n \ge - \frac{1}{\norm{w_n}} \text{ in } \Omega. \]
By semicontinuity, there exists a sequence $x_n \in \Omega$ such that $\sup_\Omega z_n=z_n(x_n)=1$. 
Up to a subsequence, $x_n \to x_0$, and by Lemma \ref{barrier} $x_0 \in \Omega$. Thus
$v(x)={\limsup_n}^* z_n(x) $
satisfies by Lemma \ref{stability1}
\[ \I_k^+ v + \bar \mu_k^+ v \ge  0 \text{ in } \Omega \]
and, again by Lemma \ref{barrier} $v = 0$ on $\R^N \setminus \Omega$. Also, $v(x_0)=1$ and the proof is complete.

Let us now prove the claim. We will proceed by contradiction, assuming that for any sequence $u_n \in A_n$ then $\limsup_n \norm{u_n}_\infty  < + \infty$, and split the proof into steps.

\textit{Step 1.} 
We show that $U_n(x)=\sup_{w \in A_n} w(x) < +\infty$ for any $x$ and any $n$. 

\noindent If it is not the case, then there exists $\bar n$ and $\bar x$ such that $U_{\bar n}(\bar x)=+\infty$, and by definition of supremum, there exists a sequence $(u_n)_n \subseteq A_{\bar n}$ such that $\lim_n u_n (\bar x) =+ \infty$. Since for any $n \ge \bar n$ one has $A_{\bar n} \subseteq A_n$, then $u_n \in A_n$ for any $n \ge \bar n$ and $\lim_n \norm{u_n}_\infty=+\infty$, a contradiction. 

\textit{Step 2.}
One has $\norm{U_n}_\infty < +\infty$ for any fixed $n$. 

\noindent Indeed, if there exists $\bar n$ such that $\norm{U_{\bar n}}_\infty=+\infty$, then there exists $x_n \in \Omega$ and $u_n \in A_{\bar n}$ such that $u_n(x_n) \to +\infty$. Then $u_n \in A_n$ for any $n \ge \bar n$,  and $\norm{u_n}_\infty \ge u_n(x_n) \to +\infty$, a contradiction. 

\textit{Step 3.}
We show that there exists a constant $C>0 $ such that $\norm{U_n}_\infty \le C$ uniformly in $n$. 

\noindent Notice that $\norm{U_n}_\infty \le \norm{U_{n+1}}_\infty$ and hence if it is not bounded, then  $\norm{U_n}_\infty \to \infty$, thus $\norm{u_n}_\infty \to \infty$ for a sequence $u_n \in A_n$, a contradiction. 

\textit{Step 4.}
One has $U_n=(U_n)^*$ is a subsolution to \eqref{equation n} such that $U_n=0$ in $\R^N \setminus \Omega$. 

\noindent Indeed, $(U_n)^*$ is a subsolution by Lemma \ref{stability2}. Moreover, since for any $u \in A_n$
\[ \begin{cases}
\I_k^+ u \ge -(1+\bar \mu_k^+ C) & \text{ in } \Omega \\
u=0 & \text{ in } \R^N \setminus \Omega,
\end{cases} \]
where $C$ is the constant found in Step 3,
by applying Lemma \ref{barrier} we have $u(x) \le \tilde C d(x)^s$, for a positive constant $\tilde C= \tilde C(\bar \mu_k^+ C, s, \Omega)$, and as a consequence $(U_n)^*=0$ in $\R^N \setminus \Omega$. Finally, by maximality of $U_n$, we conclude $U_n=(U_n)^*$. 

\textit{Step 5.} Conclusion of the proof of the claim.

\noindent 
By using the same argument as in the proof of Lemma \ref{perron} (in particular the bump construction), we prove that $(U_n)_*$ is a supersolution to \eqref{equation n}, 
which implies that $(U_n)_*+\varepsilon$ is a supersolution of
\[ \I_k^+ w +\left(\bar \mu_k^+ + \frac1n \right) w=0 \text{ in } \Omega \]
if $n$ is sufficiently big, and $\varepsilon$ is sufficiently small. Also, $(U_n)_*+\varepsilon>0$ in $\overline \Omega$, which contradicts the definition of $\bar \mu_k^+$. 
\end{proof}

\begin{lemma}\label{mu=bar mu}
Let $\Omega$ be a convex domain. Then $\mu_k^+=\bar \mu_k^+$.
\end{lemma}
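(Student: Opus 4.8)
The plan is to prove the two inequalities separately. That $\bar\mu_k^+\le\mu_k^+$ is immediate: any $v$ admissible in the definition of $\bar\mu_k^+$ has $\inf_\Omega v>0$, hence $v>0$ in $\Omega$, so it is admissible in the definition of $\mu_k^+$ as well; thus the supremum defining $\bar\mu_k^+$ runs over a subset of the one defining $\mu_k^+$. For the reverse inequality $\mu_k^+\le\bar\mu_k^+$, I would fix any $\mu<\mu_k^+$ and, for each $\lambda\in(0,1)$, build a competitor for $\bar\mu_k^+$ with parameter $\lambda^{2s}\mu$; letting $\lambda\to1^-$ then yields $\mu\le\bar\mu_k^+$, and since $\mu<\mu_k^+$ is arbitrary we are done. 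By definition of $\mu_k^+$ there is $v\in LSC(\Omega)\cap L^\infty(\R^N)$ with $v>0$ in $\Omega$, $v\ge0$ in $\R^N$ and $\I_k^+ v+\mu v\le0$ in $\Omega$ (note that, $v$ being nonnegative, a competitor for a parameter larger than $\mu$ is also a competitor for $\mu$, so such a $v$ exists whenever $\mu<\mu_k^+$).

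Fix $x_0\in\Omega$, and for $\lambda\in(0,1)$ set $T_\lambda(x)=x_0+\lambda(x-x_0)$ and $\hat v_\lambda:=v\circ T_\lambda$, which is LSC, bounded and nonnegative on $\R^N$. The computational heart is the scaling identity $\I_\xi\hat v_\lambda(x)=\lambda^{2s}\,\I_\xi v(T_\lambda(x))$ for every $\xi\in\mathcal S^{N-1}$, obtained by the change of variable $\sigma=\lambda\tau$ in the singular integral; carried out at the level of test functions it gives, in the viscosity sense, $\I_k^+\hat v_\lambda+\lambda^{2s}\mu\,\hat v_\lambda\le0$ on $T_\lambda^{-1}(\Omega)=x_0+\lambda^{-1}(\Omega-x_0)$. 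Now convexity of $\Omega$ enters twice. First, since $x_0\in\Omega$ and $\Omega$ is convex, $(1-\lambda)x_0+\lambda z\in\Omega$ for every $z\in\overline\Omega$ and $\lambda\in(0,1)$; this says exactly $\overline\Omega\subset T_\lambda^{-1}(\Omega)$, so the above differential inequality for $\hat v_\lambda$ holds on a neighbourhood of $\overline\Omega$, in particular in $\Omega$. Second, $T_\lambda(\overline\Omega)=(1-\lambda)x_0+\lambda\overline\Omega$ is, by the same fact (and using that $\Omega$ is bounded), a compact subset of $\Omega$, so $\inf_\Omega\hat v_\lambda\ge\inf_{\overline\Omega}\hat v_\lambda=\min_{T_\lambda(\overline\Omega)}v>0$ because $v$ is LSC and positive on $\Omega$. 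Hence $\hat v_\lambda$ is admissible for $\bar\mu_k^+$ with parameter $\lambda^{2s}\mu$, i.e.\ $\lambda^{2s}\mu\le\bar\mu_k^+$, and letting $\lambda\to1^-$ concludes.

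The one point that requires care is the viscosity formulation of the scaling identity: if $\varphi\in C^2(B_\rho(x_1))$ touches $\hat v_\lambda$ from below at $x_1\in T_\lambda^{-1}(\Omega)$, then $\varphi\circ T_\lambda^{-1}$ touches $v$ from below at $T_\lambda(x_1)\in\Omega$ with test radius $\lambda\rho$, and one must verify that the quantity $\mathcal I(u,\varphi,x_0,\rho)$ entering the definition of supersolution rescales precisely by the factor $\lambda^{2s}$ — a routine but slightly delicate change of variables that has to treat the $C^2$ part and the exterior part of the integral consistently. Everything else reduces to elementary manipulations with convex sets.
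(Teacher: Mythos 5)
Your proof is correct and follows essentially the same route as the paper: the paper's competitor $\tilde v(x)=v\bigl(\frac{x+\varepsilon x_0}{1+\varepsilon}\bigr)$ is exactly your $v\circ T_\lambda$ with $\lambda=\frac{1}{1+\varepsilon}$, yielding the same scaling factor $(1+\varepsilon)^{-2s}=\lambda^{2s}$ and the same use of convexity to get $T_\lambda(\overline\Omega)\subset\subset\Omega$ and hence a positive infimum. Your write-up is in fact slightly more careful than the paper's on the compactness/LSC argument for $\inf_\Omega\hat v_\lambda>0$ and on the viscosity-level change of variables.
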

\begin{proof}
Fix any $\varepsilon>0$. Let $v \in LSC(\Omega)\cap L^\infty(\R^N)$ such that  $v>0$  in $ \Omega$, $v \ge 0$ in $\R^N$, and $\I_k^+ v + (\mu_k^+-\varepsilon) v \le 0$ in $ \Omega$. 
Fix $x_0 \in \Omega$, and observe that 
\[ \tilde v(x)= v\left( \frac{x+\varepsilon x_0}{1+\varepsilon} \right) \]
satisfies
\[ \I_k^+ \tilde v + \frac{\mu_k^+-\varepsilon}{(1+\varepsilon)^{2s}} \tilde v \le 0 \text{ in } \Omega. \]
Also, $\tilde v >0$ in $\overline \Omega$, as $\Omega$ is convex. Thus, 
\[ \bar \mu_k^+ \ge \frac{\mu_k^+-\varepsilon}{(1+\varepsilon)^{2s}} \]
from which letting $\varepsilon \to 0$ we have $\mu_k^+\le \bar \mu_k^+$, and by definition equality holds. 
\end{proof}

\begin{theorem}\label{max principle}
Let $\Omega$ be a uniformly convex  domain. The operator
\[ I^+_k + \mu \]
satisfies the maximum principle if and only if $\mu < \mu_k^+ < +\infty$, and correspondingly 
\[ I_k^- + \mu \]
satisfies the maximum principle for any $\mu\in\mathbb R$. 
\end{theorem}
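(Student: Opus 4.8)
The plan is to combine the results already assembled to establish both directions. For the operator $\I_k^+$, the ``if'' direction ($\mu<\mu_k^+$ implies the maximum principle) is essentially Theorem \ref{max principle bar}: since $\Omega$ is convex, Lemma \ref{mu=bar mu} gives $\mu_k^+=\bar\mu_k^+$, and Theorem \ref{max principle bar} states the maximum principle holds for $\mu<\bar\mu_k^+$. The finiteness $\mu_k^+<+\infty$ follows from Proposition \ref{mu-inf}(ii), since a uniformly convex domain is squeezed between two balls $B_{R_1}\subseteq\Omega\subseteq B_{R_2}$ and $\mu_k^+=\bar\mu_k^+\le\bar\mu_N^-\le c_1 R_1^{-2s}<+\infty$. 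For the ``only if'' direction I would argue by contraposition: if $\mu\ge\mu_k^+$ I must exhibit a function violating the maximum principle. The natural candidate is the subsolution $v\not\equiv0$ constructed in Theorem \ref{subsol mu}, which satisfies $\I_k^+v+\bar\mu_k^+v\ge0$ in $\Omega$ (in the viscosity sense), $v\ge0$, $v=0$ in $\R^N\setminus\Omega$, and $v$ is positive somewhere in $\Omega$. Since $\mu\ge\mu_k^+=\bar\mu_k^+$, the same $v$ satisfies $\I_k^+v+\mu v\ge\I_k^+v+\bar\mu_k^+v\ge0$ in $\Omega$ (using $v\ge0$), it vanishes outside $\Omega$, yet it is not $\le0$ in $\Omega$; hence the maximum principle fails at $\mu=\mu_k^+$, and a fortiori for every $\mu>\mu_k^+$ by monotonicity in $\mu$ (again because $v\ge0$). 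This gives the equivalence for $\I_k^+$.

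For the operator $\I_k^-$ I would split into the cases $k<N$ and $k=N$. When $k<N$, Proposition \ref{mu-inf}(i) gives $\mu_k^-=+\infty$, so for every $\mu\in\R$ one may pick $\mu'\in(\mu,+\infty)$ with a positive bounded $w$ satisfying $\I_k^-w+\mu' w\le0$ in $\Omega$, $w>0$ in $\Omega$, $w\ge0$ in $\R^N$ (the function $w(x)=e^{-\alpha|x|^2}$ from that proof, noting $\inf_\Omega w>0$ on the bounded set $\Omega$); then the argument of Theorem \ref{max principle bar}, carried out verbatim for $\I_k^-$ with the supremum replaced by the infimum, yields the maximum principle for $\I_k^-+\mu$. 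Alternatively, and more cleanly, since $\bar\mu_k^-=+\infty$ one applies Theorem \ref{max principle bar} directly. When $k=N$, I would instead use the inequality $\I_N^-\le\I_N^+$ pointwise on test functions: if $u$ solves $\I_N^-u+\mu u\ge0$ in $\Omega$ with $u\le0$ outside, then $u$ is also a subsolution of $\I_N^+u+\mu u\ge0$; but from Proposition \ref{mu-inf}(ii) we have $\mu_N^+=\bar\mu_N^+\le\bar\mu_N^-$, and one must check that in fact $\bar\mu_N^-$ can be taken as large as needed—here the point is that $\I_N^-+\mu$ still admits the positive supersolution $w(x)=e^{-\alpha|x|^2}$ for \emph{every} $\mu>0$ (the computation in Proposition \ref{mu-inf}(i) only used $k<N$ for the representation $\I_k^-w=k\,\mathcal{I}_{x^\perp}w$, so for $k=N$ one needs the separate estimate $\I_N^-w+\mu w\le0$, which holds since $\I_N^-w\le\sum_{i=1}^N\mathcal{I}_{\xi_i}w$ for the frame containing $x^\perp$, giving a negative contribution of unbounded size as $\alpha\to\infty$). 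Thus $\bar\mu_N^-=+\infty$ as well, and Theorem \ref{max principle bar} applies to give the maximum principle for $\I_N^-+\mu$ for all $\mu\in\R$.

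The main obstacle I anticipate is the ``only if'' direction for $\I_k^+$: one must make sure that the subsolution from Theorem \ref{subsol mu} genuinely contradicts the maximum principle, i.e.\ that $v$ is \emph{not} nonpositive in $\Omega$, which is guaranteed there by $v(x_0)=1>0$, and that the inequality $\I_k^+v+\mu v\ge\I_k^+v+\mu_k^+v$ is legitimate at the level of viscosity subsolutions when $\mu\ge\mu_k^+$ and $v\ge0$—this is immediate since adding the nonnegative quantity $(\mu-\mu_k^+)v(x_0)\ge0$ to the left side of the subsolution inequality only helps. A secondary point requiring care is the claim $\bar\mu_N^-=+\infty$, needed to close the $\I_N^-$ case for all real $\mu$; one should verify the barrier computation for the full frame rather than quoting the $k<N$ formula. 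Once these are in place the theorem follows by assembling Theorems \ref{max principle bar}, \ref{subsol mu}, Lemma \ref{mu=bar mu} and Proposition \ref{mu-inf}.
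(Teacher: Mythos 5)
Your treatment of $\I_k^+$ is exactly the paper's (unwritten) argument: the ``if'' direction and the finiteness come from Theorem \ref{max principle bar}, Lemma \ref{mu=bar mu} and Proposition \ref{mu-inf}(ii), and the ``only if'' direction uses the nonnegative subsolution of Theorem \ref{subsol mu}, which violates the maximum principle at $\mu=\mu_k^+=\bar\mu_k^+$ and hence (since $v\ge 0$) for all larger $\mu$. Likewise your handling of $\I_k^-$ for $k<N$ via Proposition \ref{mu-inf}(i) and Theorem \ref{max principle bar} is the intended route.

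The problem is your case $k=N$ for $\I_N^-$. There you claim that $\bar\mu_N^-=+\infty$, but this directly contradicts Proposition \ref{mu-inf}(ii), which proves $\bar\mu_N^-\le c_1 R_1^{-2s}<+\infty$ (via the argument of solving $-\I_N^- v=h$ and invoking Theorem \ref{max principle bar}). Your supporting computation also fails: to get $\I_N^- w+\mu w\le 0$ for $w(x)=e^{-\alpha|x|^2}$ you must exhibit \emph{one} frame $\{\xi_i\}$ with $\sum_i\mathcal{I}_{\xi_i}w(x)\le-\mu w(x)$, and any orthonormal frame in $\R^N$ necessarily contains a direction with a nonzero radial component; along the radial direction $\mathcal{I}_{\hat x}w(x)$ is \emph{positive} and decays only like a power of $|x|^{-2s}$ (the ray $x-\tau\hat x$ exits the Gaussian bump and $\delta(w,x,\tau\hat x)\approx -2w(x)+e^{-\alpha(|x|-\tau)^2}$ picks up a contribution of order one near $\tau=|x|$), while $\mu w(x)$ and the $N-1$ perpendicular contributions are exponentially small in $\alpha|x|^2$. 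So that frame does not work, consistent with the finiteness of $\bar\mu_N^-$. The honest reading of the theorem is that the second assertion concerns $k<N$, which is the only case in which the cited Proposition \ref{mu-inf} yields $\mu_k^-=\bar\mu_k^-=+\infty$; for $k=N$ the machinery of the paper (Theorem \ref{max principle bar}) only gives the maximum principle for $\mu<\bar\mu_N^-$, a finite threshold, and neither the paper nor your argument settles what happens above it. You should therefore restrict your $\I_k^-$ discussion to $k<N$ and delete the $\bar\mu_N^-=+\infty$ claim.
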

\begin{proof}
Immediately follows from Theorems \ref{max principle bar}-\ref{subsol mu}, Proposition \ref{mu-inf} and Lemma \ref{mu=bar mu}. 
\end{proof}

\section{H\"older estimates}\label{sec:holder}

\begin{proposition}\label{holder}
Let $u$ satisfy 
\begin{equation}\label{2406eq11}  \begin{cases}
\I_1^+ u(x) =f(x) & \text{in $\Omega$} \\
u=0 & \text{in $\R^N \setminus \Omega$}, 
\end{cases} \end{equation}
where $\Omega$ is a uniformly convex domain. If $s>\frac12$, then $u$ is H\"older continuous of order $2s-1$ in $\R^N$. 
\end{proposition}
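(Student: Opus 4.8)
The plan is to establish Hölder continuity by exploiting the one-dimensional structure of $\I_1^+$ together with the barrier estimate of Lemma \ref{barrier} (and its dual for $\I_1^-$), which controls $u$ near $\partial\Omega$ by $Cd(x)^s$. First I would record the two a priori bounds: since $\I_1^+u=f$ with $f$ bounded and $u=0$ outside $\Omega$, Lemma \ref{barrier} applied to $u$ gives $u(x)\le C d(x)^s$, and applied to $-u$ (using $\I_1^+(-u)=-\I_1^-(-u)\ge -\norm f_\infty$, hence $-\I_1^-(-u)\le\norm f_\infty$; note $\I_1^-w=\I_1^+w$ for $k=1$) gives $-u(x)\le C d(x)^s$, so $|u(x)|\le C d(x)^s$ for all $x\in\R^N$. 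This already yields the desired estimate for pairs of points that are both close to the boundary; the remaining work is an interior-to-boundary oscillation estimate.

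The core step is an incremental-quotient / translation argument in the one active direction. Fix a unit vector $e$ and $h>0$ small, and compare $u$ with its translate $u_h(x):=u(x+he)$, which solves $\I_1^+u_h=f(\cdot+he)$ on the translated domain. The key point is that for $k=1$ the operator acts along a single line, so the "bad" contribution of the translation is localized near the strip where $x$ or $x+he$ exits $\Omega$; there $|u|\lesssim d^{\,s}\lesssim h^{s}$ by the barrier bound, and the singular integral of this over the relevant $\tau$-range contributes an error of order $h^{2s-1}$ (this is exactly where $s>\tfrac12$ is used: $\int_h^{\mathrm{diam}\Omega}\tau^{s}\,\tau^{-1-2s}\,d\tau\sim h^{-s}$, times the $h^{s}$ size, gives $h^{2s-1}$; more precisely one splits $\int_0^\infty=\int_0^h+\int_h^\infty$ and estimates each piece). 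Then one runs the comparison principle Theorem \ref{comparison} (or rather its quantitative version via the barrier) on $w:=u-u_h$ on $\Omega\cap(\Omega-he)$: $w$ satisfies $\I_1^+w\ge -(\|f-f_h\|_\infty+\text{error})$-type inequalities up to the $\I_1^+/\I_1^-$ discrepancy, its exterior data are controlled by $Cd^{\,s}\le C h^{s}$, so by Lemma \ref{barrier}-type reasoning $|w|\le C(\|f\|_\infty,\Omega,s)\,h^{2s-1}$ throughout. Taking the supremum over unit $e$ and iterating/using that $u$ vanishes outside $\Omega$ upgrades this to the global bound $|u(x)-u(y)|\le C|x-y|^{2s-1}$.

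Two technical points need care and I expect the comparison between $\I_1^+$ and $\I_1^-$ to be the main obstacle. Since $w=u-u_h$ is a difference of a subsolution of $\I_1^+\cdot=f$ and (equivalently) a supersolution, and $\I_1^+$ is sublinear but not linear, one does not directly get $\I_1^+w\ge f-f_h$; one gets instead $\I_1^+u-\I_1^+u_h\le \I_1^+(u-u_h)$ is false in general, so the correct route is: $u$ is a subsolution of $\I_1^+\cdot\ge f-\|f\|_\infty$-robust inequalities, $u_h$ a supersolution, and one uses $\I_1^-(u-u_h)\le \I_1^+u-\I_1^+u_h$ together with the comparison principle applied to $\I_1^-$ (valid for all $\mu$, and with the barrier of Lemma \ref{funzione barriera} working identically for $\I_1^-$). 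Concretely: $u-u_h$ is a subsolution of $\I_1^-\,\cdot\,=f-f_h$ on the overlap domain, with exterior data $\le Ch^s$ near the overlap boundary and the strip-error absorbed; apply Lemma \ref{barrier} with $\I_1^-$ in place of $\I_1^+$ (its proof used only the barrier identity $\I_1^\pm v_y=-mC_s\beta$ and comparison, both available) to conclude $|u-u_h|\le Ch^{2s-1}$. The second point is purely bookkeeping: making the overlap-domain boundary contributions rigorous requires that $\Omega$ is uniformly convex so that $d(x)^s$ is a genuine supersolution barrier (as in Lemma \ref{barrier}), which is why the hypothesis is imposed.
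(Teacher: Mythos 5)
Your approach (incremental quotients: compare $u$ with its translate $u_h$ on the overlap domain) is genuinely different from the paper's, and it has a fatal gap. When you run the comparison machinery on $w=u-u_h$ over $D=\Omega\cap(\Omega-he)$, the doubling-of-variables argument produces, exactly as in the proof of Theorem \ref{comparison}, an inequality of the schematic form
\[
\sup_D (u-u_h)\;\lesssim\; \|f-f(\cdot+he)\|_{L^\infty(D)}+\sup_{\R^N\setminus D}(u-u_h)_+ .
\]
The exterior term is indeed $O(h^s)=o(h^{2s-1})$ by Lemma \ref{barrier}, but the term $\|f-f_h\|_\infty$ carries no smallness in $h$: the proposition assumes only that $f$ is bounded and continuous, so this term is $O(\|f\|_\infty)$ (or at best $\omega_f(h)$ with no rate). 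A translation argument can only transfer a modulus of continuity from the data to the solution; it has no mechanism to manufacture the specific exponent $2s-1$ out of mere boundedness of $f$. That exponent comes from an algebraic fact about the one-dimensional operator which your proposal never invokes: for $\theta\in(s,2s)$ the profile $v=-|x|^{2s-1}+|x|^{\theta}$ (suitably truncated) satisfies $\I_1^+v(x)\ge c_\theta C_s|x|^{\theta-2s}+O(1)\to+\infty$ as $x\to0$, because the $1$D $2s$-fractional Laplacian of $|t|^{2s-1}$ vanishes while that of $|t|^\theta$ is a positive multiple of $|t|^{\theta-2s}$. The paper's proof centers $u(y_0)+Lv(\cdot-y_0)$ at an arbitrary $y_0$, uses this blow-up to dominate $\|f\|_\infty$ in a small punctured ball, checks the exterior data with the $Cd^s$ bound of Lemma \ref{barrier}, and concludes by comparison in $\Omega\cap B_{\bar\rho}(y_0)\setminus\{y_0\}$. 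Without such a barrier (or an equivalent Ishii--Lions-type penalization $L|x-y|^{2s-1}$ whose one-dimensional integral produces the diverging good term), your scheme cannot close.

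Two further concrete errors. First, the identity \enquote{$\I_1^-w=\I_1^+w$ for $k=1$} is false: for $k=1$ these are the supremum and the infimum of $\I_\xi w$ over \emph{all} unit directions $\xi\in\mathcal S^{N-1}$, which differ whenever $N\ge2$. (The two-sided bound $|u|\le Cd^s$ is still recoverable, but via $\I_1^+(-u)=-\I_1^-u\ge-\I_1^+u=-f\ge-\|f\|_\infty$, not via that identity; in fact the paper only needs the upper bound $u(y_0)\le Cd(y_0)^s$.) Second, the displayed heuristic is arithmetically wrong: $\int_h^{\mathrm{diam}\,\Omega}\tau^{s}\tau^{-1-2s}\,d\tau\sim h^{-s}$ multiplied by $h^{s}$ gives $O(1)$, not $O(h^{2s-1})$ --- which is another symptom of the same missing mechanism.
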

\begin{proof}
It is sufficient to show that for any $x, y \in \overline{\Omega}$ such that $\abs{x-y} < \rho$, where $\rho=\rho(s,\left\|f\right\|_\infty)$ is a positive constant to be determined, then 
\begin{equation}\label{lip} u(x)-u(y) \le L \abs{x-y}^{2s-1} \end{equation}
with $L=L(\Omega, \norm{u}_\infty, \norm{f}_\infty,s)$. 
Fix $\theta\in(s,2s)$ and consider
\[ w(|x|)=-\abs{x}^{2s-1} + \abs{x}^\theta, \]
which has a minimum in 
\[ r_0= \left( \frac{2s-1}{\theta} \right)^{\frac{1}{\theta-2s+1}}\,. \]
Set
\begin{equation}\label{eq:v} v(x)= \begin{cases}
w(|x|) &\text{if } \abs{x} \le r_0\\
w(r_0) &\text{if } \abs{x} > r_0.
\end{cases}\end{equation}
We claim that there exists $\bar\rho=\bar\rho(s,\left\|f\right\|_\infty)$ sufficiently small such that 
\begin{equation}\label{claim holder} \I_1^+ v(x) \ge \left\|f\right\|_\infty\quad\forall x\in B_{\bar\rho}(0)\backslash\left\{0\right\}. \end{equation}  
In order to show \eqref{claim holder}, we fix $x \in B_{\bar\rho}(0)$, where $\bar\rho < r_0$ will be chosen later, and notice that it is sufficient to make computations in the parallel direction $I_{\hat x}v$, thus 
\begin{align*}
I_{\hat x} v (x)&= C_s \int_{0}^{+\infty} \frac{\delta(v, x, \tau \hat x)}{\tau^{1+2s}} \, d\tau  \\
&= C_s \Big( \int_{0}^{r_0 - \abs{x}} \frac{\delta(w, x, \tau \hat x)}{\tau^{1+2s}}\, d\tau + \int_{r_0 - \abs{x}}^{r_0 +\abs{x}} \frac{w(|x-\tau \hat x|) +w(r_0)-2 w(|x|) }{\tau^{1+2s}}\, d\tau \\
&\hspace{8cm}+2 \int_{r_0 + \abs{x}}^{+\infty} \frac{w(r_0) - w(x) }{\tau^{1+2s}}\, d\tau  \Big).
\end{align*}
We now add and subtract the integral
\[ C_s  \int_{r_0 - \abs{x}}^{+\infty} \frac{\delta(w, x, \tau \hat x)}{\tau^{1+2s}}\, d\tau ,  \]
and as a result 
\[ I_{\hat x}v(x)=C_s (J_1+J_2+J_3), \]
where 
\[ J_1= \int_{0}^{+\infty} \frac{\delta(w, x, \tau \hat x)}{\tau^{1+2s}}\, d\tau =-  \int_{0}^{+\infty} \frac{\delta(\abs{x}^{2s-1}, x, \tau \hat x)}{\tau^{1+2s}}\, d\tau  + \int_{0}^{+\infty} \frac{\delta(\abs{x}^{\theta}, x, \tau \hat x)}{\tau^{1+2s}} \, d\tau, \]
\[ J_2= \int_{r_0 + \abs{x}}^{+\infty} \frac{w(r_0) - w(|x-\tau \hat x|) }{\tau^{1+2s}} \, d\tau \]
and 
\[J_3= \int_{r_0 -\abs{x}}^{+\infty} \frac{w(r_0) - w(|x+\tau \hat x|) }{\tau^{1+2s}}\, d\tau. \]

Recall that 
\[ J_1= c_\theta \abs{x}^{\theta-2s}, \]
where $c_\theta>0$ as $\theta > 2s-1$, see Lemma 3.6 in \cite{BGT}. 
Moreover, using $w(r_0)<0$, 
\begin{align*} J_2&=
 \int_{r_0 + \abs{x}}^{+\infty} \frac{w(r_0)}{\tau^{1+2s}}\,d\tau - \int_{r_0 + \abs{x}}^{+\infty} \frac{w(|x-\tau \hat x|) }{\tau^{1+2s}}\,d\tau\\
&= \frac{1}{2s} w(r_0) (r_0 +\abs{x})^{-2s} +\int_{r_0 + \abs{x}}^{+\infty} \frac{\abs{\abs{x} - \tau}^{2s-1} - \abs{\abs{x} - \tau}^\theta}{\tau^{1+2s}}\,d\tau \\
 &\ge \frac{1}{2s}  w(r_0) (r_0 +\abs{x})^{-2s} - \abs{x}^{\theta-2s} \int_{r_0/\abs{x}+1}^{+\infty} \frac{\abs{1-\tau}^\theta}{\tau^{1+2s}}\,d\tau\\
 & \ge  \frac{1}{2s}  w(r_0) r_0^{-2s} - \abs{x}^{\theta-2s} \int_{r_0/\bar\rho+1}^{+\infty} \frac{\abs{1-\tau}^\theta}{\tau^{1+2s}}\,d\tau\\
&\ge  \frac{1}{2s}  w(r_0) r_0^{-2s} - \abs{x}^{\theta-2s} \int_{r_0/\bar\rho+1}^{+\infty}\tau^{\theta-1-2s}\,d\tau\\
&= \frac{1}{2s}  w(r_0) r_0^{-2s}-\frac{\abs{x}^{\theta-2s}}{2s-\theta}{\left(1+\frac{r_0}{\bar\rho}\right)}^{\theta-2s}\,.
 \end{align*}
Similarly for $\bar\rho<\frac{r_0}{2}$
\begin{align*} J_3&= \int_{r_0- \abs{x}}^{+\infty} \frac{w(r_0)}{\tau^{1+2s}}\,d\tau  - \int_{r_0- \abs{x}}^{+\infty}\frac{w(|x+\tau \hat x|) }{\tau^{1+2s}}\,d\tau  \\
& \ge \frac{1}{2s}  w(r_0) (r_0-\abs{x})^{-2s} - \abs{x}^{\theta-2s} \int_{r_0/\abs{x}- 1}^{+\infty}\frac{\abs{1+\tau}^\theta }{\tau^{1+2s}}\,d\tau  \\
& \ge \frac{1}{2s}  w(r_0) (r_0-\bar\rho)^{-2s} - \abs{x}^{\theta-2s} \int_{r_0/\bar\rho- 1}^{+\infty}\frac{\abs{1+\tau}^\theta }{\tau^{1+2s}}\,d\tau \\
&\ge \frac{1}{2s}  w(r_0) (r_0-\bar\rho)^{-2s} - 2^\theta\abs{x}^{\theta-2s} \int_{r_0/\bar\rho- 1}^{+\infty}\tau^{\theta-1-2s}\,d\tau\\
&=\frac{1}{2s}  w(r_0) (r_0-\bar\rho)^{-2s}-\frac{2^\theta\abs{x}^{\theta-2s} }{2s-\theta}{\left(\frac{r_0}{\bar\rho}-1\right)}^{\theta-2s}\,.
\end{align*}

Summing up, 
\begin{multline*} I_{\hat x}v(x) \ge C_s   \abs{x}^{\theta-2s} \Big( c_\theta -  \frac{1}{2s-\theta}{\left(1+\frac{r_0}{\bar\rho}\right)}^{\theta-2s} -\frac{2^\theta}{2s-\theta}{\left(\frac{r_0}{\bar\rho}-1\right)}^{\theta-2s} \\+\frac{1}{2s}  {\bar\rho}^{2s-\theta} w(r_0) \left( r_0^{-2s}+ (r_0-\bar\rho)^{-2s} \right) \Big). \end{multline*}

Since the expression in parenthesis tends to $c_\theta>0$ as $\bar\rho\to0$, then we can pick $\bar\rho=\bar\rho(s,\left\|f\right\|_\infty)$ sufficiently small such that 
\begin{equation}\label{2406eq6} \I_1^+ v(x) \ge \norm{f}_\infty \text{ in } B_{\bar\rho}(0) \setminus \{0\}. \end{equation}
This shows \eqref{claim holder}. 

Let $x_0, y_0 \in \overline \Omega$ with $\abs{x_0-y_0}< \bar\rho$ and take
\[ v_{y_0}(x)=u(y_0) + L v(x-y_0) \quad x \in B_{\bar\rho}(y_0), \]
where $L>0$. We want to prove that  there is $L=L(\Omega,\left\|u\right\|_\infty,\left\|f\right\|_\infty,s)$ sufficiently large such that 
\begin{equation}\label{2406eq5}
v_{y_0}(x_0)\leq u(x_0).
\end{equation}
This readily implies \eqref{lip} since $v_{y_0}(x_0)\geq u(y_0)-L|x_0-y_0|^{2s-1}$ and  $x_0,y_0$ are arbitrary points of $\overline\Omega$ with $\abs{x_0-y_0}< \bar\rho$.

 To obtain \eqref{2406eq5} we make use of the comparison principle, see Theorem \ref{comparison}, in $\Omega\cap B_{\bar\rho}(y_0)\backslash\left\{y_0\right\}$. By \eqref{2406eq6}, if $L\geq1$ then
\[ \I_1^+v_{y_0}(x) \ge \norm{f}_\infty \text{ in } B_{\bar\rho}(y_0) \setminus \{y_0\}, \]
hence $v_{y_0}$ is a subsolution of $\I_1^+v=f(x)$ in $B_{\bar\rho}(y_0) \setminus \{y_0\}$. As far as the exterior boundary condition is concerned, first notice that by definition $v_{y_0}(y_0)=u(y_0)$. Now let $x \in\R^N\backslash B_{\bar\rho}(y_0)$. Since the function $v(x)$ is radially decreasing it turns out that 
\[ v(x-y_0) \le - \bar\rho^{2s-1} + \bar\rho^\theta \]
and, for
\begin{equation}\label{2406eq7}
L\geq\frac{2\left\|u\right\|_\infty}{{\bar\rho}^{2s-1}-{\bar\rho}^\theta},
\end{equation}
that
\[ v_{y_0}(x)=u(y_0)+Lv(x-y_0) \le u(y_0)-L\bar\rho^{2s-1} + L\bar\rho^\theta \le u(y_0)-2 \norm{u}_\infty \le u(x). \]
It remains to prove the inequality $v_{y_0}(x)\leq u(x)$ for  $x \in  \overline{B_{\bar\rho}(y_0)} \cap \Omega^c$. For this we recall that by Lemma \ref{barrier} there exists a positive constant $C=C(\Omega,\left\|f\right\|_\infty,s)$ such that 
\begin{equation}\label{2406eq8}
u(y_0) \le C d(y_0)^s\leq C|x-y_0|^s\,.
\end{equation}
Notice that the function $r\in(0,+\infty)\mapsto r^{s-1} - r^{\theta - s}$ is decreasing, thus
\begin{equation}\label{2406eq10}
 r^{s-1} - r^{\theta - s}\geq \bar\rho^{s-1} - \bar\rho^{\theta - s}\quad\forall r\in(0,\bar\rho].
\end{equation}
Using \eqref{2406eq10} with $r=|x-y_0|$ and \eqref{2406eq8} we obtain, for  $x \in  \overline{B_{\bar\rho}(y_0)} \cap \Omega^c$, that 
\[ \begin{split} u(x)=0 &\ge u(y_0) - C\abs{x-y_0}^s \\
&\ge u(y_0) - L\abs{x-y_0}^{2s-1} + L \abs{x-y_0}^\theta=v_{y_0}(x)\end{split} \]
provided
\begin{equation}\label{2406eq9}
L\geq\frac{C}{{\bar\rho}^{s-1}-{\bar\rho}^{\theta-s}}\,.
\end{equation}
Summing up, by \eqref{2406eq7} and\eqref{2406eq9}, if 
$$
L\geq\max\left\{\frac{2\left\|u\right\|_\infty}{\bar\rho^{2s-1}-\bar\rho^\theta},\frac{C}{\bar\rho^{s-1}-\bar\rho^{\theta-s}},1\right\},
$$
then by comparison we conclude that \eqref{2406eq5} holds, as we wanted to show.
 \end{proof}

Let us point out that, as in the local setting (see \cite[Section 3]{BGI}), 
 the uniform convexity of $\Omega$ was just exploited in the proof of Proposition \ref{holder} to get \eqref{2406eq8}, hence to apply comparison principle up to the boundary. Moreover, in order to obtain interior H\"older estimates is in fact sufficient to assume the function $u$ to be only supersolution.

\begin{proposition} 
Let $\Omega$ be a bounded domain of $\R^N$, and let $s > \frac12$. Then:
\begin{itemize}
	\item[i)] for any compact $K\subset\Omega$ and any supersolution $u$ of \eqref{2406eq11}, there exists a positive constant $C=C(K,\Omega,\left\|u\right\|_\infty,\left\|f\right\|_\infty, s)$ such that $\left\|u\right\|_{C^{0,2s-1}(K)}\leq C$;
	\item[ii)] any supersolution $u$ which satisfies \eqref{2406eq12} is $(2s-1)$-H\"older continuous in $\overline\Omega$.
\end{itemize}
\end{proposition}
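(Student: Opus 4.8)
The plan is to revisit the proof of Proposition \ref{holder} and to isolate precisely where the two hypotheses (uniform convexity of $\Omega$, and $u$ a solution) are actually used. There, one fixes $y_0$, builds the barrier $v_{y_0}(x)=u(y_0)+Lv(x-y_0)$ with $v$ as in \eqref{eq:v}, checks via \eqref{2406eq6} that $v_{y_0}$ is a subsolution of $\I_1^+\cdot=f$ in $B_{\bar\rho}(y_0)\setminus\{y_0\}$ as soon as $L\ge 1$, and then obtains $v_{y_0}\le u$ in $\R^N$ by applying Theorem \ref{comparison} on $\Omega\cap B_{\bar\rho}(y_0)\setminus\{y_0\}$, which gives \eqref{lip}. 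Two observations drive the present proposition. First, in that comparison $u$ enters \emph{only} as the supersolution, so the argument is unchanged if $u$ is merely a supersolution; the resulting inequality reads $u(y_0)-u(x_0)\le L|x_0-y_0|^{2s-1}$, and interchanging $y_0$ with $x_0$ yields the two-sided bound $|u(x_0)-u(y_0)|\le L|x_0-y_0|^{2s-1}$. Second, the uniform convexity of $\Omega$ was invoked \emph{only} to produce \eqref{2406eq8}, i.e.\ $u(y_0)\le C\,d(y_0)^s$, through Lemma \ref{barrier}.

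For part (i), fix a compact $K\subset\Omega$ and let $\bar\rho=\bar\rho(s,\|f\|_\infty)$ be the radius of Proposition \ref{holder}; replace it, if necessary, by a smaller positive number so that $\overline{B_{\bar\rho}(y_0)}\subset\Omega$ for every $y_0\in K$ (this does not spoil \eqref{2406eq6}, which is automatically inherited on smaller balls). Then $\Omega\cap B_{\bar\rho}(y_0)\setminus\{y_0\}=B_{\bar\rho}(y_0)\setminus\{y_0\}$, so there is no portion of $\R^N\setminus\Omega$ inside the ball to control: only $v_{y_0}(y_0)=u(y_0)$ (automatic) and $v_{y_0}\le u$ on $\R^N\setminus B_{\bar\rho}(y_0)$ (valid for $L\ge 2\|u\|_\infty/(\bar\rho^{2s-1}-\bar\rho^\theta)$ exactly as in the original proof, using only $\|u\|_\infty$) are needed, and Lemma \ref{barrier}, hence convexity, plays no role. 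The comparison principle then gives, for $x_0,y_0\in K$ with $|x_0-y_0|<\bar\rho$, the bound $|u(x_0)-u(y_0)|\le L|x_0-y_0|^{2s-1}$, while for $|x_0-y_0|\ge\bar\rho$ one uses $|u(x_0)-u(y_0)|\le 2\|u\|_\infty\le 2\|u\|_\infty\bar\rho^{-(2s-1)}|x_0-y_0|^{2s-1}$. Since $\bar\rho$ and $L$ depend only on $K,\Omega,\|u\|_\infty,\|f\|_\infty,s$, this yields $\|u\|_{C^{0,2s-1}(K)}\le C$.

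For part (ii), one runs the proof of Proposition \ref{holder} essentially verbatim, with $\bar\rho=\bar\rho(s,\|f\|_\infty)$ as there and $x_0,y_0\in\overline\Omega$: the estimates for $J_1,J_2,J_3$ (hence the subsolution property of $v_{y_0}$ via \eqref{2406eq6}), the control of $v_{y_0}$ outside $B_{\bar\rho}(y_0)$, and the bound on $\overline{B_{\bar\rho}(y_0)}\cap\Omega^c$ are all untouched; the only ingredient previously extracted from uniform convexity, namely $u(y_0)\le C\,d(y_0)^s\le C|x-y_0|^s$ for $x\in\Omega^c$, is now exactly the standing hypothesis \eqref{2406eq12}. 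Choosing $L\ge\max\{2\|u\|_\infty/(\bar\rho^{2s-1}-\bar\rho^\theta),\,C/(\bar\rho^{s-1}-\bar\rho^{\theta-s}),\,1\}$ and applying Theorem \ref{comparison} on the bounded set $\Omega\cap B_{\bar\rho}(y_0)\setminus\{y_0\}$ (legitimate since $\I_1^+$ carries no zero order term, so the smallness condition on $\|c^+\|_\infty$ is vacuous) gives $v_{y_0}\le u$ in $\R^N$, hence $u(y_0)-u(x_0)\le L|x_0-y_0|^{2s-1}$ and, by the interchange of base point, $|u(x_0)-u(y_0)|\le L|x_0-y_0|^{2s-1}$ for $|x_0-y_0|<\bar\rho$; the complementary range is handled by the trivial bound as above. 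Therefore $u\in C^{0,2s-1}(\overline\Omega)$.

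The argument is thus mostly bookkeeping, and the one genuinely substantive point — where I expect any subtlety to reside — is the remark that the barrier/comparison scheme of Proposition \ref{holder} is intrinsically one-sided in the unknown (it bounds $u$ below by $v_{y_0}$ and uses $u$ solely as a supersolution), so that it extends to supersolutions, with the full two-sided Hölder estimate recovered for free by exchanging the base point $y_0$ and the evaluation point $x_0$. Beyond this, one only needs to verify that shrinking $\bar\rho$ in part (i) preserves \eqref{2406eq6} and that the final constants depend exactly on the quantities listed in the statement.
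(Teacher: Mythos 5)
Your proposal is correct and follows exactly the route the paper intends: the paper justifies this proposition by the remark preceding it, namely that uniform convexity enters the proof of Proposition \ref{holder} only through the bound \eqref{2406eq8} (now assumed as \eqref{2406eq12} in (ii), and unnecessary in (i) once $\bar\rho$ is shrunk so that $\overline{B_{\bar\rho}(y_0)}\subset\Omega$ for $y_0\in K$), and that $u$ enters the comparison only as a supersolution, the two-sided estimate following by exchanging base and evaluation points. Your write-up is a faithful and correctly detailed expansion of that argument, with the constants depending on the stated quantities.
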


In the next theorem we obtain global H\"older equicontinuity of sequences of solutions with uniformly bounded right hand sides. We shall use it in the next section for the existence of a principal eigenfuntion.

\begin{theorem}\label{equiHolder}
Let $s > \frac12$, and let $u_n\in C(\overline\Omega)\cap L^\infty(\R^N)$ be solutions of 
\begin{equation*}
\begin{cases}
\I^+_1u_n=f_n(x) & \text{in $\Omega$}\\
u_n=0  & \text{in $\R^N\backslash\Omega$},
\end{cases}
\end{equation*}
where the domain $\Omega$ is uniformly convex and $f_n\in C(\Omega)$ for any $n\in\mathbb N$. Assume that there exists a positive constant $D$ such that
\begin{equation}\label{2506eq1}
\sup_{n\in\mathbb N}\left\|u_n\right\|_{L^\infty{(\R^N\backslash\Omega)}}+\left\|f_n\right\|_{L^\infty{(\Omega)}}\leq D.
\end{equation}
Then there exists $ \tilde D=\tilde D(D,\Omega,s)>0$  such that 
\begin{equation}\label{2506eq4}
\sup_{n\in\mathbb N}\left\|u_n\right\|_{C^{0,2s-1}(\mathbb R^N)}\leq \tilde D.
\end{equation}
\end{theorem}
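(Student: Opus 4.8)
The plan is to promote the qualitative Proposition \ref{holder} to a quantitative one in which every constant depends on the data only through $\Omega$, $s$ and the single bound $D$; global equicontinuity of $(u_n)_n$ is then immediate. First I would record a uniform $L^\infty$ bound together with a uniform boundary decay. Since $u_n$ solves $\I_1^+u_n=f_n$ with $\|f_n\|_{L^\infty(\Omega)}\le D$, it satisfies $\I_1^+u_n\ge -D$ in $\Omega$; moreover $-u_n$ satisfies $\I_1^+(-u_n)\ge\I_1^-(-u_n)=-\I_1^+u_n=-f_n\ge-D$ in $\Omega$, using $\I_1^+\ge\I_1^-$. Applying Lemma \ref{barrier} with $m=D$ to both $u_n$ and $-u_n$ (here is where uniform convexity of $\Omega$ enters) produces a constant $C=C(\Omega,D,s)$, independent of $n$, with $|u_n(x)|\le C\,d(x)^s$ for all $x\in\overline\Omega$; in particular $\|u_n\|_{L^\infty(\R^N)}\le C(\text{diam}(\Omega))^s=:D_0$ uniformly in $n$.

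Next I would re-run the proof of Proposition \ref{holder} tracking constants. There, the radius $\bar\rho$ only has to ensure $\I_1^+v\ge\|f\|_\infty$ on $B_{\bar\rho}(0)\setminus\{0\}$, and the lower bound obtained for $\I_1^+v(x)$ is $C_s|x|^{\theta-2s}(c_\theta+o_{\bar\rho}(1))$ with $\theta<2s$, hence diverges as $\bar\rho\to0$; so $\bar\rho$ may be taken depending only on $s$ and $D$, and we may also require $\bar\rho<1$. With this $\bar\rho$, with the uniform bounds $\|u_n\|_\infty\le D_0$ and $\|f_n\|_\infty\le D$, and with the estimate $u_n(y_0)\le C\,d(y_0)^s$ from the first step playing the role of \eqref{2406eq8}, the comparison argument of that proof (which is what uses uniform convexity up to $\partial\Omega$) yields a constant $L=L(\Omega,D,s)$, independent of $n$, such that $u_n(x)-u_n(y)\le L|x-y|^{2s-1}$ whenever $x,y\in\overline\Omega$ and $|x-y|<\bar\rho$; exchanging $x$ and $y$ gives $|u_n(x)-u_n(y)|\le L|x-y|^{2s-1}$ on $\overline\Omega$ for $|x-y|<\bar\rho$.

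Finally I would patch this into a global bound on $\R^N$ by cases. If $x,y\notin\Omega$ there is nothing to prove since $u_n\equiv0$ there. Whenever $|x-y|\ge\bar\rho$ one has $|u_n(x)-u_n(y)|\le 2D_0\le 2D_0\bar\rho^{-(2s-1)}|x-y|^{2s-1}$. If $x\in\Omega$, $y\notin\Omega$ and $|x-y|<\bar\rho$, then $d(x)\le|x-y|<1$ and, since $s>2s-1$, $|u_n(x)-u_n(y)|=|u_n(x)|\le C\,d(x)^s\le C|x-y|^s\le C|x-y|^{2s-1}$. Together with the previous paragraph (the remaining case $x,y\in\overline\Omega$, $|x-y|<\bar\rho$) this gives $[u_n]_{C^{0,2s-1}(\R^N)}\le\max\{L,C,2D_0\bar\rho^{-(2s-1)}\}$, and adding $\|u_n\|_{L^\infty(\R^N)}\le D_0$ yields \eqref{2506eq4} with $\tilde D=\tilde D(D,\Omega,s)$.

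I do not expect a genuinely new analytic obstacle here: the content is essentially that the proof of Proposition \ref{holder} is quantitative and stable under the sole a priori bound $D$. The points requiring care are the uniform choice of $\bar\rho$ (legitimate precisely because the lower bound for $\I_1^+v$ blows up as $\bar\rho\to0$), the use of Lemma \ref{barrier} for $-u_n$ as well, so that $|u_n|$ and not merely $u_n$ is controlled near $\partial\Omega$ — this is what makes the mixed boundary case close — and, as in Proposition \ref{holder}, the fact that the comparison principle must be applied up to $\partial\Omega$, which is exactly where uniform convexity of $\Omega$ is used.
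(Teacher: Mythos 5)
Your proposal is correct and follows essentially the same route as the paper: a uniform $L^\infty$ and boundary-decay bound from the barrier/Lemma \ref{barrier} (the paper gets the $L^\infty$ bound by direct comparison with $\frac{D}{C_s\beta(1-s,s)}(R^2-|x|^2)^s_+$, which is the same idea as your application of Lemma \ref{barrier} to $\pm u_n$), followed by re-running the proof of Proposition \ref{holder} with $\bar\rho=\bar\rho(s,D)$ and $L=L(\Omega,D,s)$ chosen uniformly in $n$. Your explicit case analysis for gluing the local estimate into a global $C^{0,2s-1}(\R^N)$ bound is a detail the paper compresses into ``this readily implies,'' but it matches what is intended.
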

\begin{proof}
We start by showing that $\sup_n\left\|u_n\right\|_{L^\infty(\R^N)}<+\infty$. Let $R$, just depending on $\Omega$, be such that $B_R(0)\supseteq\Omega$ and consider the function
$$
\varphi(x)=\frac{D}{C_s\beta(1-s,s)}{\left(R^2-|x|^2\right)}^s_+.
$$
By Lemma \ref{funzione barriera}, $\varphi$ solves
\begin{equation*}
\begin{cases}
\I^+_1\varphi=-D & \text{in $\Omega$}\\
\varphi\geq0 & \text{in $\R^N\backslash\Omega$}.
\end{cases}
\end{equation*}
For any $n\in\mathbb N$, using \eqref{2506eq1}, $u_n$ is solution of 
\begin{equation*}
\begin{cases}
\I^+_1u_n\geq-D & \text{in $\Omega$}\\
u_n=0 & \text{in $\R^N\backslash\Omega$}.
\end{cases}
\end{equation*}
Hence by the comparison Theorem \ref{comparison} we get 
\begin{equation}\label{2506eq2}
u_n(x)\leq \varphi(x)\leq \frac{DR^{2s}}{C_s\beta(1-s,s)} \quad\forall x\in\Omega.
\end{equation}
In a similar fashion we also obtain
\begin{equation}\label{2506eq3}
u_n(x)\geq-\frac{DR^{2s}}{C_s\beta(1-s,s)} \quad\forall x\in\Omega.
\end{equation}
From  \eqref{2506eq2}-\eqref{2506eq3} and \eqref{2506eq1}  we infer that $\sup_n\left\|u_n\right\|_{L^\infty(\R^N)}<+\infty$, in fact
$$
\sup_n\left\|u_n\right\|_{L^\infty(\R^N)}\leq \max\left\{D,\frac{DR^{2s}}{C_s\beta(1-s,s)}\right\}.
$$
Arguing as in the proof of Proposition \ref{holder}, with the same notations there used, and $v$ as defined in \eqref{eq:v}, we can pick $\bar\rho=\bar\rho(s,D)$ such that 
$$
\I^+_1v(x)\geq D\quad\text{in $B_{\bar\rho}(0)\backslash\left\{0\right\}$}.
$$
Moreover by Lemma \ref{barrier} there exists a positive constant $C=C(\Omega,D,s)$ such that 
\begin{equation*}
u_n(x) \le C d(x)^s\quad\forall x\in\overline\Omega.
\end{equation*}
Hence by taking
$$
L\geq\max\left\{\frac{2\sup_n\left\|u_n\right\|_\infty}{\bar\rho^{2s-1}-\bar\rho^\theta},\frac{C}{\bar\rho^{s-1}-\bar\rho^{\theta-s}},1\right\}
$$
we conclude that for any $n\in\mathbb N$ and  any $x,y\in\overline\Omega$ such that $|x-y|\leq\bar\rho$ then $$u_n(x)-u_n(y)\leq L|x-y|^{2s-1}.$$ 
This readily implies \eqref{2506eq4}.
\end{proof}

\section{Existence of  a principal eigenfunction}\label{sec:existence}

%%%
The main result of this section is the following 
\begin{theorem}\label{existence}
Let $\Omega$ be a uniformly convex domain, and let $s > \frac12$. Then there exists a positive function $\psi_1 \in C^{0,2s-1}(\overline \Omega)$ such that 
\begin{equation}\label{DirEigen} \begin{cases}
\I_1^+ \psi_1 + \mu_1^+ \psi_1=0 &\text{ in } \Omega \\
\psi_1=0 &\text{ in } \R^N \setminus \Omega.
\end{cases} \end{equation}
\end{theorem}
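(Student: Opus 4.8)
The plan is to obtain $\psi_1$ as a limit of solutions of approximating Dirichlet problems whose right-hand sides are constant, rescaled to have unit supremum norm, and to use the Hölder estimates of Section~\ref{sec:holder} to pass to the limit. Concretely, for $n\in\N$ let $\mu_n=\mu_1^+-\frac1n<\mu_1^+=\bar\mu_1^+$ (the equality by Lemma~\ref{mu=bar mu}), and, using Theorem~\ref{lem dirichlet}, let $w_n\in C(\R^N)$ be the unique solution of
\[
\begin{cases}
\I_1^+ w_n +\mu_n w_n = -1 & \text{in }\Omega\\
w_n=0 & \text{in }\R^N\setminus\Omega,
\end{cases}
\]
which exists because the map $u\mapsto \I_1^+u+\mu_n u +1$ fits the Perron scheme once one absorbs the zeroth-order term $\mu_n u$ (alternatively one runs a fixed-point/monotone iteration on $\I_1^+ w = -1-\mu_n w$, using comparison Theorem~\ref{comparison}; here $\mu_n<\mu_1^+<+\infty$ by Proposition~\ref{mu-inf}(ii)). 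Since $\mu_n<\bar\mu_1^+$, Theorem~\ref{max principle bar} applies and gives $w_n\ge 0$; in fact the strong minimum principle (Theorem~\ref{SMP}(iii), or Corollary~\ref{cor hopf}) gives $w_n>0$ in $\Omega$.

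The key point is that $\|w_n\|_\infty\to+\infty$. Indeed, if along a subsequence $\|w_n\|_\infty\le C$, then each $w_n$ solves $\I_1^+w_n\ge -1-\mu_1^+C$ in $\Omega$ with $w_n\ge0$ outside; this is exactly the situation of Theorem~\ref{subsol mu}, whose proof produces in this way a nonnegative subsolution $v\not\equiv0$ of $\I_1^+v+\bar\mu_1^+v\ge0$ in $\Omega$, $v=0$ outside $\Omega$ — more precisely, one normalizes $z_n=w_n/\|w_n\|_\infty$, uses Lemma~\ref{barrier} to keep the maxima of $z_n$ away from $\partial\Omega$, and passes to the limit via Lemma~\ref{stability1}; but actually here it is cleaner to argue directly: if $\|w_n\|_\infty$ stayed bounded, then $(U_n)_*+\varepsilon$ with $U_n=\sup\{w\in A_n\}$ would be a strictly positive supersolution of $\I_1^+\cdot+(\bar\mu_1^++\frac1n)\cdot$ for large $n$ and small $\varepsilon$, contradicting the definition of $\bar\mu_1^+$. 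So $\|w_n\|_\infty\to+\infty$, and we set $\psi_n=w_n/\|w_n\|_\infty$, which satisfies
\[
\I_1^+\psi_n+\mu_n\psi_n = -\frac1{\|w_n\|_\infty}\longrightarrow 0 \quad\text{in }\Omega,\qquad \psi_n=0 \text{ in }\R^N\setminus\Omega,
\]
with $0\le\psi_n\le1$, $\sup_\Omega\psi_n=1$.

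Now I invoke the equi-Hölder estimate: the right-hand sides $f_n:=-\mu_n\psi_n-\|w_n\|_\infty^{-1}$ are uniformly bounded (by $|\mu_1^+|+1$), so Theorem~\ref{equiHolder} gives $\sup_n\|\psi_n\|_{C^{0,2s-1}(\R^N)}\le \tilde D<+\infty$. By Arzelà–Ascoli and a diagonal argument there is $\psi_1\in C^{0,2s-1}(\overline\Omega)$ with $\psi_n\to\psi_1$ uniformly on $\overline\Omega$ (hence on $\R^N$, since all functions vanish outside $\Omega$), $0\le\psi_1\le1$, $\psi_1=0$ on $\R^N\setminus\Omega$, and $\max_{\overline\Omega}\psi_1=1$ because the maxima of $\psi_n$ are uniformly interior by Lemma~\ref{barrier}. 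The stability Lemma~\ref{stability1} (applied with $f_n\to0$, noting $\mu_n\to\mu_1^+$ and $\mu_n\psi_n\to\mu_1^+\psi_1$ uniformly so the zeroth-order term passes to the limit) shows $\psi_1$ is both a sub- and a supersolution of $\I_1^+\psi_1+\mu_1^+\psi_1=0$ in $\Omega$; by uniform convergence it is continuous, hence a viscosity solution of \eqref{DirEigen}. Finally, since $\psi_1\ge0$, $\psi_1\not\equiv0$, $\I_1^+\psi_1=-\mu_1^+\psi_1\le0$ in $\Omega$ (note $\mu_1^+>0$ by Proposition~\ref{mu-inf}(ii)), Theorem~\ref{SMP}(iii) gives $\psi_1>0$ in $\Omega$, completing the proof.

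The main obstacle is the blow-up step $\|w_n\|_\infty\to+\infty$, i.e. tying the approximating problems to the definition of $\bar\mu_1^+$ through the Perron-type envelope $U_n$ exactly as in Theorem~\ref{subsol mu}; one must be careful that Lemma~\ref{barrier} is what prevents the normalized maxima from escaping to $\partial\Omega$ (so that the limit is genuinely nontrivial and not absorbed into the boundary), and that the zeroth-order term $\mu_n\psi_n$ is handled in the stability lemma — which is why uniform rather than merely locally uniform convergence, furnished by Theorem~\ref{equiHolder}, is used.
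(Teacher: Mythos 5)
Your overall architecture coincides with the paper's: solve $\I_1^+ w_n + (\mu_1^+ - \frac1n) w_n = -1$ (the paper does this by invoking Theorem \ref{existencebelow}, whose monotone-iteration proof you essentially sketch in your parenthesis), show $\norm{w_n}_\infty \to +\infty$, normalize, and pass to the limit using Theorem \ref{equiHolder}, Lemma \ref{barrier}, Lemma \ref{stability1} and the strong minimum principle. All of that part is fine. The one step that is not correctly justified as written is the blow-up claim $\norm{w_n}_\infty\to+\infty$, and neither of the two arguments you offer for it actually closes the loop.

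The first argument (``this is exactly the situation of Theorem \ref{subsol mu}, whose proof produces a nonnegative subsolution $v\not\equiv0$ of $\I_1^+v+\bar\mu_1^+v\ge0$'') does not end in a contradiction: producing such a subsolution is precisely the true conclusion of Theorem \ref{subsol mu}, so nothing is contradicted. The second, ``direct'' argument via $(U_n)_*+\varepsilon$ with $U_n=\sup A_n$ transplants the contradiction step of Theorem \ref{subsol mu} under the wrong hypothesis: there the assumption being refuted is that \emph{every} sequence of subsolutions in $A_n$ has bounded norms, which is what makes $U_n$ finite and uniformly bounded (Steps 1--3) and what feeds the bump construction in Step 5. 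Boundedness of your particular solutions $w_n$ does not by itself control $\sup A_n$; to get that you would need the additional comparison argument (as at the end of Theorem \ref{existencebelow}) showing that every subsolution lies below the solution $w_n$. The fix is much simpler and is what the paper does: if $\sup_n\norm{w_n}_\infty<+\infty$, choose $j\ge 2\sup_n\norm{w_n}_\infty$; then $\I_1^+w_j+\bigl(\mu_1^+ +\tfrac1j\bigr)w_j=-1+\tfrac2j w_j\le -1+\tfrac{2}{j}\norm{w_j}_\infty\le 0$ in $\Omega$, and since $w_j>0$ in $\Omega$ and $w_j\ge0$ in $\R^N$, this exhibits an admissible function for the level $\mu_1^+ +\tfrac1j$ in the definition of $\mu_1^+$, contradicting its maximality. (Equivalently, your own $w_j+\varepsilon$ --- rather than $(U_j)_*+\varepsilon$ --- works against the definition of $\bar\mu_1^+$, since $w_j$ is already a supersolution and $\inf_\Omega(w_j+\varepsilon)=\varepsilon>0$.) With this repair the proof is complete and is the paper's proof.
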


For this we first prove the solvability of the Dirichlet problem \lq\lq below the principal eigenvalue\rq\rq.

\begin{theorem}\label{existencebelow}
Let $\Omega$ be a uniformly convex domain, $s > \frac12$, and let $f\in C(\Omega)\cap L^\infty(\Omega)$. Then there exists a solution $u\in C^{0,2s-1}(\overline\Omega)$ of 
\begin{equation}\label{2506eq6}
\begin{cases}
\I^+_1u+\mu u=f(x) & \text{in $\Omega$}\\
u=0  & \text{in $\R^N\backslash\Omega$},
\end{cases}
\end{equation}
in the following cases:
\begin{itemize}
	\item[(i)] for any $\mu$ if $f\geq0$
	\item[(ii)] for any $\mu<\mu^+_1$.
\end{itemize}
In the case $\mu<\mu^+_1$ the solution is unique.
\end{theorem}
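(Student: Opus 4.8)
\emph{Strategy.} The plan is to reformulate \eqref{2506eq6} as a fixed point problem and apply the Leray--Schauder (Schaefer) fixed point theorem: compactness will come from the H\"older estimates of Section \ref{sec:holder}, the a priori bound in case (ii) from the maximum principle Theorem \ref{max principle bar}, and the a priori bound in case (i) from the sign of $f$ together with a rescaling device in the spirit of Theorem \ref{subsol mu}. Uniqueness for $\mu<\mu_1^+$ will be read off from (the doubling-of-variables argument underlying) Theorem \ref{max principle bar}.

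\emph{The fixed point map.} For $g\in C(\overline\Omega)$ let $Tg\in C(\R^N)$ be the unique solution, given by Theorem \ref{lem dirichlet}, of $\I_1^+(Tg)=f-\mu g$ in $\Omega$ with $Tg=0$ in $\R^N\setminus\Omega$; note $f-\mu g$ is bounded and continuous, a fixed point of $T$ is exactly a solution of \eqref{2506eq6}, and by Proposition \ref{holder} it then lies in $C^{0,2s-1}(\overline\Omega)$. By Theorem \ref{equiHolder}, $T$ maps bounded subsets of $C(\overline\Omega)$ into bounded subsets of $C^{0,2s-1}(\R^N)$, hence into precompact subsets of $C(\overline\Omega)$, and together with the stability Lemma \ref{stability1} (which gives continuity of $T$) this makes $T$ compact. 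It remains to bound, uniformly in $\sigma\in[0,1]$, the solutions of $u=\sigma Tu$, i.e. of $\I_1^+u+\sigma\mu u=\sigma f$ in $\Omega$, $u=0$ in $\R^N\setminus\Omega$.

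\emph{Case (ii).} Suppose $\|u_n\|_{L^\infty(\R^N)}\to\infty$ along such solutions with parameters $\sigma_n$. Then $v_n:=u_n/\|u_n\|_{L^\infty(\R^N)}$ solves $\I_1^+v_n+\sigma_n\mu v_n=\sigma_nf/\|u_n\|_{L^\infty(\R^N)}$, whose right-hand side tends to $0$; by Theorem \ref{equiHolder} and Lemma \ref{stability1} a subsequence converges uniformly to some $v$, with $\sigma_n\to\sigma_0\in[0,1]$, solving $\I_1^+v+\sigma_0\mu v=0$ in $\Omega$, $v=0$ in $\R^N\setminus\Omega$, $\|v\|_{L^\infty(\R^N)}=1$. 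Since $\sigma_0\mu\le\max\{0,\mu\}<\mu_1^+=\bar\mu_1^+$ (recall $\mu_1^+>0$ by Proposition \ref{mu-inf}), this contradicts Theorem \ref{max principle bar}: if $\sup_\Omega v>0$ apply it to $v$, and if $\inf_\Omega v<0$ apply it to $-v$, which satisfies $\I_1^+(-v)+\sigma_0\mu(-v)\ge0$ because $\I_1^+(-v)=-\I_1^-v\ge-\I_1^+v=\sigma_0\mu v$. Hence the a priori bound holds and Schaefer's theorem yields a solution; uniqueness follows because, if $u_1,u_2$ both solve \eqref{2506eq6}, then running the doubling-of-variables computation of Theorem \ref{max principle bar} for the sub/supersolution pair $(u_1,u_2)$ gives $u_1\le u_2$, and symmetrically.

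\emph{Case (i), and the main obstacle.} When $f\ge0$ the constant $\overline u\equiv0$ is a supersolution of \eqref{2506eq6}, and for $\mu<\mu_1^+$ the statement is a special case of (ii), so assume $\mu\ge\mu_1^+>0$. I would iterate: $u_0\equiv0$, and $u_{n+1}$ the solution of $\I_1^+u_{n+1}=f-\mu u_n$ in $\Omega$, $u_{n+1}=0$ outside (Theorem \ref{lem dirichlet}); since $\I_1^+u_1=f\ge0$ the weak maximum principle gives $u_1\le0=u_0$, and since $u\mapsto f-\mu u$ is nonincreasing, Theorem \ref{comparison} yields inductively $u_{n+1}\le u_n\le0$. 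If $(\|u_n\|_{L^\infty})_n$ is bounded, the H\"older estimates and Lemma \ref{stability1} produce a limit solving \eqref{2506eq6} with $u=0$ outside (using Lemma \ref{barrier} and the uniform convergence). The crux is to exclude blow-up: if $\|u_n\|_{L^\infty}\to\infty$, rescaling as in the proof of Theorem \ref{subsol mu} and using Lemma \ref{barrier} together with the stability lemmas would yield a nontrivial solution $z$ of $\I_1^+z+\nu z=0$ in $\Omega$, $z=0$ outside, for some $\nu\in(0,\mu]$, which one must rule out. I expect precisely this a priori bound for $\mu\ge\mu_1^+$ to be the main difficulty: there neither the maximum principle nor the explicit $(R^2-|x-y|^2)_+^s$ barriers cover the whole range, so one has to exploit the sign of $f$ and the exhaustion/rescaling mechanism of Theorem \ref{subsol mu}; the rest is an assembly of Theorems \ref{comparison}, \ref{lem dirichlet}, \ref{equiHolder}, \ref{max principle bar} and Lemma \ref{barrier} already available.
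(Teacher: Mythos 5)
Your case (i) has a genuine gap, and it is exactly at the point you flag as ``the main obstacle'': you set up the monotone iteration $u_{n+1}\le u_n\le 0$ and the normalization under blow-up correctly, but you stop at a nontrivial limit $z$ of $\I_1^+z+\nu z=0$, $z=0$ outside $\Omega$, without ruling it out. The missing idea is to use the sign information you have already established. Since every iterate is nonpositive, the normalized limit satisfies $z\le 0$ with $\|z\|_\infty=1$. Writing $\I_1^+z=-\I_1^-(-z)$, the function $-z\ge 0$ solves $\I_1^-(-z)+\nu(-z)=0$ in $\Omega$ with $-z=0$ outside. By Proposition \ref{mu-inf}(i), $\bar\mu_1^-=+\infty$, so Theorem \ref{max principle bar} (equivalently Theorem \ref{max principle}) gives the maximum principle for $\I_1^-+\nu$ for \emph{every} $\nu$; applied to $-z$ it forces $-z\le 0$, hence $z\equiv 0$, contradicting $\|z\|_\infty=1$. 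This is precisely how the paper closes case (i): the operator $\I_1^+$ is replaced by $\I_1^-$ on the (sign-definite) blow-up limit, where no spectral restriction on $\mu$ is needed. Note also that to conclude $\|z\|_\infty=1$ in the limit you need the uniform convergence furnished by Theorem \ref{equiHolder}, not just the half-relaxed limits of Lemma \ref{stability1}.

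Your case (ii) is a genuinely different and viable route. The paper proceeds by monotone iteration: it first solves the two constant-right-hand-side problems with data $\mp\|f\|_\infty$ (the nonnegative one bounded via the same normalization argument, now using $\mu<\mu_1^+=\bar\mu_1^+$), then traps the iterates for general $f$ between $\underline w$ and $\overline w$ and passes to the limit with the H\"older estimates. Your Leray--Schauder scheme replaces this with an a priori bound for the homotopy $\I_1^+u+\sigma\mu u=\sigma f$, obtained by normalizing a blowing-up sequence and applying Theorem \ref{max principle bar} to both $v$ and $-v$ (the inequality $\I_1^+(-v)\ge -\I_1^+v$ is what makes the second application legitimate); compactness and continuity of $T$ follow from Theorem \ref{equiHolder}, Lemma \ref{stability1} and uniqueness in Theorem \ref{lem dirichlet}. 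This buys you a proof that does not need the ordered barriers $\underline w\le 0\le\overline w$, at the cost of verifying the fixed-point hypotheses. For uniqueness, be aware that Theorem \ref{max principle bar} is stated for a single subsolution, so you must first show that $w=u_1-u_2$ is itself a viscosity subsolution of $\I_1^+w+\mu w\ge 0$ via a doubling-of-variables argument as in Theorem \ref{comparison}; this is the same step the paper carries out.
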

\begin{proof}
 We can assume $\mu>0$, since the arguments of the proof of Theorem \ref{lem dirichlet} continue to apply for $\I^\pm_k+\mu u$ when $\mu\leq0$. \\

(i) Let $w_1=0$ and define iteratively $w_{n+1} \in C(\R^N)$ as the solution, obtained by Theorem \ref{lem dirichlet}, of
\begin{equation}\label{limit 1} \begin{cases} 
\I_1^+ w_{n+1} = f(x) - \mu w_n(x) &\text{ in } \Omega \\
w_{n+1}=0 &\text{ in } \R^N \setminus \Omega.
\end{cases} \end{equation}
Note that the sequence $(w_n)_n$ is nonincreasing and in particular $w_n \le 0$ for any $n$. 
Indeed, since $f\geq0$ then  $w_2\leq0=w_1$ by  Theorem \ref{comparison}. Moreover assuming by induction $w_{n+1}\le w_n$, one has 
\[ \I_1^+ w_{n+2} = f- \mu w_{n+1} \ge f- \mu w_{n} = \I_1^+ w_{n+1}, \]
hence again by comparison $w_{n+2} \le w_{n+1}$. \\
We now show that $\sup_n \norm{w_n}_\infty < +\infty$. If this is true, then in view of Theorem \ref{equiHolder}, the sequence $(w_n)_n$ converges uniformly in $\R^N$ to $u\in C^{0,2s-1}(\R^N)$, and passing to the limit in \eqref{limit 1} we conclude, exploiting Lemma \ref{stability1}. 
Let us assume by contradiction that $\lim_{n\to+\infty} \norm{w_n}_\infty = +\infty$, and let $v_n=\frac{w_n}{\norm{w_n}}$. Then 
\[ \begin{cases}
\I_1^+ v_{n+1}=\frac{f(x)}{\norm{w_{n+1}}} - \mu \frac{\norm{w_n}}{\norm{w_{n+1}}}v_n(x)  &\text{ in } \Omega \\
v_{n+1}=0 &\text{ in } \R^N \setminus \Omega.
\end{cases} \] 
Then again by the H\"older estimate \eqref{2506eq4} the sequence $(v_n)_n$ converges uniformly, up to a subsequence,   to a function $v \le 0$. Since, up to extract a further subsequence, $\frac{\norm{w_n}}{\norm{w_{n+1}}} \to \tau \le 1$, we may pass to the limit to get
\[ \begin{cases}
\I_1^+ v +\mu \tau v =0 &\text{ in } \Omega \\
v =0 &\text{ in } \R^N \setminus \Omega.
\end{cases} \] 
Now since $\I^-_1(-v)+\mu\tau(-v)=0$ in  $\Omega$, by Theorem \ref{max principle} we infer that $v$  in fact vanishes everywhere. This is in contradiction to $\left\|v\right\|_\infty=1$.\\

(ii) We first claim that there exists a nonnegative solution $\overline w\in C^{0,2s-1}(\R^N)$ of 
\begin{equation}\label{2506eq5}
 \begin{cases}
\I_1^+\overline w + \mu  \overline w=-\norm{f}_\infty &\text{ in } \Omega \\
\overline w=0 &\text{ in } \R^N \setminus \Omega.
\end{cases} \end{equation}
As above, we define $w_1=0$ and $w_{n+1}$ be the solution of
\begin{equation*} \begin{cases} 
\I_1^+ w_{n+1} = -\norm{f}_\infty - \mu w_n(x) &\text{ in } \Omega \\
w_{n+1}=0 &\text{ in } \R^N \setminus \Omega.
\end{cases} \end{equation*}
The sequence $(w_n)_n$ is nondecreasing. Using now that $\mu<\mu^+_1$ we also infer that $\sup_n \norm{w_n}_\infty < +\infty$. Then, by Theorem \ref{equiHolder}, $w_n$ converges uniformly in $\R^N$
to a function $\overline w\in C^{0,2s-1}(\R^N)$ which  is solution of \eqref{2506eq5}.

For the general case, let us denote by $\underline w$ the solution of 
\begin{equation*}
 \begin{cases}
\I_1^+\underline w + \mu  \underline w=\norm{f}_\infty &\text{ in } \Omega \\
\underline w=0 &\text{ in } \R^N \setminus \Omega.
\end{cases}
\end{equation*}
obtained in i). Notice that $\underline w\leq0\leq\overline w$.

Now let us define $u_1= \underline{w}$ and let $u_{n+1}$ be the solution of
\[ \begin{cases}
\I_1^+ u_{n+1} = f(x) - \mu u_n &\text{ in } \Omega \\
u_{n+1} =0 &\text{ in } \R^N \setminus \Omega.
\end{cases} \]
We want to show that $\underline{w} \le u_n \le \overline w$. This is true for $n=1$. Let us assume by induction that this holds true at level $n$, and notice that
\[ 
\I_1^+ u_{n+1}\geq- \norm{f}_\infty - \mu \overline{w}=\I_1^+ \overline w\quad\text{in $\Omega$}
 \]
and similarly
\[ \I_1^+ u_{n+1}\leq \norm{f}_\infty - \mu \underline{w}=\I_1^+ \underline w\quad\text{in $\Omega$}.\]
Hence by comparison we have $\underline{w} \le u_{n+1} \le \bar w$.
As a consequence, the sequence $(u_n)_n$ is bounded in $C^{0,2s-1}(\R^N)$ and up to a subsequence it converges uniformly to a function $u\in C^{0,2s-1}(\R^N)$ which is the desired solution.

It remains to show that \eqref{2506eq6} has at most one solution. For this notice that if $u$ and $v$ are respectively sub and supersolution of 
$
\I^+_1u+\mu u=f
$ in $\Omega$, then the difference $w=u-v$ is a viscosity subsolution of 
$$\I^+_1w+\mu w=0\quad\text{in $\Omega$.}$$
This easily follows if at least one between $u$ and $v$ are in $C^2(\Omega)$. Instead, if $u$ and $v$ are merely semicontinuous, then using the doubling variables technique, as  in the proof of Theorem \ref{comparison} with minor changes, we obtain the result.
Hence, if $u_1$ and $u_2$ are solutions of \eqref{2506eq6} then $w=u_1-u_2$ solves 
\begin{equation*}
\begin{cases}
\I^+_1w+\mu w\geq0 & \text{in $\Omega$}\\
w=0  & \text{in $\R^N\backslash\Omega$}.
\end{cases}
\end{equation*}
By Theorem \ref{max principle} we infer that $u_1\leq u_2$. Reversing the role of $u_1$ and $u_2$ we conclude that $u_1=u_2$.
\end{proof}

We are now in position to give the proof of Theorem \ref{existence}.

\begin{proof}[Proof of Theorem \ref{existence}]
In view of Theorem \ref{existencebelow}, for any $n\in\mathbb N$ there exists a solution $w_n\in C^{0,2s-1}(\overline\Omega)$ of  
\[ \begin{cases}
\I_1^+ w_n +(\mu_1^+ -\frac1n ) w_n=-1 &\text{ in } \Omega \\
w_n>0 &\text{ in } \Omega \\
w_n =0 &\text{ in } \R^N \setminus \Omega.
\end{cases} \]
We claim that $\sup_n \norm{w_n}=+\infty$. If not, we can pick $j\in\mathbb N$ such that $j\geq2\sup_n \norm{w_n}$. Hence $w_j$ solves
\[ \begin{cases}
\I_1^+ w_j +(\mu_1^+ +\frac1j ) w_j\leq0 &\text{ in } \Omega \\
w_j>0 &\text{ in } \Omega \\
w_j =0 &\text{ in } \R^N \setminus \Omega.
\end{cases} \]
This contradicts the maximality of $\mu_1^+$, and proves that $\sup_n \norm{w_n} =+\infty$. Up to a subsequence we may assume $\lim_n \norm{w_n}= +\infty$, and we can introduce the functions $z_n=\frac{w_n}{\norm{w_n}}$, which turn out to be solutions of
\[ \begin{cases}
\I_1^+ z_n + \left (\mu_1^+ -\frac1n\right ) z_n = - \frac{1}{\norm{w_n}}&\text{ in } \Omega \\
z_n =0 &\text{ in } \R^N \setminus \Omega.
\end{cases} \]
Using the estimate \eqref{2506eq4}, the sequence $(z_n)_n$ converges uniformly  to a function $\psi_1\in C^{0,2s-1}(\overline\Omega)$ which is solution of \eqref{DirEigen}. Moreover $\psi_1 \geq0$ in $\Omega$ by construction and $\left\|\psi_1 \right\|_\infty=1$. By the strong minimum principle, see Theorem \ref{SMP}-iii), we conclude that $\psi_1>0$ in $\Omega$.
\end{proof}

\end{document}